\newcommand{\nc}{\newcommand}
\nc{\dmo}{\DeclareMathOperator}
\nc{\nt}{\newtheorem}
\newcommand{\cmark}{\ding{51}}
\newcommand{\xmark}{\ding{55}}
\nc{\M}{\mathcal{M}}
\nc{\C}{\mathcal{C}}
\nc{\cut}{\!\ssearrow\!}
\dmo{\Mod}{Mod}
\dmo{\SMod}{SMod}
\dmo{\PMod}{PMod}
\dmo{\AMod}{AMod}
\dmo{\I}{\mathcal{I}}
\dmo{\Sp}{Sp}
\dmo{\SL}{SL}
\dmo{\PSp}{PSp}
\dmo{\PSL}{PSL}
\dmo{\Homeo}{Homeo}
\nc{\Z}{\mathbb Z}
\nc{\N}{\mathcal N}
\nc{\R}{\mathbb R}
\nc{\F}{\mathcal F}
\nc{\ga}{\gamma}
\nc{\de}{\delta}
\nc{\ep}{\epsilon}
\nc{\flm}{\lambda_{2}}
\nc{\normalclosure}[1]{\ensuremath{\left \langle \left \langle #1 \right \rangle \right \rangle}}
\nc{\margin}[1]{\marginpar{\scriptsize #1}}
\title{Normal generators for mapping class groups are abundant}
\author{Justin Lanier}
\author{Dan Margalit}
\address{Justin Lanier \\ School of Mathematics\\ Georgia Institute of Technology \\ 686 Cherry St. \\ Atlanta, GA 30332 \\  jlanier8@gatech.edu}
\address{Dan Margalit \\ School of Mathematics\\ Georgia Institute of Technology \\ 686 Cherry St. \\ Atlanta, GA 30332 \\  margalit@math.gatech.edu}
\begin{document}

\maketitle

\vspace*{-4ex}

\begin{abstract}
We provide a simple criterion for an element of the mapping class group of a closed surface to be a normal generator for the mapping class group.  We apply this to show that every nontrivial periodic mapping class that is not a hyperelliptic involution is a normal generator for the mapping class group when the genus is at least 3.  We also give many examples of pseudo-Anosov normal generators, answering a question of D.~D. Long.  In fact we show that every pseudo-Anosov mapping class with stretch factor less than $\sqrt{2}$ is a normal generator.  Even more, we give pseudo-Anosov normal generators with arbitrarily large stretch factors and arbitrarily large translation lengths on the curve graph, disproving a conjecture of Ivanov.
\end{abstract}


\vspace*{0in}

\section{Introduction}

Let $S_g$ denote a connected, closed, orientable surface of genus $g$. The mapping class group $\Mod(S_g)$ is the group of homotopy classes of orientation-preserving homeomorphisms of $S_g$.  The goal of this paper is to give new examples of elements of $\Mod(S_g)$ that have normal closure equal to the whole group; in this case we say that the element \emph{normally generates} $\Mod(S_g)$.

In the 1960s Lickorish \cite{Lickorish} and Mumford \cite{mumford} proved that $\Mod(S_g)$ is normally generated by a Dehn twist about a nonseparating curve in $S_g$.  On the other hand for $k > 1$ the $k$th power of a Dehn twist is not a normal generator since it acts trivially on the mod $k$ homology of $S_g$.

The Nielsen--Thurston classification theorem for mapping class groups categorizes elements of $\Mod(S_g)$ as either periodic, reducible, or pseudo-Anosov; see \cite[Chapter 13]{Primer}.  Dehn twists and their powers are examples of reducible elements.  Our primary focus in this paper is to find normal generators for $\Mod(S_g)$ among the periodic and pseudo-Anosov elements.  

\subsection*{Periodic elements} By the work of Harvey, Korkmaz, McCarthy, Papadopoulos, and Yoshihara \cite{HK,Korkmaz,McPap,YoshiharaTalk}, there are several specific examples of periodic mapping classes that normally generate $\Mod(S_g)$.  The first author recently showed \cite{Lanier} that for $k \geq 5$ and $g \geq (k-1)^2+1$, there is a mapping class of order $k$ that normally generates $\Mod(S_g)$.  

Our first theorem completely answers the question of which periodic elements normally generate.  In the statement, the hyperelliptic involution is the element (or, conjugacy class) of $\Mod(S_g)$ depicted in Figure~\ref{fig:hi}.

\begin{theorem}
\label{main:periodic}
For $g \geq 3$, every nontrivial periodic mapping class that is not a hyperelliptic involution normally generates $\Mod(S_g)$.
\end{theorem}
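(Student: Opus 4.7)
The plan is to apply the general normal-generation criterion established earlier in the paper. The expected form of the criterion is: if $f \in \Mod(S_g)$ moves some nonseparating simple closed curve $c$ into sufficiently close position relative to itself (for example, $c$ and $f(c)$ are disjoint with $c \cup f(c)$ nonseparating, or cross exactly once), then $\normalclosure{f} = \Mod(S_g)$. With such a criterion in hand, Theorem~\ref{main:periodic} reduces to the geometric task of producing such a curve $c$ for every nontrivial non-hyperelliptic periodic class.

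First, I would invoke Nielsen realization to represent the class $f$ by a finite-order homeomorphism $\phi$ of $S_g$, and consider the quotient orbifold $\mathcal{O} = S_g / \langle \phi \rangle$. The Riemann--Hurwitz formula, combined with $g \geq 3$, restricts the orbifold genus and the number and orders of the cone points of $\mathcal{O}$ to a manageable list. Producing a well-positioned curve on $S_g$ amounts to producing an arc or simple closed curve on $\mathcal{O}$ whose preimage is nonseparating in $S_g$ and lies in favorable position with respect to its image under $\phi$; this is a finite combinatorial problem per orbifold.

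The main work is then a case analysis over this list. Quotients with positive orbifold genus, or with enough cone points, admit an easy construction: an arc between two appropriately chosen cone points, arranged so that $\phi$ rotates its preimage into a disjoint or once-crossing nonseparating curve, will typically satisfy the criterion. I expect the principal obstacle to be the lowest-complexity cases -- cyclic quotients by small-order symmetries whose quotient is a sphere with very few cone points -- where there are few orbits of isotopy classes of curves under $\phi$ and the required $c$ must be built by hand, often using a curve adapted to the rotation axis of $\phi$ or to a specific pair of branch values. The hyperelliptic involution sits at the extreme end of this small-complexity regime, and a key subtlety is confirming that every other low-complexity conjugacy class nevertheless admits a workable curve.

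Finally, the exception in the statement is genuinely forced, which frames the entire case analysis. The image of the hyperelliptic involution $\iota$ in $\Sp(2g,\Z)$ is the central element $-I$, so $\normalclosure{\iota}$ maps into the finite subgroup $\{\pm I\}$ and is a proper normal subgroup of $\Mod(S_g)$. Thus the criterion must fail on the conjugacy class of $\iota$ and nowhere else among nontrivial periodic classes, and the case analysis is complete precisely when it constructs an admissible curve for every other such class.
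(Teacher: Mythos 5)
Your outline correctly identifies the right shape of the argument: realize $f$ by a finite-order homeomorphism $\phi$, pass to the quotient, and try to produce a nonseparating curve $c$ with $c$ and $f(c)$ disjoint-and-nonhomologous or crossing once, so that the well-suited curve criteria (Lemmas~\ref{wscca} and~\ref{wsccb}) apply; you also correctly observe that the hyperelliptic involution is genuinely excluded because it maps to $-I$ under the symplectic representation. But the heart of the proposal -- ``a case analysis over this list'' driven by Riemann--Hurwitz, with the hard cases ``built by hand'' -- is not actually carried out, and this is precisely where the mathematical content of the theorem lives. In particular you flag sphere quotients with few cone points as the obstacle but give no mechanism for producing the required curve there, nor any reason to believe the construction can be made uniform in $g$; Riemann--Hurwitz gives a list that grows without bound as $g$ grows, so a per-orbifold check, even if finite for fixed $g$, does not by itself prove the statement for all $g \geq 3$.

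The paper sidesteps the case analysis entirely. It splits according to whether $\langle \phi \rangle$ acts freely, and if not, whether $\phi$ has order $2$ or order $>2$. Free actions reduce to the classification of cyclic covers, and non-free involutions to the classification of order-$2$ homeomorphisms (Klein/Dugger); in both cases a disjoint non-homologous image curve is visible directly. The crucial remaining case (non-free, order $> 2$) is handled uniformly by Kulkarni's theorem: after passing to a power of $\phi$ with a fixed point, $S_g$ can be presented as a regular $n$-gon with side identifications on which $\phi$ acts as a rotation, and the line segment between midpoints of a pair of identified sides gives a curve $c$ with $i(c,\phi(c)) \leq 1$; a short argument (three disjoint homologous curves cannot all bound a common complementary region) rules out $[c] = [f(c)]$. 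Finding such a power also requires a separate lemma (via Birman--Hilden) to handle roots of the hyperelliptic involution, which have no fixed points but whose squares do -- a subtlety your sketch does not anticipate. So while your framing is sound, the proposal is missing the two ideas (Kulkarni's polygon realization and the Birman--Hilden treatment of hyperelliptic roots) that turn the strategy into a proof.
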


Additionally, we show in Proposition~\ref{etorelli} that for $g \geq 3$ the normal closure of the hyperelliptic involution is the preimage of $\{\pm I\}$ under the standard symplectic representation of $\Mod(S_g)$.  The Torelli group $\I(S_g)$ is the kernel of this representation, so the normal closure of the hyperelliptic involution contains $\I(S_g)$ as a subgroup of index 2.  We also give in Section~\ref{sec:periodic} a complete classification of the normal closures of periodic elements of $\Mod(S_g)$ when $g \leq 2$.  

In Section~\ref{sec:periodic} we give an extension of Theorem~\ref{main:periodic} to surfaces $S_{g,n}$ of genus $g \geq 3$ with $n$ marked points (equivalently, punctures).  Specifically, we show in Theorem~\ref{thm:punc} that the normal closure of every nontrivial periodic mapping class that is not a hyperelliptic involution contains the pure mapping class group $\PMod(S_{g,n})$.  Here, a hyperelliptic involution is a periodic element that maps to a hyperelliptic involution in $\Mod(S_g)$ under the forgetful map $\Mod(S_{g,n}) \to \Mod(S_g)$.  Note that $\PMod(S_{g,1})$ is equal to the full mapping class group $\Mod(S_{g,1})$.  

Several well-known facts about mapping class groups follow from Theorem~\ref{main:periodic}.   For instance, the\ Torelli group $\mathcal{I}(S_g)$ is torsion free.  Also, the level $m$ congruence subgroup $\Mod(S_g)[m]$, the kernel of the action of $\Mod(S_g)$ on $H_1(S_g;\Z/m\Z)$, is torsion free for $m \geq 3$.  Additionally, it follows that there are no finite nontrivial normal subgroups of $\Mod(S_g)$ except for the cyclic subgroups of $\Mod(S_1)$ and $\Mod(S_2)$ generated by the hyperelliptic involution.  One new consequence is that any normal subgroup of $\Mod(S_g)$ not containing $\I(S_g)$ is torsion free.

Another consequence of Theorem~\ref{main:periodic} involves the subgroup of $\Mod(S_g)$ generated by $n$th powers of all elements.  Let $L$ be the least common multiple of the orders of the periodic elements of $\Mod(S_g)$.  For $n$ not divisible by $L/2$ we show that the $n$th power subgroup of $\Mod(S_g)$ is equal to the whole group $\Mod(S_g)$; see Corollary~\ref{cor:funar}.  This improves on a result of Funar \cite{funar}, who proved the analogous result with $L/2$ replaced by $4g+2$.  Funar's theorem answers in the negative a question of Ivanov, who asked in his problem list \cite[Problem 13]{nvi15} if the $n$th power subgroup of $\Mod(S_g)$ has infinite index for $n$ sufficiently large.

\begin{figure}[h!]
\centering
\includegraphics[scale=.5]{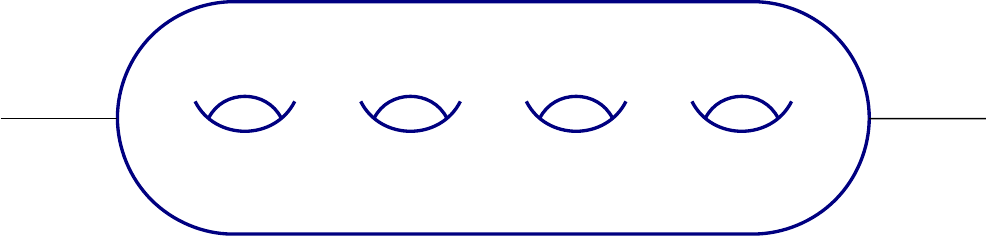}
\caption{Rotation by $\pi$ about the indicated axis is a hyperelliptic involution}
\label{fig:hi}
\end{figure}

It follows from the fact that $\Mod(S_g)$ has a periodic normal generator that any homomorphism from $\Mod(S_g)$ to a torsion-free group is trivial.  Theorem~\ref{main:periodic} gives a strengthening: any homomorphism from $\Mod(S_g)$ to a group without the ``right'' torsion must be trivial.  Theorem~\ref{main:periodic}, its extension Theorem~\ref{thm:punc} for punctured surfaces, and the surrounding ideas have already been leveraged in this way to prove several results about homomorphisms of mapping class groups:
\begin{enumerate}
\item Mann--Wolf showed that for $g \geq 3$ any homomorphism $\Mod(S_{g,1}) \to \textrm{Homeo}^+(S^1)$ is either trivial or equivalent to the standard Gromov boundary action \cite{mann}.
\item Chen and the first author proved that for $g \geq 3$ and $h < 2g-1$ with $h \neq g$, any homomorphism $\Mod(S_g) \to \Mod(S_h)$ is trivial \cite{chenlanier}.
\item Chen, Kordek, and the second author showed that any homomorphism from the braid group $B_n$ to the braid group $B_{2n}$ is either cyclic or is equivalent to one of the standard inclusions \cite{ckm}.
\end{enumerate}
The first result answers a special case of a question of Farb \cite[Questions 6.2]{FarbProblems}.  The second result  extends a special case of a result of Aramayona--Souto; they proved an analogous statement for surfaces of genus $g \geq 6$, possibly with punctures or boundary \cite[Theorem 1.1]{AS}.

The analogue of Theorem~\ref{main:periodic} for non-orientable surfaces was recently proved by Le\'sniak \cite{lesniak}.


\subsection*{Pseudo-Anosov elements with small stretch factor} Having addressed the periodic elements, we turn to the case of pseudo-Anosov mapping classes.  In a 1986 paper \cite{Long}, Darren Long asked: 
\begin{quote}
\emph{Can the normal closure of a pseudo-Anosov map ever be all of $\Mod(S_g)$?}
\end{quote}
Long answered the question in the affirmative for $g = 1$.  In Proposition~\ref{prop:long} below we give a flexible construction that gives many pseudo-Anosov normal generators for each $g \geq 1$, thus answering Long's question.

Penner constructed a family of pseudo-Anosov mapping classes, one for each $g$, with the property that the stretch factors tend to 1 as $g$ tends to infinity \cite{Penner}.  We show in Proposition~\ref{prop:penner} that each of these small stretch factor pseudo-Anosov mapping classes is a normal generator.

Our second main theorem shows that in fact every pseudo-Anosov mapping class with sufficiently small stretch factor is a normal generator.

\begin{theorem}
\label{main:pa}
\label{pA}
If a pseudo-Anosov element of $\Mod(S_g)$ has stretch factor less than $\sqrt{2}$ then it normally generates $\Mod(S_g)$.  
\end{theorem}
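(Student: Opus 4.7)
The plan hinges on the general normal-generation criterion proved earlier in the paper, which asserts that a nontrivial $f \in \Mod(S_g)$ normally generates as soon as it admits a simple closed curve $c$ such that $c$ and $f(c)$ sit in a suitably constrained topological configuration --- concretely, that $i(c,f(c))$ is small enough, or equivalently that $c \cup f(c)$ fills a subsurface of small topological type. Granting this criterion, the real work of Theorem~\ref{main:pa} is to produce such a curve from the data of a pseudo-Anosov $f$ with stretch factor $\lambda < \sqrt{2}$.

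To construct $c$ I would pass to the invariant singular flat structure associated with $f$, together with its stable and unstable measured foliations $\mathcal F^s$ and $\mathcal F^u$, whose transverse measures are rescaled by $\lambda^{-1}$ and $\lambda$ respectively under $f$. For a simple closed geodesic $c$ with horizontal and vertical lengths $x = i(c, \mathcal F^s)$ and $y = i(c, \mathcal F^u)$, the standard $L^1$-type intersection estimate gives
\[
i(c,f(c)) \;\le\; i(c,\mathcal F^s)\,i(f(c),\mathcal F^u) + i(c,\mathcal F^u)\,i(f(c),\mathcal F^s) \;=\; (\lambda + \lambda^{-1})\, x y.
\]
So the problem reduces to producing a simple closed flat geodesic $c$ whose length product $xy$ is small enough to drop $i(c,f(c))$ below the tolerance permitted by the criterion.

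A systole-type / Mahler compactness argument on the flat surface furnishes a closed geodesic whose total length $x+y$ is bounded above in terms of the area and the topology of $S_g$, and the AM--GM inequality then controls $xy$. The threshold $\lambda < \sqrt{2}$ arises as exactly the numerical regime in which the resulting upper bound on $(\lambda+\lambda^{-1})\, xy$ lands inside the tolerance admitted by the criterion; for larger $\lambda$ the coefficient $\lambda + \lambda^{-1}$ spoils the estimate and an entirely different mechanism would be needed.

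The main obstacle, and the real technical content, will be turning this short-curve construction into a curve that the criterion actually accepts: the minimizer must be a simple closed curve (and not merely a saddle connection), and its topological type --- for instance, whether it is nonseparating, and how $c \cup f(c)$ fills a subsurface --- must match any further hypotheses built into the criterion. It may also be necessary to first deform inside the Teichm\"uller disk of $f$ to a balanced point at which the horizontal and vertical contributions are comparable, so that the AM--GM step is tight; controlling this degeneration to unbalanced flat structures appears to be the delicate technical heart of the argument.
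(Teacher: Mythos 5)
Your proposal correctly locates the opening move---produce a simple closed curve $c$ with $i(c,f(c))$ small using the invariant flat structure---but this is precisely Proposition~\ref{flm}, which the paper simply quotes from Farb--Leininger--Margalit rather than reproving. Your intersection estimate $i(c,f(c)) \leq (\lambda+\lambda^{-1})\,xy$ and the systole/AM--GM idea for bounding $xy$ are in the same spirit as that citation, and the specific threshold $\sqrt{2}$ enters because $\sqrt{2} < \sqrt{3/2}$ forces $i(c,f(c)) \leq 2$. So far so good. But you have misidentified the technical heart of the theorem, and there is a genuine gap in the next step.

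The gap is in the sentence asserting that there is a criterion under which $f$ normally generates ``as soon as $i(c,f(c))$ is small enough.'' No such criterion exists, and none is proved in the paper. The well-suited curve criteria for nonseparating curves all carry homological hypotheses: Lemma~\ref{wscca} needs $i(c,f(c))=1$, Lemma~\ref{wsccb} needs $i(c,f(c))=0$ and $[c]\neq[f(c)]$, and Lemma~\ref{lemma:nonsep lantern} needs $i(c,f(c))=2$ and $[c]\neq[f(c)]\bmod 2$. When $c$ is nonseparating and $[c]=[f(c)]\bmod 2$---which is the generic situation and in particular holds for \emph{every} curve $c$ whenever $f$ lies in the level-2 congruence subgroup $\Mod(S_g)[2]$, a proper normal subgroup---small $i(c,f(c))$ alone cannot certify normal generation, and no argument involving only $c$ and $f(c)$ can possibly work. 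Your proposal never confronts this case, and the concerns you flag as the ``delicate technical heart'' (whether the minimizer is a saddle connection, deforming to a balanced point in the Teichm\"uller disk) are not where the difficulty lives: the paper avoids them entirely by citing the FLM proposition.

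The actual work in the paper is what happens after you have $i(c,f(c))\leq 2$. In the hard case $[c]=[f(c)]\bmod 2$ one must bring in the iterates $f^2(c)$ and sometimes $f^3(c)$: the same small-stretch hypothesis, applied to $f^2$ (since $\sqrt{2}\leq\sqrt{4/2}$) and to $f^3$ (since $\sqrt{2}<\sqrt[3]{3}$), gives $i(c,f^2(c))\leq 2$ and $i(c,f^3(c))\leq 4$. This reduces the problem to classifying all configurations of a triple $(c,f(c),f^2(c))$ of mutually mod-2-homologous curves with pairwise intersection at most $2$ --- a finite but substantial enumeration (36 minimal configurations across types I--IV, Propositions~\ref{prop:I}--\ref{prop:IV}) --- and then, configuration by configuration, exhibiting either a good pair of curves in the sense of Lemma~\ref{good pair}, or a separating curve $d$ with $i(d,f(d))\leq 2$, or an auxiliary curve satisfying one of the earlier criteria. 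None of this is suggested by your outline. To repair the proposal you would need to (a) drop the false general criterion, (b) split on whether $[c]=[f(c)]\bmod 2$, and (c) in the difficult case supply the triple/quadruple analysis; the latter is where essentially all the effort in the paper's proof is concentrated.
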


For each $g \geq 3$ there are pseudo-Anosov elements of $\Mod(S_g)$ that satisfy the hypothesis of Theorem~\ref{main:pa}.  Indeed, for $g \geq 4$ we may use the fact that there are pseudo-Anosov mapping classes with stretch factor less than $\phi^{2/(g-1)}$ (see \cite[Proposition A.1]{ALM}) and for $g=3$ we may appeal to the example given by Hironaka \cite[Theorem 1.5]{eko}; see Table 1 in her paper.  On the other hand, for $g \leq 2$ it is known that there are no pseudo-Anosov mapping classes that satisfy the hypothesis of Theorem~\ref{main:pa}.  Indeed in these cases the smallest stretch factors are known and they are greater than $\sqrt{2}$; for $g=1$ this is classical and for $g=2$ this is due to Cho and Ham \cite{CH}.

We can make precise one sense in which the mapping classes with stretch factor less than $\sqrt{2}$ are abundant.  A theorem of Leininger and the second author of this paper \cite[Theorem 1.3]{LM} has the following consequence: for any $d$ there is a polynomial $Q(g)$ with degree $d$ and with positive leading term so that the number of pseudo-Anosov elements of $\Mod(S_g)$ with stretch factor less than $\sqrt{2}$ is bounded below by $Q(g)$ for $g \gg 0$.  

Another way to state Theorem~\ref{main:pa} is:
\begin{quote}
\emph{
If a pseudo-Anosov mapping class lies in any proper normal subgroup of $\Mod(S_g)$, its stretch factor is greater than $\sqrt{2}$.
}
\end{quote}
As such, Theorem~\ref{main:pa} generalizes work of Agol, Farb, Leininger, and the second author of this paper.  Farb, Leininger, and the second author proved that if a pseudo-Anosov mapping class is contained in $\Mod(S_g)[m]$ with $m \geq 3$ then the stretch factor is greater than $1.218$ \cite[Theorem 1.7]{FLM}.  Agol, Leininger, and the second author proved that if a pseudo-Anosov mapping class is contained in $\Mod(S_g)[2]$ then the stretch factor is greater than $1.00031$ \cite[Theorem 1.1]{ALM}.  Our Theorem~\ref{main:pa} improves upon these results in two ways: from congruence subgroups to arbitrary normal subgroups and from 1.218 and 1.00031 to $\sqrt{2}$.  

With regard to the first improvement, there do indeed exist examples of normal subgroups that are not contained in congruence subgroups (and hence not covered by the theorems of Agol, Farb, Leininger, and the second author).  Such examples were recently constructed by Clay, Mangahas, and the second author \cite{cmm}, solving a problem posed in an earlier version of this paper \cite[Problem 1.5]{lanierm}.

With regard to the second improvement, we note that when using the bound $\phi^{2/(g-1)}$, we only obtain examples of pseudo-Anosov mapping classes with stretch factor less than 1.00031 when $g$ is at least 3,106.

The bound $\sqrt{2}$ in the statement of Theorem~\ref{main:pa} comes from the following considerations.  We show in Section~\ref{sec:int} that if the stretch factor of $f$ is bounded above by $\sqrt{2}$ then there is a curve $c$ so that the pairwise geometric intersection numbers of $c$, $f(c)$, and $f^2(c)$ are bounded above by 2.  In other words, the constraint on the stretch factor of $f$ is converted into a constraint on the action of $f$ on curves in $S_g$.  The proof of Theorem~\ref{main:pa} proceeds by classifying all possible triples of curves $(c,f(c),f^2(c))$ with pairwise geometric intersection number at most 2 and treating each possibility on a case-by-case basis.  In order to relax the bound $\sqrt{2}$ to a slightly larger number, our approach would require a consideration of triples of curves $(c,f(c),f^2(c))$ with pairwise geometric intersection number at most $n$, where $n > 2$.  Even for $n=3$, a classification of such triples would be daunting.  And even if this were accomplished, it is possible that there is a pseudo-Anosov $f$ that is not a normal generator and a curve $c$ so that $c$, $f(c)$, and $f^2(c)$ have pairwise intersection at most 3.

That said, it is natural to ask how sharp the bound $\sqrt{2}$ is in Theorem~\ref{main:pa}.  In other words, what is the infimum of the stretch factors of all pseudo-Anosov mapping classes lying in any proper normal subgroup of any $\Mod(S_g)$?  Farb, Leininger, and the second author \cite{FLM} showed that for each $g \geq 2$ there are pseudo-Anosov elements of $\I(S_g)$ with stretch factor at most 62.  Thus the infimum lies between $\sqrt{2}$ and 62.  On the other hand, Farb, Leininger, and the second author proved that for the $k$th term of the Johnson filtration of $\Mod(S_g)$ the smallest stretch factor tends to infinity as $k$ does (independently of $g$), so this gives a sequence of normal subgroups for which the bound $\sqrt{2}$ of Theorem~\ref{main:pa} becomes decreasingly sharp.  It is an interesting problem, already raised by Farb, Leininger, and the second author \cite{FLM}, to understand the smallest stretch factors in various specific normal subgroups, such as the level 2 congruence subgroup $\Mod(S_g)[2]$.

\subsection*{Pseudo-Anosov elements with large stretch factor} Having given many examples of pseudo-Anosov normal generators with small stretch factor, we turn to the question of what other kinds of pseudo-Anosov normal generators may exist.  In this vein, Ivanov \cite[Problem 13]{nvi15} made the following conjecture in his 2006 problems paper: 
\begin{quote}
\emph{{\bf Conjecture.}  If $f$ is a pseudo-Anosov element of a mapping class group $\Mod(S)$ with sufficiently large dilatation coefficient, then the subgroup of $\Mod(S)$ normally generated by $f$ is a free group having as generators the conjugates of $f$. More cautiously, one may conjecture that the above holds for a sufficiently high power $g = f^N$ of a given pseudo-Anosov element $f$.}
\end{quote}
(The term ``dilatation coefficient'' is interchangeable with the term ``stretch factor.'')  In Proposition~\ref{prop:arbi} below we give for each $g \geq 1$ a flexible construction of pseudo-Anosov normal generators for $\Mod(S_g)$ with arbitrarily large stretch factor.  Since $\Mod(S_g)$ is not a free group, this in particular disproves the first part of Ivanov's conjecture.

Much more than this, we have the following theorem.  In the statement, note that $\Mod(S_g)[m]$ is $\Mod(S_g)$ when $m=1$ and it is $\I(S_g)$ when $m=0$.

\begin{theorem}
\label{level}
Let $g \geq 3$.  For each $m \geq 0$ there are pseudo-Anosov mapping classes with arbitrarily large stretch factors whose normal closures in $\Mod(S_g)$ are equal to $\Mod(S_g)[m]$.
\end{theorem}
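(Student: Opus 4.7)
The plan splits into three cases by the value of $m$, all leaning on the normal generation criterion developed earlier in the paper.

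The case $m = 1$ is exactly Proposition \ref{prop:arbi}, which already supplies a flexible family of pseudo-Anosov normal generators of $\Mod(S_g)$ with arbitrarily large stretch factor. So nothing further is needed there.

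For $m \geq 2$, the quotient $\Mod(S_g)/\Mod(S_g)[m] \cong \Sp(2g, \Z/m\Z)$ is a finite group; let $E$ denote its exponent. Given a pseudo-Anosov $f$ from Proposition \ref{prop:arbi} with $\normalclosure{f} = \Mod(S_g)$ and stretch factor $\lambda$, the power $f^E$ lies in $\Mod(S_g)[m]$, is pseudo-Anosov, and has stretch factor $\lambda^E$, which can be made arbitrarily large by choosing $\lambda$ large. The inclusion $\normalclosure{f^E} \subseteq \Mod(S_g)[m]$ is immediate. For the reverse inclusion, I would appeal to a description of $\Mod(S_g)[m]$ by specific normal generators inside $\Mod(S_g)$ (for instance, $m$-th powers of Dehn twists about nonseparating curves, extending Humphries's theorem for $m=2$) and apply the normal generation criterion of the paper to $f^E$ to show that $\normalclosure{f^E}$ contains such generators.

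For $m = 0$, the quotient $\Mod(S_g)/\I(S_g) \cong \Sp(2g,\Z)$ is infinite, so no power of a pseudo-Anosov is automatically forced into $\I(S_g)$. Instead, I would construct a pseudo-Anosov $\phi$ directly inside $\I(S_g)$ with arbitrarily large stretch factor, for example via a Thurston-style construction using two filling multicurves whose associated multi-twists both lie in $\I(S_g)$, such as pairs of homologous curves. For $g \geq 3$, Johnson's theorem says that $\I(S_g)$ is generated by genus-one bounding pair maps, and since all such maps form a single $\Mod(S_g)$-conjugacy class, any one of them normally generates $\I(S_g)$ in $\Mod(S_g)$. Applying the normal generation criterion to $\phi$ should then show that $\normalclosure{\phi}$ contains a genus-one bounding pair map, and hence all of $\I(S_g)$.

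The main obstacle, shared by the cases $m = 0$ and $m \geq 2$, is the passage from ``$\phi$ lies in $\Mod(S_g)[m]$'' to ``$\phi$ normally generates $\Mod(S_g)[m]$''. This is precisely where the normal generation criterion does the essential work, combined with the identification of a convenient set of normal generators for each $\Mod(S_g)[m]$ in $\Mod(S_g)$; both steps need to be carried out uniformly in the stretch factor so that the ``arbitrarily large'' conclusion is preserved.
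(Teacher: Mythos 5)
Your handling of $m=1$ via Proposition~\ref{prop:arbi} is fine, but for $m \neq 1$ there is a genuine gap that cannot be patched by the tools you invoke. You correctly identify the crux --- passing from ``$\phi \in \Mod(S_g)[m]$'' to ``$\normalclosure{\phi} = \Mod(S_g)[m]$'' --- but the proposed resolution, ``apply the normal generation criterion of the paper to $f^E$ (resp.\ $\phi$),'' cannot succeed even in principle. Every well-suited curve criterion in the paper (Lemmas~\ref{wscca}, \ref{wsccb}, \ref{wsccsep}, \ref{chen}, \ref{lemma:nonsep lantern}, \ref{good pair}, Proposition~\ref{prop:wscc}) concludes that the normal closure contains the commutator subgroup of $\Mod(S_g)$, hence equals $\Mod(S_g)$ when $g \geq 3$. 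If any of these criteria were satisfied by $f^E$ or $\phi$, it would contradict the membership $f^E \in \Mod(S_g)[m]$ (resp.\ $\phi \in \I(S_g)$), since these are proper normal subgroups. So the criteria are vacuously inapplicable exactly where you need them. Separately, for $m \geq 2$ there is no reason $\normalclosure{f^E}$ should fill up $\Mod(S_g)[m]$: by Dahmani--Guirardel--Osin \cite{DGO}, sufficiently high powers of pseudo-Anosovs have free, hence much smaller, normal closures, so the exponent $E$ of $\Sp(2g,\Z/m\Z)$ gives you membership in $\Mod(S_g)[m]$ but no lower bound on the normal closure at all.

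The paper's proof instead engineers an explicit family $f_{m,k}$ rather than taking powers of a fixed normal generator. The construction is tuned so that two things hold simultaneously and can be read off directly. First, there is a nonseparating curve $c$ for which $c$ and $f_{m,k}(c)$ form a genus-one bounding pair; the commutator $[T_c, f_{m,k}] = T_c T_{f_{m,k}(c)}^{-1}$ is then a bounding pair map, which lies in $\normalclosure{f_{m,k}}$ and by Johnson's theorem normally generates $\I(S_g)$ --- this is the step you correctly anticipate but cannot produce from a generic $\phi \in \I(S_g)$. Second, the Dehn twists in $f_{m,k}$ other than $T_d^m$ are all about separating curves and hence lie in $\I(S_g)$, so the symplectic image $\Psi(f_{m,k})$ equals $\Psi(T_d^m)$, the $m$th power of a transvection about a primitive class; Mennicke's theorem then identifies the normal closure of that image in $\Sp_{2g}(\Z)$ as $\Sp_{2g}(\Z)[m]$. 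Combining, $\normalclosure{f_{m,k}} = \Mod(S_g)[m]$ exactly. Finally, the stretch factor is pushed to infinity by the auxiliary exponent $k$, using Lemma~\ref{k}. The essential idea your sketch is missing is that you must \emph{build} the mapping class so that a bounding pair map visibly appears as a commutator and so that the symplectic image is visibly a transvection power; neither feature is available for an arbitrary pseudo-Anosov in $\Mod(S_g)[m]$ or for a high power of a normal generator.
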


It is natural to ask if other normal subgroups, such as the Johnson kernel and the other terms of the Johnson filtration, can be obtained as the normal closure in $\Mod(S_g)$ of a single pseudo-Anosov mapping class.  In fact it is an open question whether any of these groups can be obtained as the normal closure in $\Mod(S_g)$ of a single element. 

The second part of Ivanov's conjecture is (perhaps intentionally) ambiguous: the word ``a'' can be interpreted as either ``some'' or ``any.''  Dahmani, Guirardel, and Osin proved that every pseudo-Anosov mapping class has a large power (depending only on $g$) whose normal closure is an all pseudo-Anosov infinitely generated free group \cite{DGO}.  This confirms the second part of Ivanov's conjecture with the ``some'' interpretation.  Their theorem also answers another question in Ivanov's problem list \cite[Problem 3]{nvi15}, which asks if there are any normal, all pseudo-Anosov subgroups of $\Mod(S_g)$.

Our next theorem disproves the second part of Ivanov's conjecture with the ``any'' interpretation.  In the statement, the curve graph $\C(S_g)$ is the graph whose vertices are homotopy classes of simple closed curves in $S_g$ and whose edges are pairs of vertices with disjoint representatives in $S_g$.  Masur and Minsky proved that the asymptotic translation length for a pseudo-Anosov element of $\Mod(S_g)$ is a positive real number \cite[Proposition 4.6]{MM}.

\begin{theorem}
\label{translation}
For each $g \geq 3$ there are pseudo-Anosov mapping classes with the property that all of their odd powers normally generate $\Mod(S_g)$.  Consequently, there are pseudo-Anosov mapping classes with arbitrarily large asymptotic translation lengths on $\C(S_g)$ that normally generate $\Mod(S_g)$. 
\end{theorem}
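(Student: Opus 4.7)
The plan is to leverage Theorem~\ref{main:periodic}: for $g \geq 3$, every non-hyperelliptic involution is a normal generator of $\Mod(S_g)$. Thus it suffices to exhibit a pseudo-Anosov $f$ together with a non-HI involution $\iota$ such that $\iota \in \normalclosure{f^k}$ for every odd $k \geq 1$. Granting this first sentence of the theorem, the second follows at once: the mapping classes $f^{2n+1}$ are pseudo-Anosov normal generators of $\Mod(S_g)$ whose asymptotic translation length on $\C(S_g)$ is $(2n+1)\,\ell_{\C}(f)$, which is unbounded in $n$.

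To force $\iota$ into $\normalclosure{f^k}$ uniformly in odd $k$, I would look for a triple $(f,\iota,\psi)$ in $\Mod(S_g)$ with the following three properties: $\iota$ is a non-HI involution, $f$ is pseudo-Anosov and commutes with $\iota$, and $\psi$ satisfies $\psi f \psi^{-1} = \iota f$. Given such a triple, $[\iota,f]=1$ and $\iota^k = \iota$ for odd $k$ yield
\[
\psi f^k \psi^{-1} = (\iota f)^k = \iota^k f^k = \iota f^k,
\]
so $\iota = (\psi f^k \psi^{-1})\, f^{-k}$ lies in $\normalclosure{f^k}$. The two claims of the theorem then follow exactly as outlined above.

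The hard step will be the construction of such a triple, and this is where I expect the essential work of the proof to lie. I would approach it via an equivariant construction on the orbifold cover $S_g \to S_g/\iota$. Fix a non-HI involution $\iota$ of $S_g$ (available for every $g \geq 3$, either free or with fixed points), build a pseudo-Anosov $\bar f$ on the quotient orbifold by a Thurston/Penner-type construction from a pair of filling multicurves, and let $f \in \Mod(S_g)$ be a lift. Then $f$ is pseudo-Anosov and commutes with $\iota$, and the two lifts of $\bar f$ are precisely $f$ and $\iota f$. To produce $\psi$, I would arrange the construction so that $S_g/\iota$ carries a further symmetry $\bar\sigma$ preserving $\bar f$ up to conjugacy, and then select the lift of $\bar\sigma$ that sends the distinguished lift $f$ to the other lift $\iota f$ rather than to $f$ itself.

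The main obstacle I anticipate is guaranteeing the existence of this \emph{coset-swapping} lift: it amounts to realizing the nontrivial value of a $\Z/2$-valued obstruction recording which of the two preimages a lift of $\bar\sigma$ sends $f$ to. I would try to resolve this by exploiting the freedom in the choice of $\bar f$ and $\bar\sigma$ on the quotient, together with a direct $\Z/2$-cohomological bookkeeping of lifts under the double cover $S_g \to S_g/\iota$. Once the triple $(f,\iota,\psi)$ is in hand, the group-theoretic manipulation above finishes the first assertion, and the translation-length corollary is immediate from the linear growth of $\ell_{\C}$ under iteration of a pseudo-Anosov.
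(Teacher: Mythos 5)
Your reduction of the second claim to the first is correct and is identical to the paper's: asymptotic translation length is homogeneous under powers, so $\ell_{\C}(f^{2n+1}) = (2n+1)\ell_{\C}(f) \to \infty$. Your group-theoretic identity is also correct: if $\iota^2 = 1$, $[\iota, f] = 1$, and $\psi f \psi^{-1} = \iota f$, then for odd $k$ one has $\psi f^k \psi^{-1} = \iota^k f^k = \iota f^k$, so $\iota = (\psi f^k \psi^{-1}) f^{-k}$ lies in $\normalclosure{f^k}$, and Theorem~\ref{main:periodic} finishes the argument. So the overall architecture is sound.

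The genuine gap is that the triple $(f, \iota, \psi)$ is never produced. You correctly identify the obstruction — that the lift $\psi$ of $\bar\sigma$ must swap the two preimages $\{f, \iota f\}$ of $\bar f$ rather than fix them — but leave its resolution as a sketch. This $\Z/2$-valued obstruction is real: for a generic choice of $\bar f$ and $\bar\sigma$ there is no reason for the swapping lift to exist, and the ``freedom in the choice of $\bar f$ and $\bar\sigma$'' and ``cohomological bookkeeping'' you allude to would need to be carried out explicitly for the proof to be complete. As written, the hard step is left open.

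The paper's route is genuinely different and sidesteps this obstruction. Instead of a $\Z/2$ cover and an involution commuting with $f$, it uses the dihedral group $D_{2g}$ acting on $S_g$ with quotient a sphere with five orbifold points. Lifting a pseudo-Anosov of the quotient gives a pseudo-Anosov $\psi$ normalizing $D_{2g}$; since $r$ (the order-$g$ rotation) and $r^{-1}$ are the only elements of $D_{2g}$ to which $\psi$ could conjugate $r$, one can always replace $\psi$ by $k\psi$ for a reflection $k$ to force $f r f^{-1} = r^{-1}$, at no cost to the pseudo-Anosov property. Then $[r, f^n] = r^2 \in \normalclosure{f^n}$ for odd $n$, and $r^2$ normally generates by Lemma~\ref{wsccb}. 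Note the structural difference: your construction needs two auxiliary elements ($\iota$ commuting with $f$, and $\psi$ doing the coset swap), while the paper's needs a single element $r$ \emph{inverted} by $f$ (so $r$ does not commute with $f$); the larger dihedral symmetry group makes the needed conjugation relation automatic, with no lifting obstruction to compute. If you want to rescue your two-element approach, you would have to actually realize the nontrivial value of the swapping obstruction in a concrete example — this is not an unreasonable goal, but it is precisely the work the dihedral construction lets one avoid.
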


The pseudo-Anosov mapping classes used to prove Theorem~\ref{translation} are not generic, as their invariant foliations have nontrivial symmetry groups (cf. \cite{masai}).   Clay, Mangahas, and the second author proved that if a pseudo-Anosov mapping class has invariant foliations without symmetries then the normal closure of any sufficiently large power is a free group of infinite rank \cite{cmm}.  Their result confirms a suspicion raised in an earlier version of this paper \cite{lanierm}.  In a similar direction, Maher--Tiozzo \cite{mt} further proved that the typical mapping class (in the sense of random walks) has normal closure isomorphic to an infinitely generated free group (Maher had previously proved that the typical mapping class in this sense in pseudo-Anosov \cite{maher}).  We are also led to ask: if the asymptotic translation length of a pseudo-Anosov mapping class is large, can its normal closure be anything other than the mapping class group, a free group, or perhaps the extended Torelli group?

\subsection*{Moduli spaces} The group $\Mod(S_g)$ can be identified with the orbifold fundamental group of $\M_g$, the moduli space of Riemann surfaces of genus $g$.  This means that Theorems~\ref{main:periodic} and~\ref{main:pa} can both be recast in terms of the geometry of normal covers of $\M_g$.

Periodic elements of $\Mod(S_g)$ correspond to orbifold points in $\M_g$ and so Theorem~\ref{main:periodic} can be interpreted as saying that the only orbifold points in a proper normal cover of $\M_g$ are those arising from the hyperelliptic involution.  In particular, these orbifold points all have order 2 and lie along the hyperelliptic locus.   

Torelli space is the normal cover of $\M_g$ corresponding to the Torelli group.  This space can be described as the space of Riemann surfaces with homology framings.  A further consequence of Theorem~\ref{main:periodic} is that every normal cover of $\M_g$ not covered by Torelli space is a manifold.  In fact a normal cover of $\M_g$ is a manifold if and only if it is not covered by the quotient of Torelli space given by the action of $-I \in \Sp_{2g}(\Z)$.

If we endow $\M_g$ with the Teichm\"uller metric, then pseudo-Anosov elements of $\Mod(S_g)$ correspond exactly to geodesic loops in $\M_g$.  The length of a geodesic loop is the logarithm of the corresponding stretch factor.   Theorem~\ref{main:pa} can thus be interpreted as saying that geodesic loops in $\M_g$ whose lengths are less than $\log \sqrt{2}$ do not lift to loops in any proper normal cover of $\M_g$.  The existence of pseudo-Anosov normal generators for $\Mod(S_g)$ with arbitrarily large stretch factor, shown in Proposition~\ref{prop:arbi}, implies the existence of arbitrarily long geodesic loops in $\M_g$ that do not lift to loops in any proper normal cover.  

\subsection*{The well-suited curve criterion} All of the results about normal generators in this paper are derived from a simple, general principle for determining when a mapping class $f$ normally generates the mapping class group.  We call this principle the well-suited curve criterion.  The principle is that if we can find a curve $c$ so that the configuration $c \cup f(c)$ is ``simple'' enough, then $f$ normally generates the mapping class group.  We give many concrete manifestations of this principle in the paper, namely, as Lemmas~\ref{wscca}, \ref{wsccb}, \ref{wsccsep}, \ref{chen}, \ref{lemma:nonsep lantern}, \ref{good pair}, \ref{gpbp}, and \ref{gp} and Proposition~\ref{prop:wscc}.

Our first example of the well-suited curve criterion, Lemma~\ref{wscca} below, takes the following form for $g \geq 3$:
\begin{quote}
\emph{If $f$ lies in $\Mod(S_g)$ with $g \geq 3$ and if $c$ is a nonseparating curve in $S_g$ with $i(c,f(c)) = 1$ then $f$ is a normal generator for $\Mod(S_g)$.}
\end{quote}
The well-suited curve criterion is a very general and widely applicable principle, and there are many variations besides the ones introduced in this paper.  We expect that this principle can be leveraged to address other problems about mapping class groups and related groups.  For example, we use a variant to prove our result about normal generators for congruence subgroups, Theorem~\ref{level} below.  We also use the principle to give normal generators for certain linear groups.

While having a simple configuration for $c$ and $f(c)$ is a powerful sufficient criterion for a mapping class to be a normal generator, it is not a necessary condition.  This point is underscored by Theorem~\ref{translation}, which gives examples of normal generators with large translation length on the curve graph $\C(S_g)$.  For these examples, the distance between every curve and its image is large, which means that each curve forms a complicated configuration with its image.  Still there is a way to formulate the well-suited curve criterion as a necessary, as well as sufficient, condition for a mapping class to be a normal generator.  We state in Proposition~\ref{prop:wscc} below, the most general version of our well-suited curve criterion, which says that a mapping class is a normal generator if and only if a certain associated curve graph is connected.

Subsequent to our work, a version of the well-suited curve criterion for the braid group was given by Chen, Kordek, and the second author of this paper.  They used the criterion to show that the normal closure of any nontrivial periodic element contains the commutator subgroup of the braid group \cite[Lemma 4.2]{ckm}.  Also, Baik--Kin--Shin--Wu \cite{bksw} applied the well-suited curve criterion to show that normal generators are abundant among the monodromies associated to a hyperbolic fibered 3-manifold with first Betti number equal to 2.

\subsection*{Overview of the paper} The remainder of the paper is divided into two parts.  In the first part, Sections~\ref{sec:criterion}--\ref{sec:other}, we give special cases of the well-suited curve criterion and then use them to prove all of our main results aside from Theorem~\ref{main:pa}.  In the second part, Sections~\ref{sec:int}--\ref{sec:pA}, we prove Theorem~\ref{main:pa}, which is by far our most technical result.

We begin the first part of the paper by proving in Section~\ref{sec:criterion} several special cases of the well-suited curve criterion.   In Section~\ref{sec:periodic}, we use the special cases of the criterion to prove our Theorem~\ref{main:periodic} about periodic elements and to give our extension of Funar's theorem.   In Section~\ref{sec:long} we again use the special cases of the criterion  to answer Long's question in the affirmative, to disprove Ivanov's conjecture, and to prove Theorems~\ref{level} and~\ref{translation}.   In Section~\ref{sec:other} we apply our results about mapping class groups to give normal generators for certain linear groups.  

The second part of the paper begins with Section~\ref{sec:int}, in which we relate small stretch factor to geometric intersection numbers for curves and lay out the plan for the proof of Theorem~\ref{main:pa}.  In Section~\ref{sec:general} we state and prove the general well-suited curve criterion, Proposition~\ref{prop:wscc}, and use it to prove one case of Theorem~\ref{main:pa}.  Then in Section~\ref{sec:configs} we address the main technical obstacle of the paper, by giving a detailed analysis of all possible configurations of certain triples of mod 2 homologous curves that intersect in at most two points pairwise on any closed surface (there are 36 configurations up to stabilization).  In Section~\ref{sec:gp} we give a variant of the well-suited curve criterion that applies to many of the configurations from Section~\ref{sec:configs}.  Finally, in Section~\ref{sec:pA} we use the results of Sections~\ref{sec:int}--\ref{sec:gp} in order to prove Theorem~\ref{main:pa}.

\subsection*{Acknowledgments}
The authors are supported by NSF Grants DGE - 1650044 and DMS - 1510556.  We would like to thank Sebastian Baader, Mladen Bestvina, Lei Chen, Benson Farb, S{\o}ren Galatius, Asaf Hadari, Chris Leininger, Marissa Loving, Curtis McMullen, Gregor Masbaum, Andrew Putman, Nick Salter, Bal\'azs Strenner, Nick Vlamis, and an anonymous referee for helpful comment and conversations.  We are also grateful to Mehdi Yazdi for bringing Long's question to our attention at the 2017 Georgia International Topology Conference.


\section{The well-suited curve criterion: special cases}
\label{sec:criterion}

As discussed in the introduction, the well-suited curve criterion is the principle that if $f$ is a mapping class and $c$ is a curve in $S_g$ so that the configuration $c \cup f(c)$ is simple enough, then the normal closure of $f$ is equal to the mapping class group.  We will give several examples of this phenomenon in this section, in Lemmas~\ref{wscca}, \ref{wsccb}, and~\ref{wsccsep}.  Besides serving as a warmup for the full version of the well-suited curve criterion, these special cases also suffice to prove Theorem~\ref{main:periodic}, to answer Long's question, and to resolve Ivanov's conjecture.

\subsection*{Curves and intersection number} In what follows we refer to a homotopy class of essential simple closed curves in $S_g$ as a ``curve'' and we write $i(c,d)$ for the geometric intersection number between curves $c$ and $d$. 

We will write $[c]$ for the element of $H_1(S_g;\Z)/\{\pm 1\}$ represented by a curve $c$ (the ambiguity comes from the two choices of orientation).  We will write $[c] \mod 2$ for the corresponding element of $H_1(S_g;\Z/2)$.  A useful fact is that whenever $i(c,d)=0$ we have $[c]=[d]$ if and only if $[c] = [d] \mod 2$.

Finally, we write $|\hat\imath|(c,d)$ for the absolute value of the algebraic intersection number between two elements of $H_1(S_g;\Z)$ corresponding to $c$ and $d$.  We will refer this number as simply the algebraic intersection number, as we will have no need to discuss the signed algebraic intersection number.

\subsection*{Normal generators for the commutator subgroup} The following lemma, along with Lemmas~\ref{wscca} and \ref{wsccb}, already appears in the paper by Harvey--Korkmaz \cite[Lemma 3]{HK}.  The ideas also appeared in the earlier works of McCarthy--Papadopoulos \cite{McPap} and Luo \cite{Luo}.  All of our well-suited curve criteria will be derived from this lemma.

The conclusions of Lemmas~\ref{wscca} and \ref{wsccb} only give that the normal closure of a given element $f$ contains the commutator subgroup of $\Mod(S_g)$.  For $g \geq 3$ it is well-known that $\Mod(S_g)$ is perfect \cite[Theorem 5.2]{Primer} and so in these cases the lemmas imply that $f$ is a normal generator.

\begin{lemma}
\label{commutator}
Suppose $c$ and $d$ are nonseparating curves in $S_g$ with $i(c,d)=1$.  Then the normal closure of $T_cT_d^{-1}$ is equal to the commutator subgroup of $\Mod(S_g)$.
\end{lemma}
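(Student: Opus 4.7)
The plan is to prove the two inclusions $N \subseteq [\Mod(S_g),\Mod(S_g)]$ and $[\Mod(S_g),\Mod(S_g)] \subseteq N$, where $N = \normalclosure{T_cT_d^{-1}}$, by a Lickorish-style collapsing argument in the quotient $\Mod(S_g)/N$.

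For the first inclusion, since $c$ and $d$ are both nonseparating, the change of coordinates principle produces a homeomorphism of $S_g$ taking $c$ to $d$; hence $T_c$ and $T_d$ are conjugate in $\Mod(S_g)$ and therefore agree in the abelianization. Thus $T_cT_d^{-1}$ lies in the commutator subgroup, and since that subgroup is normal it contains all of $N$.

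For the reverse inclusion, I would aim to show that $\Mod(S_g)/N$ is cyclic. Conjugating $T_cT_d^{-1}$ by any $h \in \Mod(S_g)$ gives $T_{h(c)}T_{h(d)}^{-1} \in N$, and by a second application of the change of coordinates principle, $T_{c'}T_{d'}^{-1} \in N$ for every pair of nonseparating curves $(c',d')$ with $i(c',d')=1$. Consequently, modulo $N$ the Dehn twists about any two such curves coincide. I would then invoke the standard fact that the graph on nonseparating curves in $S_g$ whose edges are pairs with intersection number $1$ is connected; chaining equalities along such a path shows that all Dehn twists about nonseparating curves project to a single element of $\Mod(S_g)/N$. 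Combined with the Lickorish--Mumford theorem that such twists generate $\Mod(S_g)$, this forces $\Mod(S_g)/N$ to be cyclic, hence abelian, so $N \supseteq [\Mod(S_g),\Mod(S_g)]$.

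The substantive content lies in the collapsing argument of the previous paragraph---this is the essence of the well-suited curve philosophy---and its only ingredients are the conjugacy of any two intersection-one pairs of nonseparating curves and the connectivity of the intersection-one graph. Neither is a serious obstacle, and I do not expect the argument to require any restriction on $g$ beyond the tacit assumption that $S_g$ carries curves $c$ and $d$ with $i(c,d)=1$, i.e.\ $g \geq 1$.
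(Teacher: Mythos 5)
Your proof is correct, and it takes a route that is noticeably different in packaging from the paper's, even though both exploit the same key observation that $T_cT_d^{-1}\in N$ forces $T_c\equiv T_d$ in $\Mod(S_g)/N$.

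The paper's proof of the reverse inclusion proceeds locally: it invokes Lickorish's generating set of Dehn twists about nonseparating curves $\{c_1,\dots,c_k\}$ with $i(c_i,c_j)\le 1$, notes that the commutator subgroup is normally generated by the $[T_{c_i},T_{c_j}]$, observes that the nontrivial ones are all conjugate to $[T_c,T_d]$, and then closes with the elementary group-theoretic fact that $ab^{-1}\in N$ implies $[a,b]\in N$. Your proof instead argues globally: you propagate the single relation $T_c\equiv T_d\bmod N$ across the entire intersection-one graph of nonseparating curves to conclude that all nonseparating twists coincide modulo $N$, so that $\Mod(S_g)/N$ is generated by one element and is therefore cyclic. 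The two approaches buy slightly different things. Yours avoids the detour through normal generation of the commutator subgroup by commutators of generators and makes the abelianness of the quotient transparent, but it depends on the connectivity of the intersection-one graph on nonseparating curves, a standard but nontrivial fact that the paper does not need. The paper's version only needs the much more specific fact that some generating set by nonseparating twists has pairwise intersections at most one, and it isolates the small group-theoretic lemma ($ab^{-1}\in N\Rightarrow [a,b]\in N$) in a form that is reused implicitly throughout the later well-suited curve criteria. Both arguments are sound; just be sure to either cite or sketch the connectivity of the intersection-one graph if you go your route, since it is the one external input your argument needs that the paper's does not.
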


\begin{proof}

We will show the two inclusions in turn.  For $g \geq 0$, it is known that $\Mod(S_g)$ is generated by Dehn twists about nonseparating curves.  It is also known that the abelianization of $\Mod(S_g)$ is cyclic.  Since the Dehn twists about any two nonseparating curves are conjugate it follows that $T_cT_d^{-1}$, and hence its normal closure, lies in the commutator subgroup of $\Mod(S_g)$.  

It remains to show that the commutator subgroup is contained in the normal closure of $T_cT_d^{-1}$.  Lickorish proved that there is a generating set for $\Mod(S_g)$ where the generators are Dehn twists about nonseparating curves $\{c_1,\dots,c_{3g-3}\}$ in $S_g$ and where each $i(c_i,c_j)$ is at most 1 \cite[Theorem 4.13]{Primer}.  The commutator subgroup of $\Mod(S_g)$ is thus normally generated by the various $[T_{c_i},T_{c_j}]$.  When $i(c_i,c_j) = 0$, the corresponding commutator $[T_{c_i},T_{c_j}]$ is trivial.  The nontrivial commutators $[T_{c_i},T_{c_j}]$ are all conjugate in $\Mod(S_g)$ to $[T_c,T_d]$, where $c$ and $d$ are the curves in the statement of the lemma.  Therefore it suffices to show that the single commutator $[T_c,T_d]$ is contained in the normal closure of $T_cT_d^{-1}$.  

It is a general fact that if $a$ and $b$ are elements of a group $G$ and $N$ is a normal subgroup of $G$ containing $ab^{-1}$ then $[a,b]$ is contained in $N$.  Indeed, if we consider the quotient homomorphism $G \to G/N$ then $a$ and $b$ map to the same element, and so $[a,b]$ maps to the identity.  Applying this general fact to our situation, we have that $[T_c,T_d]$ is contained in the normal closure of $T_cT_d^{-1}$, as desired.
\end{proof}

\subsection*{Two well-suited curve criteria for nonseparating curves} The next two lemmas are special cases of the well-suited curve criterion.

\begin{lemma}
\label{wscca}
Let $g \geq 0$ and let $f \in \Mod(S_g)$.  Suppose that there is a nonseparating curve $c$ in $S_g$ so that $i(c,f(c)) = 1$.  Then the normal closure of $f$ contains the commutator subgroup of $\Mod(S_g)$.
\end{lemma}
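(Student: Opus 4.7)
The plan is to deduce this from Lemma~\ref{commutator} by producing an element of the form $T_c T_d^{-1}$ (with $c, d$ nonseparating and $i(c,d)=1$) inside the normal closure of $f$. The natural choice is $d = f(c)$, since the hypothesis guarantees $i(c, f(c)) = 1$ and $f(c)$ is nonseparating (being the image of a nonseparating curve under a homeomorphism).

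To see that $T_c T_{f(c)}^{-1}$ lies in the normal closure $N$ of $f$, I would compute the commutator
\[
[T_c, f] = T_c \, f \, T_c^{-1} \, f^{-1} = T_c \cdot (f T_c^{-1} f^{-1}) = T_c \cdot T_{f(c)}^{-1},
\]
using the standard conjugation formula $f T_c f^{-1} = T_{f(c)}$ for Dehn twists. The element $[T_c, f]$ is the product of $T_c f T_c^{-1}$ (a conjugate of $f$, hence in $N$) and $f^{-1}$ (in $N$), so it lies in $N$.

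Now the curves $c$ and $f(c)$ are both nonseparating with $i(c, f(c))=1$, so Lemma~\ref{commutator} applies to the pair $(c, f(c))$ and tells us that the normal closure of $T_c T_{f(c)}^{-1}$ in $\Mod(S_g)$ is exactly the commutator subgroup. Since $T_c T_{f(c)}^{-1} \in N$ and $N$ is normal, the normal closure of this element is contained in $N$, giving $[\Mod(S_g), \Mod(S_g)] \subseteq N$, as required.

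There is essentially no obstacle beyond recognizing the right commutator to compute; the whole argument is a short manipulation once Lemma~\ref{commutator} is in hand. The only mild subtlety is checking that $[T_c, f]$ genuinely lies in $N$, which follows immediately from $N$ being normal and containing $f$.
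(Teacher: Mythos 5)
Your proof is correct and follows exactly the same route as the paper's: compute $[T_c,f] = T_c T_{f(c)}^{-1}$, observe it lies in the normal closure of $f$, and invoke Lemma~\ref{commutator}. Nothing to add.
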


\begin{proof}

Consider the commutator $[T_c,f]$.  Since this commutator is equal to the product of $T_cfT_c^{-1}$ and $f^{-1}$, it lies in the normal closure of $f$.  Since $fT_cf^{-1}$ is equal to $T_{f(c)}$ the commutator $[T_c,f]$ is also equal to $T_cT_{f(c)}^{-1}$.  Since $i(c,f(c))=1$, the lemma now follows from Lemma~\ref{commutator}.
\end{proof}

\begin{figure}
\centering
\includegraphics[scale=.4]{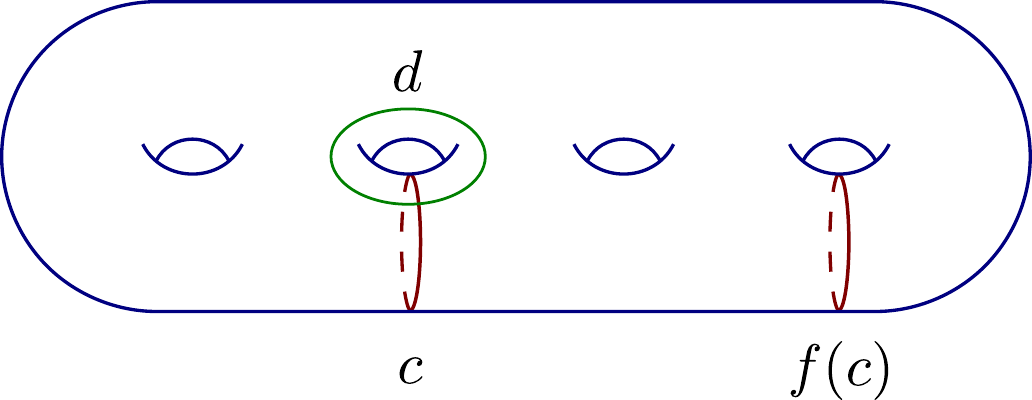}
\caption{The curves $c$, $f(c)$, and $d$ in the proof of Lemma~\ref{wsccb}}
\label{fig:wsccb}
\end{figure}

\begin{lemma}
\label{wsccb}
Let $g \geq 0$ and let $f \in \Mod(S_g)$.  Suppose that there is a nonseparating curve $c$ in $S_g$ so that $i(c,f(c)) = 0$ and $[c] \neq [f(c)]$.  Then the normal closure of $f$ contains the commutator subgroup of $\Mod(S_g)$.
\end{lemma}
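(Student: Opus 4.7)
The plan is to reduce to Lemma~\ref{commutator} by a conjugation trick. As in the proof of Lemma~\ref{wscca}, the commutator $[T_c, f]$ equals $T_c T_{f(c)}^{-1}$ and lies in the normal closure $N$ of $f$. However, since $i(c, f(c)) = 0$, we cannot apply Lemma~\ref{commutator} to this element directly; the goal is to produce inside $N$ an element of the form $T_a T_b^{-1}$ with $a, b$ nonseparating and $i(a,b) = 1$.

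To this end, I would construct an auxiliary nonseparating curve $d$ satisfying $i(d,c) = 1$ and $i(d, f(c)) = 0$, as suggested by Figure~\ref{fig:wsccb}. Such a $d$ exists by the following homological argument. Since $c$ and $f(c)$ are both nonseparating, disjoint, and satisfy $[c] \neq [f(c)]$, the curve $f(c)$ must be nonseparating in $S_g \cut c$; otherwise it would bound a subsurface whose boundary components in $S_g$ are among the two copies of $c$, forcing $[f(c)] \in \{0, \pm[c]\}$, contrary to hypothesis. Hence $S_g$ cut along $c \cup f(c)$ is a connected surface with four boundary components, two of which come from $c$. An arc in this cut surface joining the two boundary components coming from $c$ closes up to a simple closed curve $d$ in $S_g$ with $i(d,c) = 1$ and $i(d, f(c)) = 0$, and the condition $i(d,c) = 1$ automatically makes $d$ nonseparating. (Note that this construction requires $g \geq 2$; the hypotheses of the lemma cannot be satisfied for $g \leq 1$.)

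With $d$ in hand, I conjugate $T_c T_{f(c)}^{-1}$ by $T_d$. Since $T_d$ fixes $f(c)$, the result is $T_{T_d(c)} T_{f(c)}^{-1}$, which lies in $N$. Multiplying this by the inverse of $T_c T_{f(c)}^{-1}$, the factors of $T_{f(c)}$ cancel and we obtain $T_{T_d(c)} T_c^{-1} \in N$. Since $i(d,c) = 1$, the curve $T_d(c)$ is nonseparating, and the standard fact that $i(T_a(b), b) = i(a,b)$ whenever $i(a,b) = 1$ gives $i(T_d(c), c) = 1$. Applying Lemma~\ref{commutator} to $T_{T_d(c)} T_c^{-1}$, we conclude that the commutator subgroup of $\Mod(S_g)$ is contained in $N$.

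The main technical point is the existence of the curve $d$, which is where the precise homological hypothesis $[c] \neq [f(c)]$ (rather than merely $c \neq f(c)$) is used in an essential way; the rest of the argument is a formal conjugation calculation that converts a disjoint pair of curves in $N$ into an intersecting pair, so that the previous lemma can be invoked.
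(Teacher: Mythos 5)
Your proof is correct, and it takes a mildly different route from the paper's. Both arguments begin by putting $[T_c,f]=T_cT_{f(c)}^{-1}$ into the normal closure $N$ and then aim to convert this into an element $T_aT_b^{-1}$ with $i(a,b)=1$ so that Lemma~\ref{commutator} can be applied. The paper does this by choosing an auxiliary curve $d$ with $i(c,d)=1$, $i(f(c),d)=0$, \emph{and} $[d]\neq[f(c)]$, then invoking the change-of-coordinates principle (transitivity of $\Mod(S_g)$ on pairs of disjoint non-homologous curves) to produce an abstract $h\in\Mod(S_g)$ carrying $(c,f(c))$ to $(f(c),d)$; conjugating $[T_c,f]$ by $h$ and multiplying cancels $T_{f(c)}$ and leaves $T_cT_d^{-1}\in N$. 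You instead conjugate by the \emph{explicit} mapping class $T_d$, exploiting the fact that $T_d$ fixes $f(c)$ (disjointness) while moving $c$ to $T_d(c)$ with $i(T_d(c),c)=1$; multiplying by $(T_cT_{f(c)}^{-1})^{-1}$ again cancels $T_{f(c)}$ and leaves $T_{T_d(c)}T_c^{-1}\in N$. Your variant is more elementary in that the conjugating element is a concrete Dehn twist rather than an abstractly-existing mapping class, and it does not require the third condition $[d]\neq[f(c)]$ on the auxiliary curve, which the paper needs only to invoke transitivity. Your justification for the existence of $d$ (that $f(c)$ remains nonseparating in $S_g\cut c$ because $[f(c)]\neq[c]$, so the doubly-cut surface is connected and an arc between the two $c$-boundary circles can be chosen to avoid $f(c)$) is essentially what the paper conveys via Figure~\ref{fig:wsccb}. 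Two minor quibbles in your write-up: the nonseparating-ness of $T_d(c)$ follows simply from $T_d$ being a homeomorphism and $c$ nonseparating, not from $i(d,c)=1$; and the formula $i(T_a(b),b)=i(a,b)$ is specific to $i(a,b)\le 1$ (the general statement being $i(T_a^n(b),b)=|n|\,i(a,b)^2$), so it's worth being careful about how one cites it.
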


\begin{proof}

It follows from the hypotheses on $c$ and $f(c)$ that there is an additional curve $d$ with $i(c,d)=1$, $i(f(c),d)=0$, and $[d] \neq [f(c)]$; see Figure~\ref{fig:wsccb}.  The commutator $[T_c,f] = T_cT_{f(c)}^{-1}$ again lies in the normal closure of $f$.  Since $\Mod(S_g)$ acts transitively on pairs of disjoint, non-homologous curves in $S_g$, there is an $h \in \Mod(S_g)$ taking the pair $(c,f(c))$ to the pair $(f(c),d)$.  The conjugate $h[T_c,f]h^{-1}$, which also lies in the normal closure of $f$, is equal to $T_{f(c)}T_d^{-1}$.  Thus the product
\[
\Big(T_cT_{f(c)}^{-1}\Big)\Big(T_{f(c)}T_d^{-1}\Big) = T_cT_d^{-1}
\]
lies in the normal closure of $f$.  An application Lemma~\ref{wscca} completes the proof.
\end{proof}

\begin{figure}
\centering
\includegraphics[scale=.25]{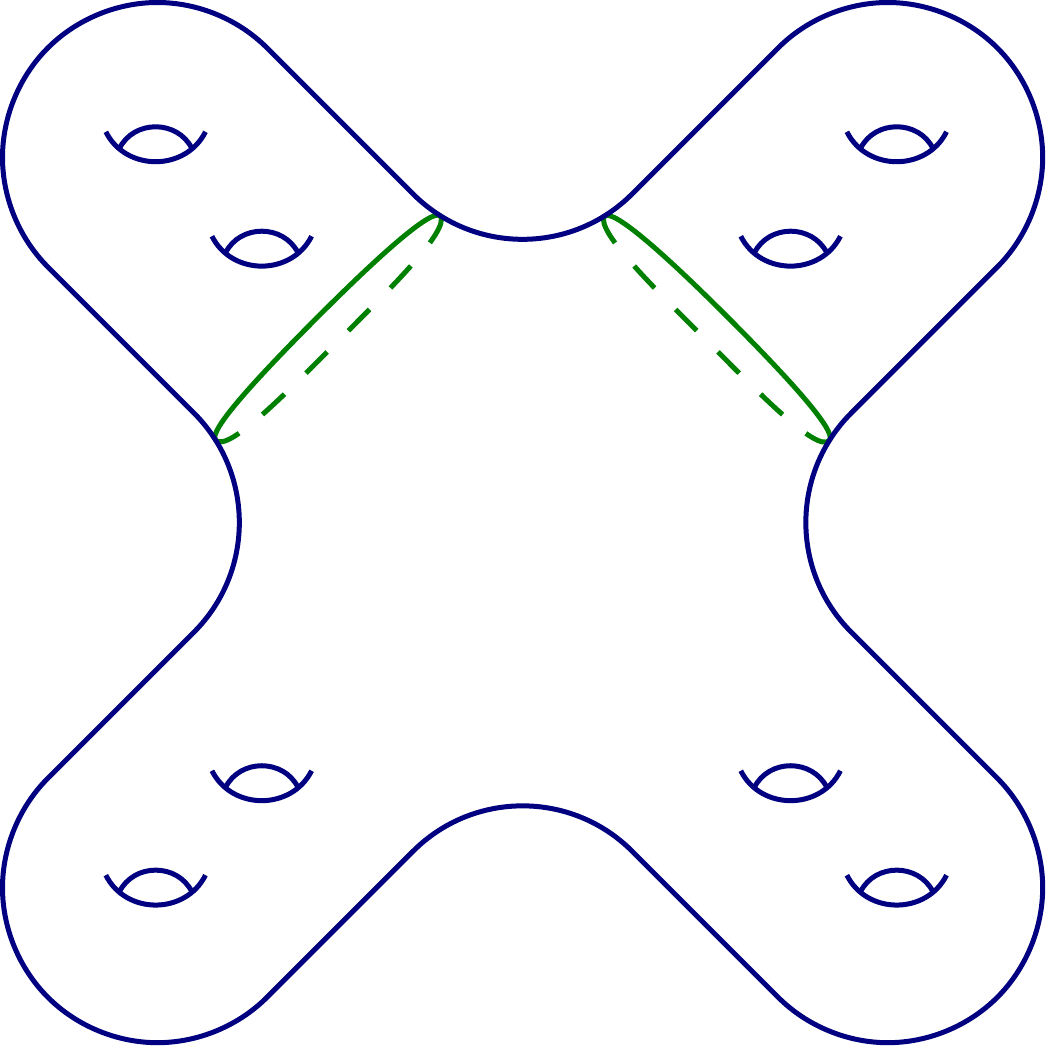}
\qquad \includegraphics[scale=.25]{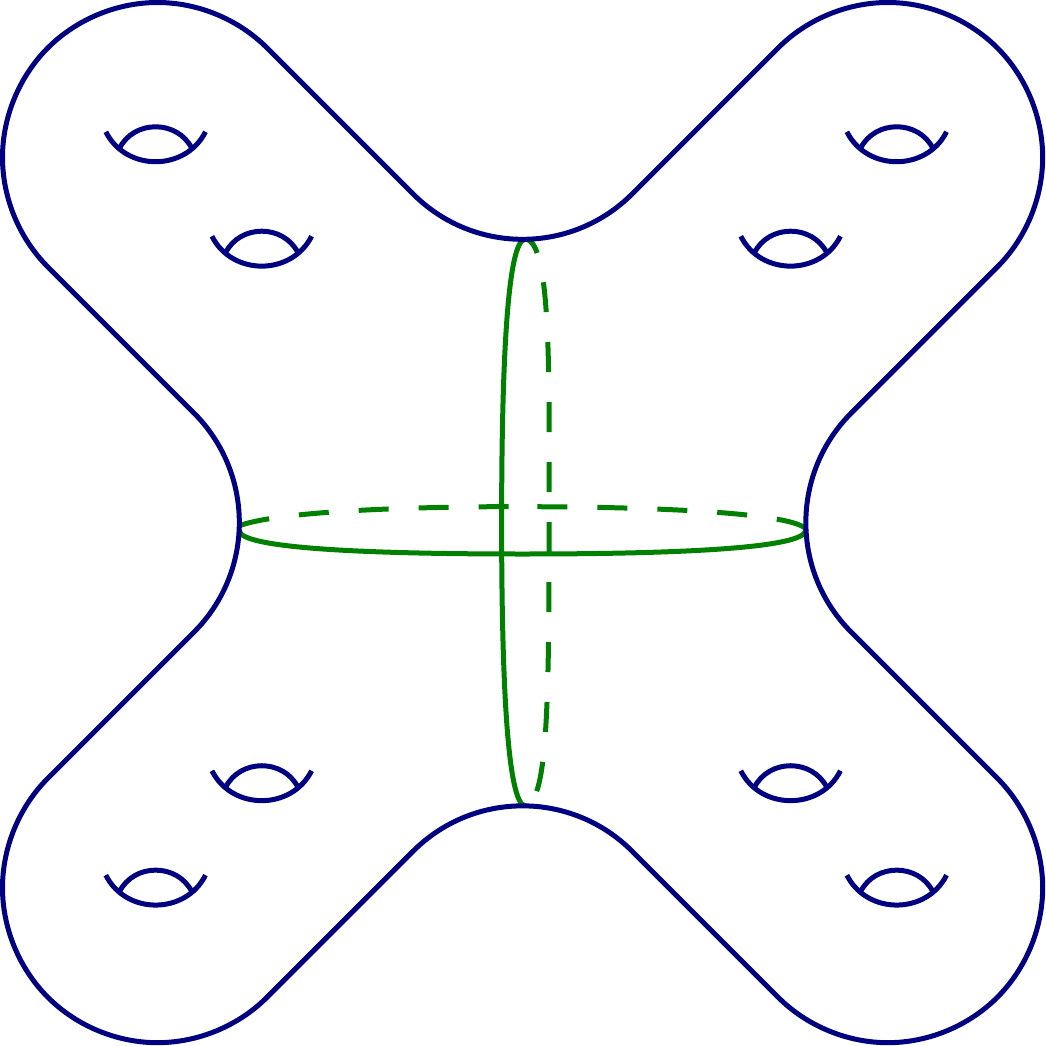}
\caption{The configurations where $c$ is separating and $i(c,f(c)) \leq 2$}
\label{fig:sep}
\end{figure}

\subsection*{A well-suited curve criterion for separating curves} We will now use Lemma~\ref{wsccb} to obtain an instance of the well-suited curve criterion for separating curves, as follows.  

\begin{lemma}
\label{wsccsep}
Let $g \geq 0$ and let $f \in \Mod(S_g)$.  Suppose that there is a separating curve $d$ in $S_g$ with $i(d,f(d)) \leq 2$.  Then the normal closure of $f$ contains the commutator subgroup of $\Mod(S_g)$.
\end{lemma}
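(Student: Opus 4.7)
The plan is to reduce to Lemma~\ref{wsccb} by producing a nonseparating curve $c \subset S_g$ such that $c$ and $f(c)$ are disjoint with $[c]\neq[f(c)]$ in $H_1(S_g;\Z/2)$. Since $d$ is separating, any curve meets it in an even number of points, so the hypothesis forces $i(d,f(d))\in\{0,2\}$; these are the two configurations in Figure~\ref{fig:sep}. We may assume $d$ and $f(d)$ are distinct as homotopy classes, since otherwise $f$ preserves the isotopy class of $d$ and we can apply Lemmas~\ref{wscca} or~\ref{wsccb} to the restriction of $f$ to a side of $d$.

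For $i(d,f(d))=0$, the disjoint separating curves $d$ and $f(d)$ partition $S_g$ into three subsurfaces $A, B, C$, where $A$ is the side of $d$ disjoint from $f(d)$, $C$ is the side of $f(d)$ disjoint from $d$, and $B$ is the middle piece. Comparing the genera of the two sides of $d$ with those of the two sides of $f(d)$, and using that $f$ is a homeomorphism with the non-isotopy hypothesis ruling out a degenerate annular $B$, one finds that $A$ and $C$ have equal genus and that $f(A)=C$. Any nonseparating simple closed curve $c$ in the interior of $A$ then satisfies $f(c)\subset C$; since $A$ and $C$ are disjoint subsurfaces of $S_g$, we get $i(c,f(c))=0$. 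Finally, because $[d]=[f(d)]=0$ in $H_1(S_g;\Z/2)$, the images of $H_1(A;\Z/2)$ and $H_1(C;\Z/2)$ in $H_1(S_g;\Z/2)$ have trivial intersection, so $[c]\neq[f(c)]$ mod $2$ since both classes are nonzero.

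For $i(d,f(d))=2$, the union $d\cup f(d)$ has two transverse intersection points $p_1, p_2$, and an Euler characteristic computation shows that $S_g\setminus(d\cup f(d))$ has at most four components. Since both $d$ and $f(d)$ are separating, one checks that at least one complementary component $R$ has positive genus and that $f$ sends $R$ into a component disjoint from $R$ (the argument again comparing sides of $d$ and sides of $f(d)$ in Figure~\ref{fig:sep}). A nonseparating curve $c\subset R$ then satisfies $f(c)\cap c=\emptyset$, and the same $\Z/2$-homology argument as in the first case gives $[c]\neq[f(c)]$ mod $2$.

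The main obstacle is the combinatorial case analysis in the $i(d,f(d))=2$ configuration, where the arrangement of $d\cup f(d)$ admits several inequivalent topological types, as well as the ``symmetric'' subcase of $i(d,f(d))=0$ where both sides of $d$ have genus $g/2$ and $f$ could in principle swap the large and small sides; in that subcase one may need to replace the direct Lemma~\ref{wsccb} application by producing a nonseparating $c$ with $i(c,f(c))=1$ and invoking Lemma~\ref{wscca}.
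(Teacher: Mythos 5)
Your proposal takes the right high-level route (reduce to Lemma~\ref{wsccb} by producing a nonseparating $c$ with $i(c,f(c))=0$ and $[c]\neq[f(c)]$), and your $i(d,f(d))=0$ analysis is essentially correct: writing the three complementary regions as $A,B,C$ with genera $a,b,c$, the equality of the multisets of side-genera of $d$ and of $f(d)$ forces $a=c$, and $b>0$ (non-isotopy) then distinguishes the two sides, so indeed $f(A)=C$. Your worry about a ``symmetric subcase'' with $g/2$ on each side is therefore unfounded: that would force $b=0$, i.e.\ an annulus, which you already excluded.

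The genuine gap is in the $i(d,f(d))=2$ case. You assert ``one checks that \dots\ $f$ sends $R$ into a component disjoint from $R$,'' but this doesn't parse: $f$ carries $d\cup f(d)$ to $f(d)\cup f^2(d)$, not to $d\cup f(d)$, so $f$ does not permute the complementary components of $d\cup f(d)$. There is no ``component disjoint from $R$'' into which $f(R)$ lands; what you actually need is to control which \emph{side of $f(d)$} contains $f(c)$ versus $c$, i.e.\ to determine whether $f$ sends the side of $d$ containing $c$ to the side of $f(d)$ containing $c$ or to the opposite side. Your draft never establishes this, and determining it directly would require yet another genus-comparison case split.

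The paper's proof avoids all of this with a pigeonhole-style trick that is worth internalizing: choose \emph{two} nonseparating curves $a$ and $b$ lying on \emph{different} sides of $d$ but on the \emph{same} side of $f(d)$ (in the $i=0$ case take $a\subset A$ and $b\subset B$; in the $i=2$ case take $a$ and $b$ in the two regions of one fixed side of $f(d)$, which $d$ splits in two since the curves cross). Since $f$ sends the two sides of $d$ to the two sides of $f(d)$, the images $f(a)$ and $f(b)$ lie on different sides of $f(d)$; as $a$ and $b$ lie on the same side of $f(d)$, at least one of the pairs $\{a,f(a)\}$, $\{b,f(b)\}$ straddles $f(d)$. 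That pair is disjoint and non-homologous, and Lemma~\ref{wsccb} finishes. This completely sidesteps the need to decide which side goes where, which is exactly the step your argument leaves open. (A minor shared issue: both your argument and the paper's implicitly assume $f(d)$ is not isotopic to $d$; your proposed patch of ``applying Lemma~\ref{wscca} or~\ref{wsccb} to the restriction of $f$ to a side'' does not work, since there is no reason that restriction should move a curve only a little. In the paper this is harmless because the lemma is only invoked when $f(d)\neq d$, but your parenthetical fix is not a real fix.)
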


\begin{proof}

Let $d$ be a separating curve in $S_g$ with $i(d,f(d)) \leq 2$.  Since the intersection number between two separating curves is even, we have that $i(d,f(d))$ is either 0 or 2.  In each case there is only one possible configuration up to homeomorphism and the genera of the complementary regions; see Figure~\ref{fig:sep}.

In each case we may find nonseparating curves $a$ and $b$ so that $a$ and $b$ lie on different sides of $d$ and on the same side of $f(d)$.  Since $a$ and $b$ lie on different sides of $d$, it follows that $f(a)$ and $f(b)$ lie on different sides of $f(d)$.  Therefore it must be that either $a$ and $f(a)$ lie on different sides of $f(d)$ or $b$ and $f(b)$ do (or both).  Without loss of generality, suppose $a$ and $f(a)$ lie on different sides of $f(d)$.  Then clearly $i(a,f(a))=0$ and $[a] \neq [f(a)]$.  An application of Lemma~\ref{wsccb} completes the proof.
\end{proof}


\section{Application: periodic elements}
\label{sec:periodic}

In this section we apply the special cases of the well-suited curve criterion from Section~\ref{sec:criterion} to determine the normal closure of each periodic element of each $\Mod(S_g)$.  The main technical result is the following.

\begin{proposition}
\label{per comm}
Let $g \geq 0$ and let $f$ be a nontrivial periodic element of $\Mod(S_g)$ that is not a hyperelliptic involution.  Then the normal closure of $f$ contains the commutator subgroup of $\Mod(S_g)$.
\end{proposition}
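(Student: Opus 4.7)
The plan is to exhibit, for each qualifying periodic $f$, a simple closed curve $c$ on $S_g$ such that the configuration of $c$ and $f(c)$ satisfies the hypothesis of one of Lemmas~\ref{wscca}, \ref{wsccb}, or~\ref{wsccsep}. Any of these conclusions places the commutator subgroup inside the normal closure of $f$, which is exactly what the proposition asserts.

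First I would invoke Nielsen realization to represent $f$ by a finite-order homeomorphism $\phi$ of $S_g$ and form the branched cover $\pi\colon S_g\to O=S_g/\langle\phi\rangle$ onto the quotient orbifold. The key structural fact to have in hand is that the hyperelliptic involution is characterized among nontrivial periodic mapping classes as the unique one (up to conjugacy) that fixes the isotopy class of every simple closed curve on $S_g$, equivalently the unique one acting as $-I$ on $H_1(S_g;\Z)$. Because $f$ is not a hyperelliptic involution, $\phi$ must move some isotopy class of simple closed curve, and in fact must move some $\Z/2$-homology class.

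The main step is to refine this observation into a curve $c$ of the required type. The natural candidates are components of $\pi^{-1}(\gamma)$ for an essential simple closed curve $\gamma$ in $O$: any two distinct such components are automatically disjoint and are permuted by $\phi$. When $O$ has positive genus, there is ample freedom to choose $\gamma$ so that $\pi^{-1}(\gamma)$ contains a nonseparating component $c$ with $\phi(c)\neq c$ and $[c]\neq[f(c)]$, in which case Lemma~\ref{wsccb} applies. When $O$ has genus zero, the cone points of $O$ supply the needed flexibility: simple closed curves encircling a chosen subset of cone points, or the boundaries of regular neighborhoods of arcs between cone points, serve as candidates for $\gamma$ whose preimages can be analyzed directly. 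A finite list of orbifold types, bounded via the Riemann--Hurwitz formula, must be considered; in each one, either a nonseparating lift with distinct homology classes works for Lemma~\ref{wsccb}, or a separating lift with $i(c,f(c))\leq 2$ works for Lemma~\ref{wsccsep}, or (if two preimage components of a single $\gamma$ intersect once in $S_g$) an intersection-one configuration works for Lemma~\ref{wscca}.

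The hard part will be verifying that the construction succeeds in the sporadic cases where $O$ is a sphere with very few cone points, since here a lift $c$ and its image $f(c)$ are most at risk of being homologous. The decisive leverage in those cases comes precisely from excluding the hyperelliptic quotient: in every other genus-zero orbifold type the cone point data is asymmetric enough to guarantee homologically distinct lifts. Finally, for the low-genus ambient cases $g\leq 2$ the classification of periodic mapping classes is short enough that each nontrivial, non-hyperelliptic conjugacy class can be enumerated and checked by hand.
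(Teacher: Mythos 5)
Your high-level plan---pass to a finite-order representative $\phi$, study the quotient orbifold $O = S_g/\langle\phi\rangle$, and hunt for a curve $c$ among preimage components of curves in $O$ so that $c$ and $f(c)$ satisfy one of Lemmas~\ref{wscca}, \ref{wsccb}, or~\ref{wsccsep}---is the same strategy the paper pursues, and the easy branches (free actions, order-$2$ actions with positive-genus quotient) are handled essentially as you describe. The problem is that you explicitly defer the genuinely hard case, the genus-zero quotient with few cone points, and this deferral is a real gap rather than a routine verification. ``A finite list of orbifold types bounded via Riemann--Hurwitz'' is not actually finite: $\phi$ can have any order and any compatible branching data, and for each such $O$ you must produce and verify a specific curve. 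The paper avoids this unbounded enumeration entirely by a different device: once some power of $\phi$ has a fixed point, Kulkarni's theorem represents $S_g$ as a quotient of a regular polygon $P$ on which $\phi$ acts by rotation, and then $c$ is taken to be a chord of $P$ joining midpoints of two identified edges. This forces $i(c,\phi(c))\in\{0,1\}$ at a stroke and reduces the homology question to a clean geometric fact (the region of $P$ around the center touches all three of $c$, $\phi(c)$, $\phi^2(c)$, which is impossible for three disjoint homologous curves). Nothing in your sketch plays the role of Kulkarni's polygon.

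There are two further issues worth flagging. First, your ``key structural fact'' that a non-hyperelliptic periodic $f$ must move some $\mathbb{Z}/2$-homology class is not elementary; it amounts to saying the only torsion in the level-$2$ congruence subgroup $\Mod(S_g)[2]$ comes from hyperelliptic involutions, which is itself a corollary of the very proposition you are proving, so leaning on it risks circularity. (Fortunately your argument does not really depend on it.) Second, if you do intend, as the paper does, to replace $\phi$ by a power having a fixed point, you must rule out that this power is trivial or is itself a hyperelliptic involution---this is exactly the content of Lemma~\ref{no hyp}, which uses the Birman--Hilden exact sequence and the Brouwer--Eilenberg--de Ker\'ekj\'art\'o theorem on finite-order homeomorphisms of $S^2$. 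Your proposal contains nothing addressing this possibility, yet without it the polygon picture can fail to produce a usable curve for the original $f$. In short: same blueprint, but the load-bearing steps (Kulkarni's theorem, the three-disjoint-homologous-curves argument, and Lemma~\ref{no hyp}) are missing from your sketch and are precisely what makes the proof go through.
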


Since $\Mod(S_g)$ is perfect when $g \geq 3$, Proposition~\ref{per comm} immediately implies Theorem~\ref{main:periodic}.  Later in the section we will use Proposition~\ref{per comm} to determine the normal closures of all periodic elements of $\Mod(S_1)$ and $\Mod(S_2)$.  At the end of the section, we prove Corollary~\ref{cor:funar}, which gives a condition on $n$ so that the $n$th power subgroup of $\Mod(S_g)$ is the whole group.

Our proof of Proposition~\ref{per comm} requires a lemma about roots of the hyperelliptic involution, Lemma~\ref{no hyp} below.  Before giving this lemma, we begin with some preliminaries.  

\subsection*{Standard representatives} It is a classical theorem of Fenchel and Nielsen \cite[Theorem 7.1]{Primer} that a periodic mapping class $f \in \Mod(S_g)$ is represented by a homeomorphism $\phi$ whose order is equal to that of $f$.  Moreover, $\phi$ is unique up to conjugacy in the group of homeomorphisms of $S_g$.  We refer to any such $\phi$ as a \emph{standard representative} of $f$.  

\subsection*{The Birman--Hilden theorem} We now recall a theorem of Birman and Hilden.  Let $i : S_g \to S_g$ be a hyperelliptic involution of $S_g$ (a homeomorphism as in Figure~\ref{fig:hi}) and let $\iota$ be the resulting hyperelliptic involution in $\Mod(S_g)$.  Birman and Hilden proved \cite[Theorem 1]{BH} that for $g \geq 2$ there is a short exact sequence
\[
1 \to \langle \iota \rangle \to \SMod(S_g) \stackrel{\Theta}{\to} \Mod(S_{0,2g+2}) \to 1
\]
where $\SMod(S_g)$ is the centralizer in $\Mod(S_g)$ of $\iota$ and $S_{0,2g+2}$ is a sphere with $2g+2$ marked points.  The map $\Theta : \SMod(S_g) \to \Mod(S_{0,2g+2})$ is defined as follows: it is proved \cite[Theorem 4]{BH2} that each element $h$ of $\SMod(S_g)$ has a representative $\psi$ that commutes with $i$, and so $\psi$ can be pushed down to a homeomorphism $\bar \psi$ of the quotient $S_g/\langle i \rangle$, which is a sphere with $2g+2$ marked points, namely, the images of the fixed points of $\iota$.  We note that the above exact sequence is not correct as stated for $g \leq 2$, as the given map $\SMod(S_g) \to \Mod(S_{0,2g+2})$ is not well-defined. 

\subsection*{Roots of the hyperelliptic involution} The next lemma describes a property of roots of the hyperelliptic involution that will be used in the proof of Proposition~\ref{per comm}.  

\begin{lemma}
\label{no hyp}
Let $g \geq 1$.  Suppose $f \in \Mod(S_g)$ is an $n$th root of a hyperelliptic involution (with $n>1$).  Then there is a power of $f$ that is not the identity or the hyperelliptic involution and that has a standard representative with a fixed point.
\end{lemma}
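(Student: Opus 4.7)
The plan is to realize $f$ by a finite-order homeomorphism $\phi$, look for a power of $\phi$ with a fixed point that represents neither the identity nor $\iota$, and rule out the alternative using Riemann--Hurwitz on the orbifold quotient.

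First I would choose a standard representative $\phi$ of $f$, of order $m$. Since $\iota$ has order $2$, the relation $f^n = \iota$ gives $m/\gcd(m,n) = 2$, so $m$ is even and $m/2 \mid n$. Replacing $n$ by $m/2$ (a legitimate substitution since $f^{m/2}$ is still $\iota$), I may assume $n = m/2$, so $m = 2n$ and the hypothesis $n > 1$ becomes $m \geq 4$. Then $i := \phi^n$ is an order-$2$ homeomorphism representing $\iota$, hence is itself a hyperelliptic involution homeomorphism; let $W$ be its set of $2g+2$ fixed points, which $\phi$ preserves as a set since $\phi$ commutes with $\phi^n$.

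Next I would reformulate the goal: it suffices to produce $k$ with $0 < k < m$ and $k \neq n$ such that $\phi^k$ has a fixed point. For such $k$, the map $\phi^k$ is a standard representative of $f^k$, with $f^k \neq 1$ because $\phi$ has order $m$, and $f^k \neq \iota$ because $\phi^k = \phi^n$ in $\Mod(S_g)$ would require $m \mid k - n$, impossible in our range.

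I would then argue by contradiction. If no such $k$ exists, then the only element of $\langle \phi \rangle \setminus \{1\}$ fixing any point of $S_g$ is $\phi^n$, so every stabilizer is a subgroup of $\{1, \phi^n\}$, and the points with non-trivial stabilizer are precisely the points of $W$. By orbit--stabilizer each $\phi$-orbit in $W$ has size $m/2 = n$; in particular $n \mid 2g+2$, and there are $(2g+2)/n$ such orbits, each becoming an order-$2$ cone point in the quotient $X = S_g/\langle\phi\rangle$.

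Finally I would plug this into Riemann--Hurwitz. Letting $h$ be the underlying genus of $X$,
$$
2 - 2g \;=\; 2n(2 - 2h) - n \cdot \frac{2g+2}{n} \;=\; 4n(1-h) - (2g+2),
$$
which collapses to $n(1-h) = 1$ and forces $n = 1$, contradicting $n > 1$. The only subtle point is the setup in step one---the reduction to $n = m/2$, the identification of $\phi^n$ as a genuine hyperelliptic involution homeomorphism, and the verification that the exceptional powers reduce to $\{1,\phi^n\}$---after which the Riemann--Hurwitz bookkeeping is essentially automatic.
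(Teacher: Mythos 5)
Your proof is correct, and it takes a genuinely different route from the paper's. The paper handles $g=1$ by a direct observation (every nontrivial periodic mapping class of the torus is a rotation of the square or hexagon) and then, for $g \geq 2$, invokes the Birman--Hilden exact sequence to push the root $f$ down to a finite-order mapping class of the sphere $S_{0,2g+2}$, applies the Brouwer--Eilenberg--de~Ker\'ekj\'art\'o theorem to see that its standard representative is a rotation of $S^2$ with two fixed points, and then analyzes the two lifts of those fixed points to $S_g$ (which is what produces the $f$-versus-$f^2$ dichotomy in their proof). Your argument instead stays entirely on $S_g$: you reduce to $n = m/2$, observe that $\phi^{m/2}$ is an actual hyperelliptic involution homeomorphism with $2g+2$ fixed points $W$, and then suppose for contradiction that no other nontrivial power of $\phi$ fixes a point. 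That assumption forces every $\phi$-orbit in $W$ to have size $n$ and every singular point of the branched cover $S_g \to S_g/\langle\phi\rangle$ to lie in $W$ with cone order $2$; Riemann--Hurwitz then yields $n(1-h) = 1$, contradicting $n > 1$. This is more self-contained (no Birman--Hilden, no Brouwer--Eilenberg--de~Ker\'ekj\'art\'o) and treats all $g \geq 1$ uniformly, at the cost of being non-constructive: the paper's proof pinpoints that either $f$ itself or $f^2$ works, whereas yours only shows that \emph{some} power $f^k$ with $0 < k < m$, $k \neq m/2$ works. Both proofs rely on the same two background facts -- existence of finite-order (standard) representatives and their uniqueness up to topological conjugacy, which is what justifies that $\phi^{m/2}$ has exactly $2g+2$ fixed points.
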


\begin{proof}

We first dispense with the case $g=1$.  In this case all periodic elements are given by rotations of either the hexagon or square.  In particular, they all have fixed points.  Any nontrivial root $f$ of the hyperelliptic involution is a periodic element, and so the first power of $f$ satisfies the conclusion.

Now assume that $g \geq 2$.  Fix a hyperelliptic involution $i : S_g \to S_g$ and corresponding mapping class $\iota$ as above.  Suppose that $f$ is the $n$th root of $\iota$.  It follows that $f$ lies in $\SMod(S_g)$, the centralizer of $\iota$.  If $\Theta$ is the map from the aforementioned exact sequence of Birman and Hilden, then $\Theta(f)$ is a periodic element of $\Mod(S_{0,2g+2})$.  

Let $\psi$ be a standard representative of $\Theta(f)$.  By ignoring the marked points, we may regard $\psi$ as a finite-order homeomorphism of $S^2$.  A theorem of Brouwer, Eilenberg, and de Ker\'ekj\'art\'o \cite{Brouwer,K,Eilenberg} states that every finite-order orientation-preserving homeomorphism of $S^2$ is conjugate to a rotation.  In particular, $\psi$ has two fixed points in $S^2$.

There are two homeomorphisms of $S_g$ that are lifts of $\psi$; they differ by the hyperelliptic involution $i$.  One of these lifts is a standard representative $\phi$ for $f$ and one is a standard representative for $f\iota$.  If at least one of the fixed points of $\psi$ is marked point, then both lifts of $\psi$ to $S_g$ have a fixed point.  In particular $\phi$ has a fixed point, as desired.

Now suppose that neither of the fixed points of $\psi$ are marked points.  In this case we will show that $f^2$ satisfies the conclusion of the lemma.  

Let $p$ be one of these fixed points and let $\tilde p$ be one point of the preimage in $S_g$.  One of the lifts of $\psi$ fixes $\tilde p$ and one interchanges it with $i(\tilde p)$.  The one that fixes $\tilde p$ cannot be a representative of $f$ since no power of this homeomorphism is $i$ (the only fixed points of $i$ are the preimages of the marked points in $S_{0,2g+2}$).  So the lift of $\psi$ that interchanges $\tilde p$ and $i(\tilde p)$ is a standard representative $\phi$ of $f$.

The homeomorphism $\phi^2$ fixes $\tilde p$.  Clearly then $\phi^2$ is also not equal to $i$ or the identity, since $i$ does not fix $\tilde p$.  Thus $f^2$ is not the identity or a hyperelliptic involution and its standard representative $\phi^2$ fixes a point in $S_g$, as desired.
\end{proof}

\begin{figure}[h]
\centering
\includegraphics[scale=.33]{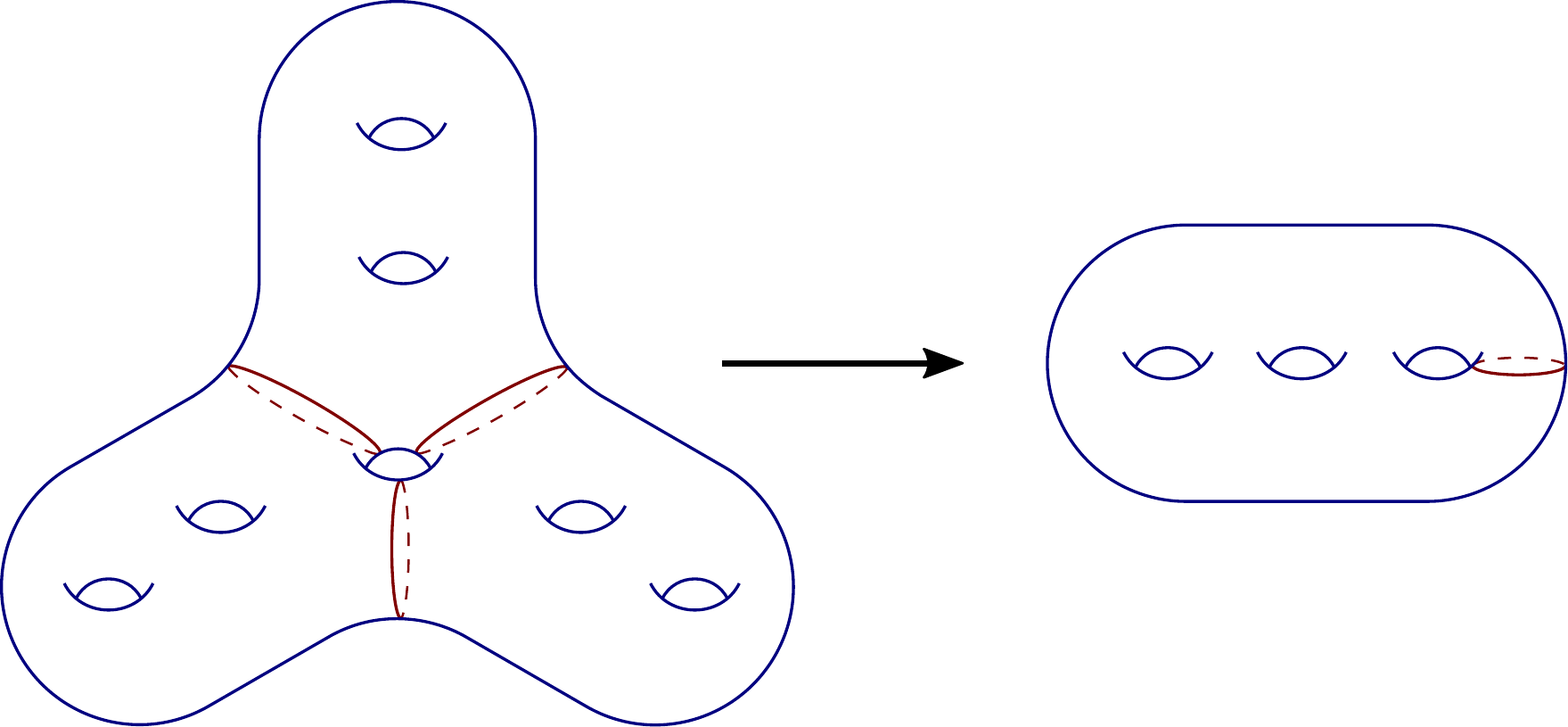}
\caption{A cyclic cover of surfaces}
\label{fig:cover}
\end{figure}

\subsection*{Proof of the theorem} We now prove Proposition~\ref{per comm}, which, as described above, implies Theorem~\ref{main:periodic}.

\begin{proof}[Proof of Proposition~\ref{per comm}]

Let $f$ be a nontrivial periodic mapping class and assume that $f$ is not a hyperelliptic involution.  Let $\phi$ be a standard representative of $f$ and let $\langle \phi \rangle$ denote the cyclic group of homeomorphisms of $S_g$ generated by $\phi$.  

We treat separately three cases:

\smallskip

\hspace*{0ex}\emph{Case 1.} the action of $\langle \phi \rangle$ is free,

\hspace*{0ex}\emph{Case 2.} the action of $\langle \phi \rangle$ is not free and $\phi$ has order 2, and

\hspace*{0ex}\emph{Case 3.} the action of $\langle \phi \rangle$ is not free and $\phi$ has order greater than 2.

\smallskip

We begin with the first case.  If the action of $\langle \phi \rangle$ is free then this action is a covering space action.  Every cyclic covering map $S_g \to S$ is equivalent to one of the covering maps indicated in Figure~\ref{fig:cover}.  In particular, we can find a curve $c$ so that $f$ and $c$ satisfy the hypotheses of Lemma~\ref{wsccb}.  It follows from Lemma~\ref{wsccb} that the normal closure of $f$ contains the commutator subgroup of $\Mod(S_g)$, as desired.

\begin{figure}[h]
\centering
\includegraphics[width=200px]{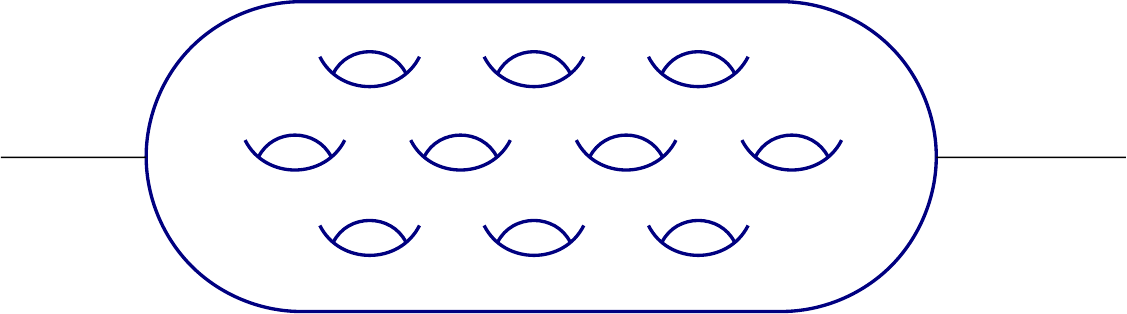}
\caption{Rotation by $\pi$ gives a mapping class of order 2}
\label{fig:skewer}
\end{figure}

We now treat the second case.  There is a classification of homeomorphisms of $S_g$ of order 2 that goes back to the work of Klein \cite{Klein}; see Dugger's paper \cite{Dugger} for a modern treatment.  In the cases where there is a fixed point in $S_g$, such a homeomorphism is conjugate to one of the ones indicated by Figure~\ref{fig:skewer}; there is a 1-parameter family, according to the number of handles above the axis of rotation.  In particular, the conjugacy class of one of these homeomorphisms is completely determined by the genus of the quotient surface $S_g/\langle \phi \rangle$.  When the genus of the quotient is 0, the homeomorphism is a hyperelliptic involution, which is ruled out by hypothesis.  When the genus of the quotient is positive, we can again find a curve $c$ as in Lemma~\ref{wsccb}.  Again by Lemma~\ref{wsccb} the normal closure of $f$ contains the commutator subgroup of $\Mod(S_g)$.  

Finally we treat the third case, where the order of $\phi$ is greater than 2 and the action of $\langle \phi \rangle$ on $S_g$ is not free.  Since the action of $\langle \phi \rangle$ on $S_g$ is not free, some power of $\phi$ has a fixed point.  We may choose this power so that $\phi$ has a fixed point and is not a hyperelliptic involution or the identity.  This is obvious if $f$ is not a root of a hyperelliptic involution and it follows from Lemma~\ref{no hyp} otherwise.  Without loss of generality we may replace $\phi$ by this power with a fixed point, since the normal closure of a power of $f$ is contained in the normal closure of $f$; we may continue to assume that the order of the new $\phi$ is greater than two, for if not we may apply Cases 1 and 2 above.

A theorem of Kulkarni \cite[Theorem 2]{Kulkarni} states that if $\psi$ is a finite order homeomorphism of $S_g$ that has a fixed point, then there is a way to represent $S_g$ as a quotient space of some regular $n$-gon in such a way that $\psi$ is given as rotation of the polygon by some multiple of $2\pi/n$.  We apply this theorem to $\phi$. Let $P$ be the resulting $n$-gon (so $S_g$ is obtained from $P$ by identifying the sides of $P$ in pairs in some way).

Let $c$ be a line segment in $P$ that connects the midpoints of two edges that are identified in $S_g$.  Then $c$ represents a curve in $S_g$.  We may assume that $c$ represents a nontrivial curve in $S_g$, for if all curves in $S_g$ coming from these line segments were trivial, then we would have $g=0$, in which case $\Mod(S_g)$ is trivial.

We may regard $\phi$ as a rotation of $P$ by some multiple of $2\pi/n$.  The image of the line segment $c$ under $\phi$ is another line segment $\phi(c)$.  Since $\phi$ has order greater than 2, it follows that $\phi(c)$ is not equal to $c$, and hence the number of intersections between these line segments is either 0 or 1.  Moreover, this intersection number is equal to $i(c,f(c))$ (here we are regarding $c$ as a curve in $S_g$). 

If $c$ is a separating curve in $S_g$, then the proposition follows from an application of the well-suited curve criterion for separating  curves (Lemma~\ref{wsccsep}).  We may henceforth assume that $c$ is nonseparating. 

If $i(c,f(c))=1$, then it follows from Lemma~\ref{wscca} that the normal closure of $f$ contains the commutator subgroup of $\Mod(S_g)$, as desired.  Finally suppose that $i(c,f(c)) = 0$.  We would like to show that $[c] \neq [f(c)]$.  To do this, it is enough to show that $[c]$, $[f(c)]$, and $[f^2(c)]$ are not all equal (if $[f(c)] = [c]$ then it would follow that $[f^2(c)] = [c]$).

\begin{figure}[h]
\centering
\includegraphics[scale=.225]{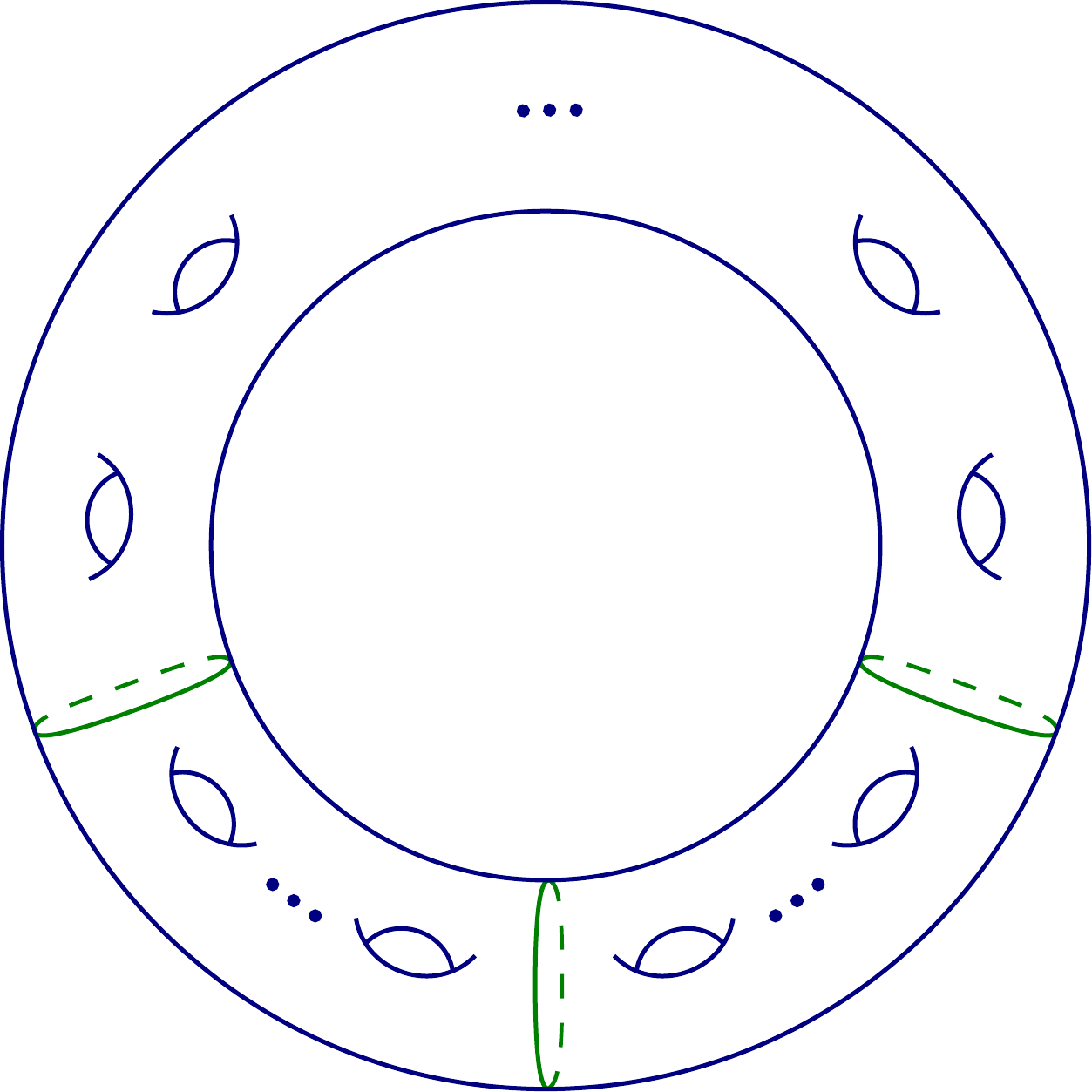}
\caption{Three disjoint homologous curves in $S_g$}
\label{fig:triple}
\end{figure}

Since the order of $\phi$ is greater than 2 the line segments $c$, $\phi(c)$, and $\phi^2(c)$ are all distinct.  If $i(c,f^2(c))=1$ we may again Lemma~\ref{wscca} to $f^2$ to conclude that the normal closure of $f^2$, hence $f$ contains the commutator subgroup.  So we may assume that $c$, $\phi(c)$, and $\phi^2(c)$ are all disjoint.  If we cut $S_g$ along the three curves corresponding to $c$, $\phi(c)$, and $\phi^2(c)$ then there is a region bordering all three curves, namely, the region of $S_g$ containing the image of the center of $P$.  On the other hand, if we have three disjoint curves in $S_g$ that are all homologous, then they must separate $S_g$ into three components, and each region abuts two of the three curves (refer to Figure~\ref{fig:triple}).  Thus, $[c]$, $[f(c)]$, and $[f^2(c)]$ are not all equal and we are done.
\end{proof}

\subsection*{Normal closure of the hyperelliptic involution} To complete the classification of normal closures of periodic elements in $\Mod(S_g)$ it remains to deal with the case of the hyperelliptic involution and also the cases of $g=1$ and $g=2$.  The following proposition already appears in the paper by Harvey and Korkmaz \cite[Theorem 4(b)]{HK}.  In the statement, let $\Psi$ denote the standard symplectic representation $\Mod(S_g) \to \Sp_{2g}(\Z)$.

\begin{figure}[h]
 \labellist
 \small\hair 2pt
 \pinlabel {$d$} [ ] at 135 27
 \pinlabel {$c$} [ ] at 135 90
 \endlabellist  
\centering
\includegraphics[scale=.4]{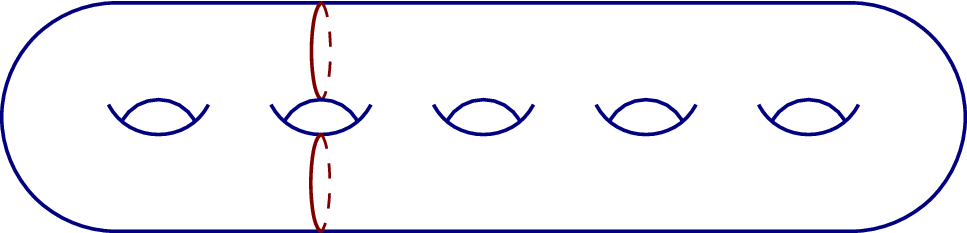}
\caption{The normal closure of $T_cT_d^{-1}$ in $\Mod(S_g)$ is $\I(S_g)$}
\label{fig:bp}
\end{figure}

\begin{proposition}
\label{etorelli}
Let $g \geq 3$ and let $\iota \in \Mod(S_g)$ be a hyperelliptic involution.  Then the normal closure of $\iota$ in $\Mod(S_g)$ is the preimage of $\{\pm I\}$ under $\Psi$.
\end{proposition}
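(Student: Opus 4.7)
My plan is to prove the two inclusions separately; the containment $\normalclosure{\iota} \subseteq \Psi^{-1}(\{\pm I\})$ will be immediate, while the reverse containment will reduce to showing that $\I(S_g) \subseteq \normalclosure{\iota}$. For the easy direction, a hyperelliptic involution acts on $H_1(S_g;\Z)$ as $-I$, so $\Psi(\iota) = -I$; since $\{\pm I\}$ is central in $\Sp_{2g}(\Z)$, its preimage under $\Psi$ is normal in $\Mod(S_g)$ and therefore contains $\normalclosure{\iota}$. For the reverse direction, I would use the coset decomposition $\Psi^{-1}(\{\pm I\}) = \I(S_g) \sqcup \iota \cdot \I(S_g)$ and note that, since $\iota$ itself lies in $\normalclosure{\iota}$, it suffices to show $\I(S_g) \subseteq \normalclosure{\iota}$.

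The heart of the argument is to produce a single bounding pair map inside $\normalclosure{\iota}$. I would choose a nonseparating curve $c$ in $S_g$ that is disjoint from and homologous to its image $d = \iota(c)$, as in Figure~\ref{fig:bp}. Such a $c$ exists because, in the standard picture of $\iota$ as a rotation, a nonseparating curve chosen off the rotation axis is sent by $\iota$ to a disjoint parallel copy of itself, and the two together cobound an $\iota$-invariant subsurface. Using the identities $\iota T_c \iota^{-1} = T_{\iota(c)} = T_d$ and $\iota^{-1} = \iota$, the commutator
\[
[T_c, \iota] \;=\; T_c \iota T_c^{-1} \iota^{-1} \;=\; T_c T_d^{-1}
\]
is a genus-$1$ bounding pair map. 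On the other hand, $[T_c, \iota]$ can be written as $(T_c \iota T_c^{-1}) \cdot \iota^{-1}$, a product of a conjugate of $\iota$ with $\iota$, and so lies in $\normalclosure{\iota}$. Hence $T_c T_d^{-1} \in \normalclosure{\iota}$.

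The final step is to invoke Johnson's theorem that for $g \geq 3$ the Torelli group $\I(S_g)$ is generated by genus-$1$ bounding pair maps. By the change of coordinates principle any two such bounding pair maps are conjugate in $\Mod(S_g)$, so the normal closure in $\Mod(S_g)$ of any single one of them is all of $\I(S_g)$. Applied to $T_c T_d^{-1}$, this gives $\I(S_g) \subseteq \normalclosure{\iota}$, completing the proof. The principal obstacle is the middle paragraph: one must verify the geometric claim, encoded in Figure~\ref{fig:bp}, that a nonseparating $c$ with $c$ and $\iota(c)$ disjoint and homologous can be realized on the symmetric surface of Figure~\ref{fig:hi}, and one must cite the nontrivial generation theorem of Johnson. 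Both are standard but indispensable inputs.
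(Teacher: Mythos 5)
Your proof is correct and follows essentially the same route as the paper's: both directions match, and the key step in each is producing the genus-1 bounding pair map $T_cT_d^{-1}=[T_c,\iota]$ with $d=\iota(c)$ inside the normal closure and then invoking Johnson's theorem. The only cosmetic differences are that you spell out the coset decomposition of $\Psi^{-1}(\{\pm I\})$ and that you state Johnson's theorem as a generation result plus change of coordinates rather than directly as a normal-closure result, as the paper does.
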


\begin{proof}

The image of $\iota$ under $\Psi$ is $-I$.  Since the latter is central in $\Sp_{2g}(\Z)$ it follows that the normal closure of $\iota$ is contained in the preimage of $\{\pm I\}$.  It remains to show that the normal closure of $\iota$ contains the Torelli group $\I(S_g)$.  

It is a theorem of D. Johnson \cite[Theorem 2]{DJ} that $\I(S_g)$ is equal to the normal closure in $\Mod(S_g)$ of the mapping class $T_cT_d^{-1}$, where $c$ and $d$ are the curves in $S_g$ indicated in Figure~\ref{fig:bp} (this mapping class is called a bounding pair map of genus 1).  Since $\iota(c)=d$ we have that 
\[
T_cT_d^{-1} = [T_c,\iota].
\]
Thus $T_cT_d^{-1}$ is equal to a product of two conjugates of $\iota$.  In particular it is contained in the normal closure of $\iota$.  By Johnson's result, the normal closure of $\iota$ contains $\I(S_g)$ and we are done.
\end{proof}

\subsection*{Low genus cases} We now discuss the cases of $g=1$ and $g=2$.  In both cases the hyperelliptic involution $\iota$ is unique and is central in $\Mod(S_g)$.  Thus the normal closure of $\iota$ in both cases is the cyclic group of order 2 generated by $\iota$.  By Proposition~\ref{per comm} the normal closure of any other periodic element of $\Mod(S_g)$ contains the commutator subgroup, and hence the normal closure of an element $f$ is completely determined by the image of $f$ in the abelianization of $\Mod(S_g)$.

The abelianizations of $\Mod(S_1)$ and $\Mod(S_2)$ are isomorphic to $\Z/12\Z$ and $\Z/10\Z$, respectively; see \cite[Section 5.1.3]{Primer}.  The image of any Dehn twist about a nonseparating curve in either case is equal to 1.  Therefore, if $f$ is a periodic element of $\Mod(S_g)$ that is not a hyperelliptic involution and $f$ is equal to a product of $k$ Dehn twists about nonseparating curves, then the normal closure of $f$ in $\Mod(S_g)$ is equal to the preimage under the abelianization map $\Mod(S_g) \to \Z/n\Z$ of the group generated by $k$ (here $n$ is 12 when $g=1$ and 10 when $g=2$).  In particular if we consider any periodic element that is not a hyperelliptic involution then the normal closure has finite index in $\Mod(S_g)$. For a complete list of periodic elements in $\Mod(S_1)$ and $\Mod(S_2)$ and realizations of these elements as products of Dehn twists about nonseparating curves, see the paper of Hirose \cite[Theorem 3.2]{hirose2010} (we note that the second $f_{2,3}$ in Hirose's theorem should be $f_{2,4}$).  In each case the image of the periodic element is not a generator for the abelianization.  So for $g$ equal to 1 or 2, the normal closure of any periodic element is a proper subgroup of $\Mod(S_g)$.

\subsection*{Finite generating sets} We already mentioned results of Harvey, Korkmaz, McCarthy, Papadopoulos, Yoshihara, and the first author of this paper that give periodic normal generators for the mapping class group.  Much more than this, Korkmaz proved that only two conjugate elements of order $4g+2$ are needed to generate $\Mod(S_g)$.  Yoshihara proved that three conjugate elements of order 6 are required for $g \geq 10$.  Finally the first author proved that for $k \geq 6$ and $g \geq (k-1)^2+1$ only three conjugate elements of order $k$ are needed, and for $k=5$ and $g \geq 8$ only four conjugate elements are needed.  Based on these results and Theorem~\ref{main:periodic} we are led to the following question.

\begin{question}
Is there a number $N$, independent of $g$, so that if $f$ is a periodic normal generator of $\Mod(S_g)$ then $\Mod(S_g)$ is generated by $N$ conjugates of $f$?
\end{question}

We emphasize that in the question $N$ is independent of both $g$ and $f$.  

\subsection*{Power subgroups} As in the introduction, let $L=L(g)$ denote the least common multiple of the orders of the periodic elements of $\Mod(S_g)$.  We have the following corollary of Theorem~\ref{main:periodic}.

\begin{corollary}
\label{cor:funar}
Let $g \geq 3$.  Suppose that $n$ is not divisible by $L/2$.  Then the subgroup of $\Mod(S_g)$ generated by the $n$th powers of all elements is equal to the whole group $\Mod(S_g)$.
\end{corollary}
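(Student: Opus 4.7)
The plan is to reduce everything to Theorem~\ref{main:periodic}. The subgroup of $\Mod(S_g)$ generated by all $n$th powers is characteristic, hence normal, so it will equal $\Mod(S_g)$ as soon as it contains a periodic element that is nontrivial and not a hyperelliptic involution. It therefore suffices to exhibit a single periodic $f \in \Mod(S_g)$ such that $f^n$ is nontrivial and not a hyperelliptic involution.

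For a periodic $f$ of order $k$, the power $f^n$ has order $k/\gcd(k,n)$. A short parity check shows that this order is at least $3$ if and only if $k$ does not divide $2n$: the order is $1$ exactly when $k \mid n$, and it is $2$ exactly when $\gcd(k,n)=k/2$, i.e.\ when $k$ is even, $k/2 \mid n$, and $k \nmid n$. In either trivial or order-$2$ case, $k$ ends up dividing $2n$; conversely, if $k \mid 2n$ then the order is $1$ or $2$. So any periodic $f$ whose order $k$ fails to divide $2n$ gives an $f^n$ of order $\geq 3$, and such an element is automatically neither trivial nor a hyperelliptic involution.

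It remains to produce such an $f$ from the hypothesis $L/2 \nmid n$. The plan is to argue by contrapositive: if every periodic element had order dividing $2n$, then $L \mid 2n$; since $\Mod(S_g)$ contains a hyperelliptic involution of order $2$, $L$ is even, and so $L \mid 2n$ is equivalent to $L/2 \mid n$, contradicting the hypothesis. Pick any periodic $f$ with order $k \nmid 2n$, apply Theorem~\ref{main:periodic} to $f^n$, and use normality of the $n$th power subgroup to conclude.

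I do not anticipate any real obstacle here: once Theorem~\ref{main:periodic} is in hand, the argument is purely arithmetic bookkeeping of when $f^n$ can collapse to the identity or to a hyperelliptic involution, together with the evenness of $L$ coming from the existence of the hyperelliptic involution.
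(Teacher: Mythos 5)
Your proof is correct, and its overall architecture matches the paper's: both reduce to finding a periodic $f$ such that $f^n$ is nontrivial and not a hyperelliptic involution, then invoke Theorem~\ref{main:periodic} together with normality of the power subgroup. Where you differ is in how you produce such an $f$. The paper picks a prime $p$ witnessing $L/2 \nmid n$ and argues case-by-case, treating $p$ odd (where $p^j \mid L$ gives a periodic element of order divisible by $p^j$, and $f^n$ has order divisible by $p^{j-k}\geq 3$) separately from $p=2$ (where one must boost to $2^{j+1}\mid L$ to ensure $f^n$ has order divisible by $4$, not just $2$). You instead isolate the clean arithmetic fact that for periodic $f$ of order $k$, the power $f^n$ has order $\geq 3$ exactly when $k \nmid 2n$, and then a single contrapositive lcm argument (using the evenness of $L$ from the hyperelliptic involution to translate $L \mid 2n$ into $L/2 \mid n$) delivers such a $k$. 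This is slicker: it avoids the prime-by-prime split and makes transparent why the correct threshold is $L/2$ rather than $L$. The paper's version is more constructive in that it tracks an explicit prime power surviving in the order of $f^n$, but for the purpose at hand your uniform criterion is sufficient and arguably easier to verify.
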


This result improves on a theorem of Funar \cite[Theorem 1.16(2)]{funar} who showed the analogous result with $L/2$ replaced by $4g+2$.  Note that $4g+2$ is a factor of $L/2$ since $L$ has 4 and $2g+1$ as factors.  Thus our result indeed recovers the theorem of Funar.  For comparison, when $g=4$ the number $L/2$ is 360 and $4g+2$ is 18.

\begin{proof}[Proof of Corollary~\ref{cor:funar}]

Since $L/2$ does not divide $n$, then for some prime $p$ we have that $L/2$ has a factor $p^j$ whereas $n$ only has a factor $p^k$ with $j>k \geq 0$.

If this $p$ is an odd prime, then $\Mod(S_g)$ contains a periodic element $f$ whose order has $p^j$ as a factor.  The element $f^n$ is nontrivial and its order has $p^{j-k}$ as a factor. Since $p$ is odd it follows that $f^n$ is not a hyperelliptic involution.  By Theorem~\ref{main:periodic}, $f^n$ is a normal generator for $\Mod(S_g)$.  Since $f^n$ is contained in the $n$th power subgroup we are done in this case.

If instead $p$ is 2, then $L/2$ has a factor $2^j$ whereas $n$ only has a factor $2^k$ with $j>k \geq 0$. But then $L$ has a factor $2^{j+1}$. This implies that $\Mod(S_g)$ contains a periodic element $f$ whose order has $2^{j+1}$ as a factor, and we have that $f^n$ is nontrivial and its order has $2^{j+1-k}$ as a factor, which is at least 4. Hence $f^n$ is nontrivial and not the hyperelliptic involution.  Again an application of Theorem~\ref{main:periodic} completes the proof.
\end{proof}

If we only consider periodic elements when analyzing power subgroups, we could at most hope to replace $L/2$ by $L$ in the corollary (as the $L$th power of every periodic element is trivial). However, for some $g$ the analysis fails with this replacement. For instance, when $g$ is a power of 2, the element of order $4g$ is the only element that is nontrivial when raised to the power $L/2$, and this power is the hyperelliptic involution.

Of course there do exist values of $n$ so that the $n$th power subgroup of $\Mod(S_g)$ is not $\Mod(S_g)$.  Indeed for any group $G$ with a proper subgroup $H$ of finite index, there is an $n$ so that the $n$th power group of $G$ is a subgroup of $H$ and hence is not equal to $G$.

\subsection*{Surfaces with marked points} We now give an extension of Theorem~\ref{main:periodic} to the case of surfaces with marked points.  As in the introduction, by a hyperelliptic involution in $\Mod(S_{g,n})$ we mean a periodic element that maps to a hyperelliptic involution in $\Mod(S_g)$ under the forgetful map $\Mod(S_{g,n}) \to \Mod(S_g)$.  

\begin{theorem}
\label{thm:punc}
For $g \geq 3$ and $n \geq 1$, the normal closure of any nontrivial periodic mapping class that is not a hyperelliptic involution contains $\PMod(S_{g,n})$.  In particular, when $n=1$, the normal closure is equal to $\Mod(S_{g,n})$.
\end{theorem}

Let $\Sigma_n$ denote the symmetric group on $n$ letters.  There is a short exact sequence
\[
1 \to \PMod(S_{g,n}) \to \Mod(S_{g,n}) \to \Sigma_n \to 1.
\]
Let $\AMod(S_{g,n})$ denote the preimage in $\Mod(S_{g,n})$ of the alternating group $A_n$; this is a subgroup of $\Mod(S_{g,n})$ of index 2.  Because the only normal subgroups of $\Sigma_n$ are the trivial group, $A_n$, $\Sigma_n$, and (when $n=4$) the Klein four group, it follows from Theorem~\ref{thm:punc} that the only possibilities for the normal closure of a nontrivial, non-hyperelliptic, periodic mapping class $f \in \Mod(S_{g,n})$ with $g \geq 3$ and $n \neq 4$ are $\PMod(S_{g,n})$, $\AMod(S_{g,n})$, and $\Mod(S_{g,n})$, according to whether the image of $f$ in $\Sigma_n$ is trivial, nontrivial even, or odd.

\begin{proof}[Proof of Theorem~\ref{thm:punc}]

Let $f$ be a nontrivial periodic mapping class in $\Mod(S_{g,n})$ that is not a hyperelliptic involution.  Since $\PMod(S_{g,n})$ is perfect, it suffices to show that the normal closure of $f$ contains the commutator subgroup of $\PMod(S_{g,n})$.  As in the proof of Proposition~\ref{per comm}, we accomplish this by establishing well-suited curve criteria for $\Mod(S_{g,n})$ and then verifying that $f$ satisfies one of these criteria.

The statements and proofs of the well-suited curve criteria established in Section~\ref{sec:criterion} carry over to the case of surfaces with marked points, with $S_g$ replaced by $S_{g,n}$, with $\Mod(S_g)$ replaced by $\PMod(S_{g,n})$, and with the following caveats.  In the statement of Lemma~\ref{wsccb}, we should interpret $[c] \neq [f(c)]$ to mean that the images of $c$ and $f(c)$ in $S_g$ are not homologous mod 2.  Similarly, in the statement of Lemma~\ref{wsccsep}, we should interpret the assumption that $d$ is separating to mean that the image of $d$ in $S_g$ is an essential separating curve.  A key point is that, like in the case $n=0$, the group $\PMod(S_{g,n})$ has a generating set consisting of Dehn twists about nonseparating curves, where the pairwise intersection numbers of the curves is at most 1; see \cite[Section 4.4.4]{Primer}.

As in the proof of Theorem~\ref{main:periodic}, we take $\phi$ to be a standard representative of $f$.  As in the case $n=0$, this is a homeomorphism of $S_{g,n}$ that represents $f$ and has the same order.  An essential point is that $\phi$ may be regarded as a homeomorphism of $S_g$.  Because of this, we may apply the same classification results and theorems used in the proof of Proposition~\ref{per comm}.    As in the proof of Theorem~\ref{main:periodic} we consider three cases, according to whether the action of $\langle \phi \rangle$ is free, the action is not free and the order of $\phi$ is 2, or the action is not free and the order of $\phi$ is greater than 2.

In the first two cases, the proof is the same; we simply choose the curve $c$ to avoid the marked points.  In the third case, we may need to make a small perturbation of the line segment $c$ in order to avoid the marked points; the proof then applies as stated. 
\end{proof}


\section{Application: Long's question and Ivanov's conjecture}
\label{sec:long}

In this section we construct, for each $g \geq 1$, a pseudo-Anosov mapping class that normally generates $\Mod(S_g)$ (for $g=1$ pseudo-Anosov mapping classes are usually called Anosov, but we will not make this distinction).  As discussed in the introduction, it follows from our Theorem~\ref{main:pa} that such mapping classes exist when $g \geq 3$.  The examples in this section are simple and explicit and only use the special cases of the well-suited curve criterion given in Section~\ref{sec:criterion}, whereas the proof of Theorem~\ref{main:pa} is more involved and requires the more general version of the well-suited curve criterion given in Section~\ref{sec:general}.  

We begin with a very simple family of pseudo-Anosov mapping classes that normally generate, then we give examples that have large stretch factor, examples that have small stretch factor, and examples that have large translation length on the curve graph.  Finally we give examples with large stretch factor with normal closure equal to any given congruence subgroup.  As described in the introduction, the examples in this section answer Long's question in the affirmative and resolve Ivanov's conjecture in the negative.

\begin{figure}[h]
 \labellist
 \small\hair 2pt
 \pinlabel {$a_1$} [ ] at 83 112
 \pinlabel {$a_2$} [ ] at 193 112
 \pinlabel {$a_3$} [ ] at 303 112
 \pinlabel {$b_1$} [ ] at 28 53
 \pinlabel {$b_2$} [ ] at 138 53
 \pinlabel {$b_3$} [ ] at 248 53
 \pinlabel {$b_4$} [ ] at 358 53
 \endlabellist  
\centering
\includegraphics[width=175px]{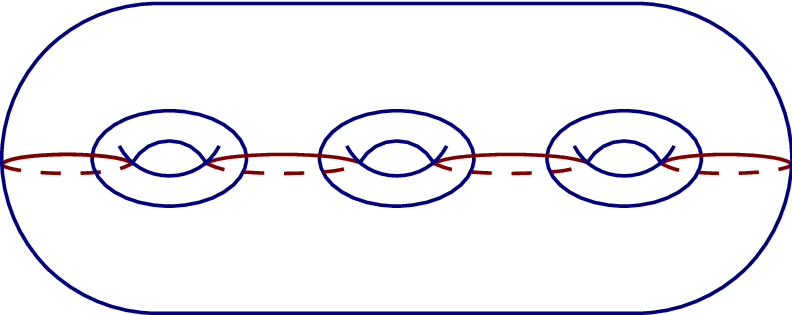}
\caption{The $a_i$ and $b_i$ used in the definition of $T_A$ and $T_B$}
\label{fig:thurston}
\end{figure}

\subsection*{First examples} For each $g \geq 1$, let $T_A  = T_{a_1} \cdots T_{a_g}$ and $T_B=T_{b_1} \cdots T_{b_{g+1}}$ where the $a_i$ and $b_i$ are the curves in $S_g$ indicated in Figure~\ref{fig:thurston}.  The figure shows the case $g=3$ but there is an obvious generalization for all other $g \geq 1$; when $g=1$ the curves $b_1$ and $b_2$ are parallel.  

Consider the mapping class $f = T_A^{-1}T_B$ (note $f$ depends on $g$).   The following proposition answers in the affirmative the question of Long from the introduction.

\begin{proposition}
\label{prop:long}
For each $g \geq 1$ the mapping class $f$ is pseudo-Anosov and it normally generates $\Mod(S_g)$. 
\end{proposition}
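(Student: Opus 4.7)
The plan is to verify two claims in turn: (i) $f$ is pseudo-Anosov, and (ii) the normal closure of $f$ is all of $\Mod(S_g)$.

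For (i), the collections $A = a_1 \cup \cdots \cup a_g$ and $B = b_1 \cup \cdots \cup b_{g+1}$ are multicurves whose union fills $S_g$ (the complementary regions are disks). Thurston's construction of pseudo-Anosov homeomorphisms then provides a homomorphism $\rho \colon \langle T_A, T_B \rangle \to \PSL_2(\R)$ under which $T_A$ and $T_B$ are sent to parabolic elements with distinct fixed points, and under which $w \in \langle T_A, T_B \rangle$ is pseudo-Anosov if and only if $\rho(w)$ is hyperbolic. With the standard normalization $\rho(T_A) = \bigl(\begin{smallmatrix} 1 & \mu \\ 0 & 1 \end{smallmatrix}\bigr)$ and $\rho(T_B) = \bigl(\begin{smallmatrix} 1 & 0 \\ -\mu & 1 \end{smallmatrix}\bigr)$, where $\mu > 0$ is determined by the Perron--Frobenius eigenvalue of the intersection pattern of $A$ and $B$, the product $\rho(T_A^{-1}T_B)$ has trace $2+\mu^2 > 2$ and is therefore hyperbolic. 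Hence $f$ is pseudo-Anosov. (For $g=1$ one may alternatively compute directly on the torus, where $f$ corresponds to a $2 \times 2$ integer matrix with trace of absolute value greater than $2$.)

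For (ii), the key observation is that the curve $c = b_1$ is well-suited for $f$ in the sense of Lemma~\ref{wscca}. Since $b_1$ appears in the product defining $T_B$, the multitwist $T_B$ fixes $b_1$, and since $b_1$ is disjoint from $a_2, \ldots, a_g$, we have
\[
f(b_1) = T_A^{-1} T_B (b_1) = T_A^{-1}(b_1) = T_{a_1}^{-1}(b_1).
\]
Because $i(a_1, b_1) = 1$, restricting to an annular neighborhood of $a_1$ and applying the bigon criterion yields $i(b_1, T_{a_1}^{-1}(b_1)) = 1$. Since $b_1$ is nonseparating, Lemma~\ref{wscca} then shows that $\normalclosure{f}$ contains the commutator subgroup of $\Mod(S_g)$. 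For $g \geq 3$, the group $\Mod(S_g)$ is perfect, so this already gives $\normalclosure{f} = \Mod(S_g)$.

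For $g = 1$ and $g = 2$ an extra step is needed, since the abelianization of $\Mod(S_g)$ is cyclic of order $12$ and $10$ respectively. In both cases every Dehn twist about a nonseparating curve maps to a generator of the abelianization, so the image of $f = T_A^{-1}T_B$ in the abelianization equals $-g + (g+1) = 1$, which is itself a generator. Combined with the fact that $\normalclosure{f}$ contains the commutator subgroup, this forces $\normalclosure{f} = \Mod(S_g)$. The main obstacle in the whole argument is the intersection number computation $i(b_1, f(b_1)) = 1$: one must be careful to rule out additional intersection points coming from the global structure, but this follows since $T_{a_1}^{-1}$ is supported in an annular neighborhood of $a_1$ and $b_1$ meets this neighborhood in a single arc.
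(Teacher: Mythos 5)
Your proof is correct and follows essentially the same route as the paper: verifying that $f$ is pseudo-Anosov via the Thurston construction, observing that $f(b_1) = T_{a_1}^{-1}(b_1)$ so that $i(b_1,f(b_1)) = 1$ and Lemma~\ref{wscca} applies, and then for $g \leq 2$ using that the exponent sum $-g+(g+1)=1$ makes the image of $f$ a generator of the cyclic abelianization. The paper additionally notes that $f$ is also an instance of Penner's construction, but this is an inessential remark; otherwise the two arguments coincide.
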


\begin{proof}

There are well-known constructions of pseudo-Anosov mapping classes due to Thurston and to Penner where certain products of Dehn twists are shown to be pseudo-Anosov; see \cite[Theorems 14.1 and 14.4]{Primer}.  The product $f = T_A^{-1}T_B$ is both an example of the Thurston construction and an example of the Penner construction.  In particular, $f$ is pseudo-Anosov for all $g \geq 1$.

Consider the action of $f$ on $b_1$.  Since $i(b_1,b_i)$ is equal to 0 for all $i$ and $i(b_1,a_i)$ is equal to 0 for $i > 1$ we have that $f(b_1) = T_{a_1}^{-1}(b_1)$.  It follows that $i(f(b_1),b_1)$ is equal to 1.  Thus by Lemma~\ref{wscca} the normal closure of $f$ contains the commutator subgroup of $\Mod(S_g)$.  For $g \geq 3$ it follows that $f$ normally generates $\Mod(S_g)$.  For $g$ equal to 1 or 2, it is enough now to observe that since the sum of the exponents of the Dehn twists in the definition of $f$ is 1.  As such the image of $f$ in the abelianization of $\Mod(S_g)$ is 1, and hence a generator for the abelianization.  Thus in these cases $f$ is a normal generator as well.
\end{proof}

\subsection*{Examples with large stretch factor} There is great flexibility in the construction of $f$.  Indeed, we can alter any of the exponents, except on $T_{a_1}$, and we obtain another normal generator for $\Mod(S_g)$ when $g \geq 3$.  If we also preserve the condition that the exponent sum is relatively prime to 12 or 10, we obtain a normal generator for $\Mod(S_1)$ or $\Mod(S_2)$, respectively.  We take advantage of this flexibility to prove the following.

\begin{proposition}
\label{prop:arbi}
For each $g \geq 1$ there are pseudo-Anosov mapping classes with arbitrarily large stretch factors that normally generate $\Mod(S_g)$. 
\end{proposition}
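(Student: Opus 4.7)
The plan is to construct a one-parameter family $g_k = T_A^{-1} T_B^k$ for $k \geq 1$, exploiting the flexibility in the exponents of the Dehn twists of $f$ (other than the one on $T_{a_1}$) noted just before the statement. The growth of the stretch factors will come from the linear representation provided by the Thurston construction applied to the filling multicurves $A$ and $B$.

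First I would invoke the Thurston construction, which is available because $A \cup B$ fills $S_g$. It produces a representation $\rho \colon \langle T_A, T_B \rangle \to \PSL_2(\R)$ sending $T_A$ and $T_B$ to parabolic matrices conjugate to $\left(\begin{smallmatrix}1 & \mu \\ 0 & 1\end{smallmatrix}\right)$ and $\left(\begin{smallmatrix}1 & 0 \\ -\mu & 1\end{smallmatrix}\right)$ respectively, for some $\mu > 0$ depending only on the filling pair. A direct multiplication shows that $\rho(g_k)$ has trace $2 + k\mu^2 > 2$, so $\rho(g_k)$ is hyperbolic. By Thurston's criterion $g_k$ is pseudo-Anosov, and its stretch factor equals the larger eigenvalue of $\rho(g_k)$, which grows linearly in $k$ and hence tends to infinity.

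Next I would verify that $g_k$ normally generates $\Mod(S_g)$, arguing exactly as in Proposition~\ref{prop:long}. Since $b_1$ is disjoint from every $b_j$ and from every $a_i$ with $i > 1$, the mapping class $g_k$ acts on $b_1$ as $T_{a_1}^{-1}$ does, giving $i(b_1, g_k(b_1)) = i(a_1,b_1)^2 = 1$. By Lemma~\ref{wscca} the normal closure of $g_k$ contains the commutator subgroup of $\Mod(S_g)$, which equals $\Mod(S_g)$ for $g \geq 3$ by perfectness. For $g \in \{1,2\}$ the image of $g_k$ in the abelianization $\Z/n\Z$ (with $n=12$ or $n=10$) is the exponent sum $(g+1)k - g$; restricting to the infinitely many $k$ for which this residue is coprime to $n$ makes the image a generator of the abelianization, completing the argument. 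The only genuine technical step is the trace computation establishing unboundedness of the stretch factor; everything else is a direct adaptation of Proposition~\ref{prop:long}.
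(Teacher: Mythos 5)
Your proposal is correct and follows the paper's own proof essentially verbatim: the same family $T_A^{-1}T_B^k$, the same Thurston-representation trace computation showing the stretch factor tends to infinity, the same application of Lemma~\ref{wscca} via $i(b_1, g_k(b_1)) = 1$, and the same exponent-sum argument in the abelianization for $g = 1, 2$. The only cosmetic deviation is the parameterization of the Thurston matrices (your $\mu$ versus the paper's $\mu^{1/2}$), which does not affect the argument.
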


\begin{proof}

Fix some $g \geq 1$ and consider again the mapping classes $T_A$ and $T_B$ as above.  Much more than proving $T_A^{-1}T_B$ is pseudo-Anosov, Thurston proved that there is a positive number $\mu$ and a homomorphism 
\[ 
\langle T_A , T_B \rangle \to \PSL_2(\R)
\]
with
\[ 
T_A \mapsto \begin{bmatrix}  1 & -\mu^{1/2} \\ 0 & 1\end{bmatrix} \quad \text{and} \quad T_B \mapsto \begin{bmatrix}  1 & 0 \\ \mu^{1/2} & 1\end{bmatrix}
\]
and so that $f \in \langle T_A , T_B \rangle$ is pseudo-Anosov if and only if its image in $\PSL_2(\R)$ (or, rather, a representative in $\SL_2(\R)$) has an eigenvalue $\lambda > 1$; moreover in this case $\lambda$ is the stretch factor of $f$ (the number $\mu$ is the Perron--Frobenius eigenvalue of $NN^T$, where $N$ is the intersection matrix between the $a_i$ and $b_i$; see \cite{Primer}).

Consider then the mapping class $f_n = T_A^{-1}T_B^n$.  Its image in $\PSL_2(\R)$ is
\[
\begin{bmatrix}  1 & \mu^{1/2} \\ 0 & 1\end{bmatrix}\begin{bmatrix}  1 & 0 \\ n\mu^{1/2} & 1\end{bmatrix} = \begin{bmatrix}  1+n\mu & \mu^{1/2} \\ n\mu^{1/2} & 1\end{bmatrix}
\]
The trace of this matrix is larger than 2, and so it has two real eigenvalues.  The larger of these eigenvalues is strictly larger than the trace, which is $2+n\mu$.  In particular, each mapping class $T_A^{-1}T_B^n$ is pseudo-Anosov, and the stretch factors tend to infinity as $n$ tends to infinity.  As in the proof of Proposition~\ref{prop:long}, we have $i(f_n(b_1),b_1)=1$ and so by Lemma~\ref{wscca}  the normal closure of each of these mapping classes contains the commutator subgroup of $\Mod(S_g)$.  This completes the proof for $g \geq 3$.  

For $g=2$ the image of $f_n$ in the abelianization of $\Mod(S_2)$ is $3n-2$.  For each $n$ equal to 1, 3, 5, or 7 mod 10 the image of $f_n$ is therefore relatively prime to 10 and so the proposition is proven for $g=2$.  For $g=1$ the image of $f_n$ is $2n-1$, which is relatively prime to 12 for $n$ equal to 0, 1, 3, or 4 mod 6.
\end{proof}

\begin{figure}[h]
 \labellist
 \small\hair 2pt
 \pinlabel {$a$} [ ] at 177 294
 \pinlabel {$b$} [ ] at 195 225
 \pinlabel {$c$} [ ] at 125 240
 \pinlabel {$d$} [ ] at 160 80
 \pinlabel {$\rho$} [ ] at 65 280
 \endlabellist  
\centering
\includegraphics[width=150px]{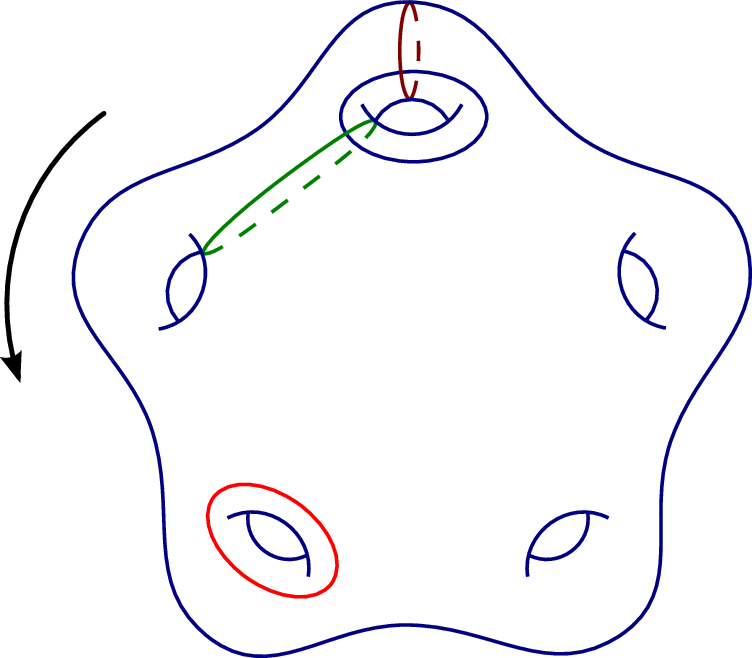}
\caption{The curves and the rotation used in Penner's construction of pseudo-Anosov mapping classes with small stretch factor}
\label{fig:penner}
\end{figure}

\subsection*{Examples with small stretch factor} There are many other explicit examples of pseudo-Anosov mapping classes that satisfy the special cases of the well-suited curve criterion given in Section~\ref{sec:criterion} and hence normally generate $\Mod(S_g)$.  Here we discuss one famous family of examples due to Penner.

Consider the curves $a$, $b$, and $c$ in $S_g$ indicated in Figure~\ref{fig:penner} and consider the order $g$ rotation $\rho$ of $S_g$ indicated in the same figure.  Penner proved that the product $\rho T_c T_b^{-1} T_a$ is pseudo-Anosov (to prove this he notes that the $g$th power is an example of his Dehn twist construction for pseudo-Anosov mapping classes).  By inspection we observe that Penner's mapping class, together with the curve $d$ in Figure~\ref{fig:penner}, satisfies Lemma~\ref{wsccb}.  We thus have the following proposition.

\begin{proposition}
\label{prop:penner}
For each $g \geq 3$, Penner's mapping class $\rho T_c T_b^{-1} T_a$ is a normal generator for $\Mod(S_g)$.
\end{proposition}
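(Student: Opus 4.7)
The plan is to invoke Lemma~\ref{wsccb} with the mapping class $f = \rho T_c T_b^{-1} T_a$ and the curve $d$ from Figure~\ref{fig:penner}, and then upgrade the conclusion using the fact that $\Mod(S_g)$ is perfect for $g \geq 3$. Penner's construction already guarantees that $f$ is pseudo-Anosov (and in particular nontrivial), so the remaining content is the verification of the hypotheses of Lemma~\ref{wsccb}: namely that $d$ is a nonseparating curve with $i(d, f(d)) = 0$ and $[d] \neq [f(d)]$.

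First I would observe from Figure~\ref{fig:penner} that $d$ is nonseparating and disjoint from each of $a$, $b$, and $c$. Because Dehn twists about curves disjoint from $d$ fix the homotopy class of $d$, the formula for $f$ simplifies to $f(d) = \rho(d)$, so the problem reduces to checking that the rotate $\rho(d)$ is disjoint from $d$ and not homologous to it.

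Next I would verify both of these properties directly from the figure. For $g \geq 3$ the rotation $\rho$ has order at least $3$, and the curve $d$ is positioned so that $\rho$ carries it into a distinct portion of the $g$-fold symmetric configuration, which makes the disjointness $i(d,\rho(d)) = 0$ transparent. The homology condition $[d]\neq [\rho(d)]$ will then follow from the principle (noted in Section~\ref{sec:criterion}) that disjoint curves are homologous if and only if they are homologous mod $2$, combined with the observation that the cyclic permutation induced by $\rho$ on the relevant basis of $H_1(S_g;\Z/2\Z)$ does not fix the class of $d$. Once these two properties are in hand, Lemma~\ref{wsccb} gives that the normal closure of $f$ contains the commutator subgroup of $\Mod(S_g)$, and perfectness of $\Mod(S_g)$ for $g \geq 3$ promotes this to normal generation of the full group.

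The main technical checkpoint is the homology assertion: nonseparation, disjointness from $a, b, c$, and disjointness from $\rho(d)$ are all immediate by inspection, but to rule out $[d] = [\rho(d)]$ one must either exhibit the subsurface that they would cobound and derive a contradiction with the rotational symmetry, or compute the two classes explicitly against a homology basis. Either approach is routine, but it is the step that actually requires unpacking the picture rather than merely appealing to the shape of the figure.
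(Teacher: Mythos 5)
Your proposal follows the same route as the paper: the paper's proof consists precisely of observing (``by inspection'') that the curve $d$ in Figure~\ref{fig:penner} satisfies the hypotheses of Lemma~\ref{wsccb} with respect to $f = \rho T_c T_b^{-1} T_a$, and then using perfectness of $\Mod(S_g)$ for $g \geq 3$. Your unpacking of the inspection step---using disjointness of $d$ from $a$, $b$, $c$ to reduce $f(d)$ to $\rho(d)$, and then checking disjointness and non-homology of $d$ and $\rho(d)$ from the rotational symmetry---is a correct and helpful elaboration of exactly what the paper leaves implicit.
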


For $g=2$ the mapping class $\rho$ maps to 5 in $\Z/10\Z$ and so Penner's mapping class maps to 6.  Thus the image of Penner's mapping class has index 2 in $\Z/10\Z$.  On the other hand, $\rho T_c^2 T_b^{-2} T_a^2$ is pseudo-Anosov and normally generates.
  
Penner proved that the stretch factors of his mapping classes are bounded above by $11^{1/g}$ for all $g \geq 2$.  Hence they form a sequence of pseudo-Anosov mapping classes whose stretch factors tend to 1 as $g$ tends to infinity.  As such, these examples served as the initial inspiration for our Theorem~\ref{main:pa}, which says that all pseudo-Anosov mapping classes with sufficiently small stretch factor are normal generators.

\subsection*{Smallest stretch factors} For $g \geq 3$ it is an open problem to determine which mapping classes realize the smallest stretch factor.  Nevertheless, we do know in these cases that the smallest stretch factors are less than $\sqrt{2}$ and so by Theorem~\ref{main:pa} the minimizers are normal generators.

For $g$ equal to 1 or 2, we know precisely which mapping classes realize the smallest stretch factor in $\Mod(S_g)$.  For $g = 1$ the minimum is given uniquely by the conjugacy class of $f_1 = T_{a_1}T_{b_1}^{-1}$ where $a_1$ and $b_1$ are as shown in Figure~\ref{fig:thurston}.  The stretch factor of $f_1$ is $\phi^2 = (3 + \sqrt{5})/2$.  The mapping class $f_1$ is not a normal generator since the image of $f_1$ in $\Z/12\Z$, the abelianization of $\Mod(S_1)$, is 0.  On the other hand, the normal closure of $f_1$ does contain the commutator subgroup of $\Mod(S_1)$.  This is because $i(f_1(b_1),b_1)=1$ and so $f_1$ satisfies Lemma~\ref{wscca}.

For $g=2$, Cho and Ham \cite{CH} proved that the minimum stretch factor is the largest real root of $x^4 - x^3-x^2-x+1$, which is approximately 1.72208.  Lanneau and Thiffeault \cite{LT} gave an independent proof and also classified the conjugacy classes of all of the minimizers and gave explicit representatives of these conjugacy classes in terms of Dehn twists.  Again using the labels in Figure~\ref{fig:thurston}, these are:
\[
f_2 = T_{a_2}T_{b_3}^{-1}T_{a_1}T_{b_1}^2T_{b_2} \ \ \text{and} \ \ f_2' = T_{a_2}^{-1}T_{b_2}^{-1}T_{b_3}^{-1}T_{a_1}T_{b_1}^2.
\]
The images of these mapping classes in the abelianization $\Z/10\Z$ are 4 and 0, respectively, and so neither is a normal generator.  As in the $g=1$ case the normal closures of both $f_2$ and $f_2'$ contain the commutator subgroup of $\Mod(S_2)$ since $i(f_2(b_3),b_3)=1$ and $i(f_2'(b_3),b_3)=1$.

\subsection*{Examples with large translation length} In this section, we have already given examples of pseudo-Anosov normal generators for $\Mod(S_g)$ with either small or large stretch factor.  All of these examples have translation length in $\C(S_g)$ at most 2 since they each satisfy one of the well-suited curve criteria given in Lemmas~\ref{wscca} and~\ref{wsccb}.  It follows that the asymptotic translation lengths are also bounded above by 2.

We will now prove Theorem~\ref{translation}, which states that for each $g \geq 3$ there are pseudo-Anosov mapping classes with the property that all of their odd powers are normal generators.  Since the asymptotic translation length on the curve graph is multiplicative, and since the asymptotic translation length of every pseudo-Anosov mapping class is positive, it follows that there are pseudo-Anosov normal generators with arbitrarily large asymptotic translation lengths.  Because large translation length implies large stretch factor, Theorem~\ref{translation} implies Proposition~\ref{prop:arbi} in the cases when $g \geq 3$.

\begin{proof}[Proof of Theorem~\ref{translation}]

Fix $g \geq 3$.   Let $D_{2g}$ denote the dihedral group of order $2g$.  There is a standard action of $D_{2g}$ on $S_g$ by orientation-preserving homeomorphisms.  In this way, we identify $D_{2g}$ as a subgroup of $\Homeo^+(S_g)$.  We denote the quotient $S_g/D_{2g}$ by $X$.  As an orbifold, $X$ is a sphere with five orbifold points.  

Consider any pseudo-Anosov homeomorphism of $X$, thought of as a sphere with five marked points.  Up to taking a power, we may assume that this homeomorphism lifts to a pseudo-Anosov homeomorphism $\psi$ of $S_g$; see \cite[Section 14.1.1]{Primer}.

The homeomorphism $\psi$ lies in the normalizer of $D_{2g}$, and so we may identify $\psi$ with an element of the automorphism group of $D_{2g}$; it acts by conjugation.  Let $r$ be the element of $D_{2g}$ corresponding to rotation by $2\pi/g$ under the standard action of $D_{2g}$ on the $g$-gon.  Since $r$ and $r^{-1}$ are the only elements of $D_{2g}$ that are conjugate in $\Homeo^+(S_g)$, it must be that the automorphism of $D_{2g}$ induced by $\psi$ maps $r$ to either $r$ or $r^{-1}$.  Let $k \in D_{2g}$ correspond to some reflection of the $g$-gon.  Up to replacing $\psi$ by $k\psi$ we may assume that the automorphism of $D_{2g}$ induced by $\psi$ maps $r$ to $r^{-1}$.  Since $k$ fixes the invariant foliations for $\psi$ and does not change the stretch factor, the new $\psi$ is a pseudo-Anosov element $f$ of $\Mod(S_g)$.  If we identify $r$ as an element of $\Mod(S_g)$ then $frf^{-1}=r^{-1}$.  By construction $f^nrf^{-n} = r^{-1}$ for all odd $n$.

Consider the mapping class $[r,f^n]$ for $n$ odd.  We have
\[ [r,f^n]=r(f^nrf^{-n})^{-1} = r^2. \]
Thus $r^2$ lies in the normal closure of $f^n$ in $\Mod(S_g)$.  By the well-suited curve criterion of Lemma~\ref{wsccb} (and using the assumption $g \geq 3$) the mapping class $r^2$, hence $f^n$, is a normal generator for $\Mod(S_g)$.
\end{proof}

\subsection*{Torelli groups and congruence subgroups} Having found an abundance of pseudo-Anosov normal generators for the mapping class group, we now consider the question of which proper normal subgroups of $\Mod(S_g)$ arise as the normal closure of a single pseudo-Anosov mapping class.

Specifically, our next goal is to prove Theorem~\ref{level}, which states that all level $m$ congruence subgroups $\Mod(S_g)[m]$ arise as the normal closure of a pseudo-Anosov mapping class, and moreover that this mapping class can be chosen to have arbitrarily large stretch factor.  The theorem covers the Torelli group as the case where $m=0$.  The construction in the proof is a variant of a construction of Leininger and the second author \cite[Proof of Proposition 5.1]{LM}.

The proof of Theorem~\ref{level} requires the following technical lemma, whose statement and proof are well known to experts, but are not easily found in the literature.  

\begin{lemma}
\label{k}
Let $A = \{a_1,\dots,a_m\}$ and $B = \{b_1,\dots,b_n\}$ be two multicurves that fill $S_g$.  The stretch factors of the mapping classes
\[
f_k = \big( T_{a_1}^k \cdots T_{a_m} \big)
\big( T_{b_1} \cdots T_{b_n} \big)^{-1}
\]
tend to infinity as $k$ does.
\end{lemma}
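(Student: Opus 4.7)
The plan is to use Penner's construction to represent each $f_k$ by an explicit non-negative integer matrix acting on the weight cone of a train track, and then to show that the Perron--Frobenius eigenvalues of these matrices tend to infinity. By Penner's construction \cite[Theorem 14.4]{Primer}, since $A \cup B$ fills $S_g$ and $f_k$ uses each $T_{a_i}$ positively (with $T_{a_1}$ to the $k$th power) and each $T_{b_j}$ negatively, each $f_k$ is pseudo-Anosov and preserves an invariant train track $\tau$ whose branches are the arcs of $A$ and of $B$ cut at the intersection points of $A \cup B$. On the weight cone of $\tau$, each of $T_{a_i}$ and $T_{b_j}^{-1}$ acts by a non-negative unipotent integer matrix, and the stretch factor $\lambda(f_k)$ equals the Perron--Frobenius eigenvalue of the product matrix $M_k$.

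Writing $N = M(T_{a_1})$ and $M_0 = M(T_{a_2}) \cdots M(T_{a_m}) M(T_{b_n}^{-1}) \cdots M(T_{b_1}^{-1})$, we have $M_k = N^k M_0$, with $M_0$ independent of $k$. If we write $N = I + E$, then the positive entries of $E$ occur exactly in rows indexed by $A$-branches of $a_1$ and columns indexed by $B$-branches meeting $a_1$. Because these two index sets are disjoint, $E^2 = 0$, and thus $N^k = I + kE$ and $M_k = M_0 + k(EM_0)$.

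Next I would invoke the elementary trace bound $\rho(M) \geq \operatorname{tr}(M)/n$, valid for any non-negative $n \times n$ matrix $M$. This reduces the problem to showing that $\operatorname{tr}(EM_0) > 0$, since $\operatorname{tr}(M_k) = \operatorname{tr}(M_0) + k\operatorname{tr}(EM_0)$. Expanding, $\operatorname{tr}(EM_0) = \sum_{i,\ell} E_{i\ell}(M_0)_{\ell i}$, so it suffices to exhibit a pair $(i,\ell)$ with both factors positive. The filling hypothesis forces $a_1$ to meet some $b_j$ (otherwise $a_1$ would be essential in the complement of $A \cup B$), so fixing an $A$-branch $i$ of $a_1$ at an intersection point with $b_j$, the matrix $M(T_{b_j}^{-1})$ contributes a positive entry at a $B$-branch $\ell$ of $b_j$ meeting $a_1$. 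Since all remaining twists in $M_0$ act by non-negative matrices, positivity of $(M_0)_{\ell i}$ persists, and $E_{i\ell} > 0$ by the description of $E$ above. Hence $\operatorname{tr}(M_k)$ grows linearly in $k$, giving $\lambda(f_k) \to \infty$.

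The hardest part is setting up Penner's invariant train track rigorously and verifying that the entries of $N$, $E$, and $M_0$ behave as described, in particular that $E^2 = 0$ and that positivity propagates through the composition. These are combinatorial facts implicit in Penner's construction, but making them precise requires careful bookkeeping of the bigon-track geometry associated to $A \cup B$.
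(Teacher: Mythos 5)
Your proof is correct in outline and takes a genuinely different route from the paper. The paper works with Thurston's construction: it replaces $a_1$ by $k$ parallel copies to form a multicurve $A_k$, views $f_k$ as $T_{A_k}T_B^{-1}$, and uses the representation $\langle T_{A_k},T_B\rangle \to \PSL_2(\R)$ to reduce the problem to estimating the Perron--Frobenius eigenvalue $\mu_k$ of $N_kN_k^T$, where $N_k$ is the $A_k$--$B$ intersection matrix. It then bounds $\mu_k$ from below via the minimum row sum of $(N_kN_k^T)^{D+1}$, interpreted as counting paths of length $2D+2$ in the (blown-up) intersection graph $\Gamma(A_k,B)$; the blow-up guarantees at least $k$ such paths. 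You instead invoke Penner's construction directly, representing $f_k$ on the weight cone of a fixed invariant bigon track by $M_k = N^kM_0$, exploit the nilpotency $N = I+E$ with $E^2 = 0$ to get $M_k = M_0 + kEM_0$, and apply the elementary bound $\rho(M) \geq \operatorname{tr}(M)/n$ with $n$ independent of $k$. What each approach buys: yours avoids the blow-up device and the combinatorial path-counting entirely, turning the growth estimate into a one-line trace computation, which is arguably slicker once the transition-matrix structure is set up. The paper's version works with the simpler object (the geometric intersection matrix $NN^T$ rather than the full Penner transition matrix) and so sidesteps the bigon-track bookkeeping that you correctly flag as the delicate part of your argument, in particular verifying that a single intersection point $p \in a_1 \cap b_j$ yields a pair of branches $(i,\ell)$ with both $E_{i\ell} > 0$ and $(M(T_{b_j}^{-1}))_{\ell i} > 0$, which is the step that makes $\operatorname{tr}(EM_0)>0$. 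That step does go through with the standard Penner conventions, but it is the place where a reader would want the combinatorics spelled out.
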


Since the $T_{a_i}$ all commute with each other, since the $T_{b_i}$ all commute with each other, and since the stretch factor of a mapping class is the same as that of its inverse, the lemma holds equally well if we attach the exponent $k$ to any $T_{a_i}$ or attach the exponent $-k$ to any $T_{b_i}$.

\begin{proof}[Proof of Lemma~\ref{k}]

Consider the intersection graph $\Gamma(A,B)$ with vertices $A \cup B$ and $i(a_i,b_j)$ edges for each pair $\{a_i,b_j\}$ (this is really a multigraph).  Since $A$ and $B$ fill $S_g$, this graph is connected.  Let $D$ be half the maximum path-length distance between two $A$-vertices of $\Gamma(A,B)$.

Let $A_k$ denote the multicurve obtained from $A$ by replacing $a_1$ with $k$ parallel copies of $a_1$, denoted $a_{1,1},\dots,a_{1,k}$.  Let $T_{A_k}$ and $T_B$ be the products of the Dehn twists about the curves in $A_k$ and $B$, respectively.  We thus have
\[
f_k = T_{A_k} T_B^{-1}.
\]
Let $\Gamma(A_k,B)$ be the intersection graph for $A_k$ and $B$.  This graph has $m+n+k-1$ vertices and is the blowup of $\Gamma(A,B)$ obtained by replacing the vertex $a_1$ with $k$ vertices $a_{1,1},\dots,a_{1,k}$.

Let $N$ be the $(n+k-1) \times m$ matrix whose rows correspond to the curves of $A_k$, whose columns correspond to the curves of $B$, and whose entries are given by the corresponding intersection numbers.  The matrix $NN^T$ is a square matrix with rows and columns corresponding to the curves of $A_k$.  Each entry is the number of paths of length 2 between the corresponding vertices of $\Gamma(A_k,B)$.  Similarly, the entries of any power $(NN^T)^\ell$ are the numbers of paths of length $2\ell$ between corresponding vertices of $\Gamma(A_k,B)$.

Similar to the proof of Proposition~\ref{prop:arbi}, the stretch factor of $f_k$ is bounded below by $2+\mu$, where $\mu$ is the Perron--Frobenius eigenvalue of $NN^T$.  Thus it suffices to show that the latter tends to infinity with $k$.

Consider the matrix $\left(NN^T\right)^{D+1}$.  Since $D$ does not depend on $k$ and since the Perron--Frobenius eigenvalue of $\left(NN^T\right)^{D+1}$ is the $(D+1)$st power of the Perron--Frobenius eigenvalue of $NN^T$ it further suffices to show that the Perron--Frobenius eigenvalue of $\left(NN^T\right)^{D+1}$ tends to infinity with $k$.

The Perron--Frobenius eigenvalue of a Perron--Frobenius matrix is bounded below by the minimum row sum of the matrix.  Fix a vertex $v$ of $\Gamma(A_k,B)$ corresponding to a component of $A_k$.  The corresponding row sum of $NN^T$ is the number of paths of length $2D+2$ in $\Gamma(A_k,B)$ starting at $v$.

By the definition of $D$ and the definition of the blowup $\Gamma(A_k,B)$, there is a path from $v$ to $a_{1,1}$ with length $L \leq 2D$.  There are at least $k$ ways to extend this path to a path of length $L+2 \leq 2D+2$, since $a_{1,1}$ is connected to some $b_j$ and this $b_j$ is connected to each of the blown-up vertices $a_{1,1},\dots,a_{1,k}$.  Thus the minimum row sum of $\left(NN^T\right)^{D+1}$ is at least $k$, and the lemma follows.
\end{proof}

The proof of Lemma~\ref{k} can be modified to prove a much more general result, namely, that for $A = \{a_1,\dots,a_m\}$ and $B = \{b_1,\dots,b_n\}$ as in the lemma,  the stretch factors of the mapping classes
\[
\Big( T_{a_1}^{p_1} \cdots T_{a_m}^{p_m} \Big)
\Big( T_{b_1}^{q_1} \cdots T_{b_n}^{q_n} \Big)^{-1}
\]
tend to infinity as the maximum of $\{p_1,\dots,p_m,q_1,\dots,q_n\}$ does.  A special case of this fact was used (and proved) in the proof of Proposition~\ref{prop:arbi} above.  The proof of the more general fact is essentially the same, but is notationally unwieldy.  The statement of Lemma~\ref{k} suffices for the proof of Theorem~\ref{level}.

\begin{figure}
    \centering
        \includegraphics[width=.85\textwidth]{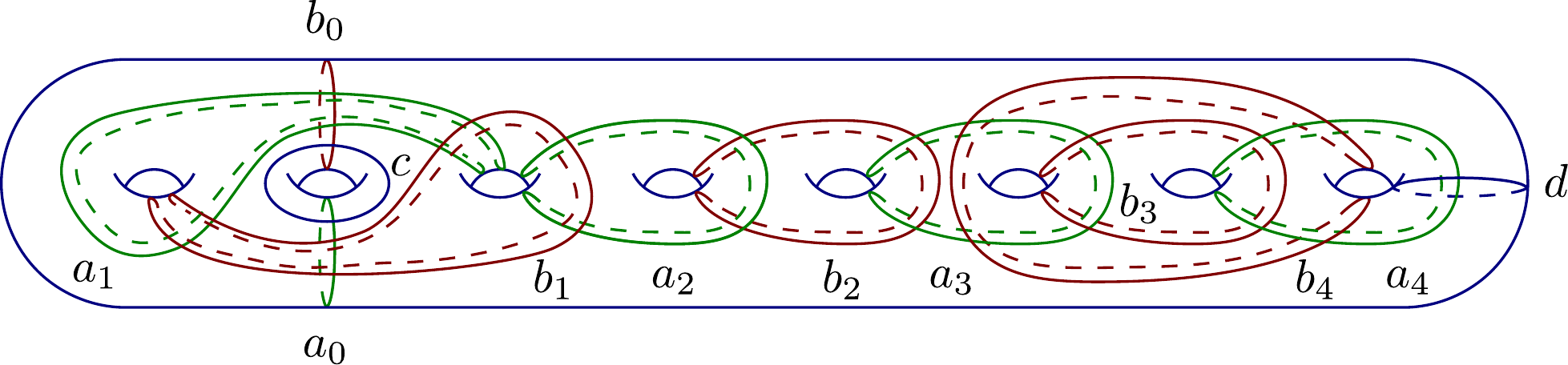}
        \caption{The curves used in the proof of Theorem~\ref{level} in the case $g=8$}
    \label{fig:lm}
\end{figure}

\begin{proof}[Proof of Theorem~\ref{level}]

As in the statement, let $g \geq 3$ be fixed.  Let
\[
a_0,\dots,a_{\lceil g/2 \rceil},b_0,\dots,b_{\lfloor g/2 \rfloor},d
\]
be the curves in $S_g$ as shown in Figure~\ref{fig:lm} for the case $g=8$.  The curve $d$ is disjoint from each $a_i$ when $g$ is odd and it is disjoint from each $b_i$ when $g$ is even.  Consider the mapping class
\[
f_{m,k} = \Big(T_{b_0}^{-1}\left(T_d^m\right)^{(-1)^{g+1}}T_{a_0}\Big)\Big(T_{a_1}^k \cdots T_{a_{\lceil g/2 \rceil}}\Big)\Big(T_{b_1} \cdots T_{b_{\lfloor g/2 \rfloor}}\Big)^{-1}.
\]
This mapping class is pseudo-Anosov.  Indeed when $g$ is odd it is conjugate by $T_{b_0}^{-1}$ to a pseudo-Anosov mapping class arising from the Thurston construction and when $g$ is even it is conjugate by $T_{b_0}^{-1}T_d^{-m}$ to a pseudo-Anosov mapping class arising from the Thurston construction.  

For each $m$ and $k$, there is a curve $c$ so that $c$ and $f_{m,k}(c)$ form a bounding pair of genus 1, an example of which is shown in Figure~\ref{fig:bp}.  The curve $c$ is shown in Figure~\ref{fig:lm}.  As in the proof of Proposition~\ref{etorelli} the normal closure of $T_cT_{f_{m,k}(c)}^{-1}$ in $\Mod(S_g)$ is $\I(S_g)$.  Since $T_cT_{f_{m,k}(c)}^{-1} = [T_c,f_{m,k}]$ we see that the normal closure of $f_{m,k}$ in $\Mod(S_g)$ contains $\I(S_g)$.

The normal closure of $f_{m,k}$ is thus completely determined by its image under the symplectic representation $\Psi : \Mod(S_g) \to \Sp_{2g}(\Z)$.  Since each $T_{a_i}$ and each $T_{b_i}$ lies in $\I(S_g)$ we have that $\Psi(f_{m,k})$ is equal to $\Psi(T_d^m)$.  The latter is the $m$th power of a transvection about a primitive element of $\Z^{2g}$.   Thus by work of Mennicke \cite[Section 10]{Mennicke} its normal closure in $\Sp_{2g}(\Z)$ is the level $m$ congruence subgroup $\Sp_{2g}(\Z)[m]$, which is the kernel of the reduction homomorphism $\Sp_{2g}(\Z) \to \Sp_{2g}(\Z/m\Z)$ (it is essential here that $g > 1$).  The preimage under $\Psi$ of $\Sp_{2g}(\Z)[m]$ is the level $m$ congruence subgroup $\Mod(S_g)[m]$, as desired.

By Lemma~\ref{k} the stretch factor of $f_{m,k}$ tends to infinity as $k$ does (in order to apply Lemma~\ref{k} directly we should replace $T_d^m$ with $T_{d_1} \cdots T_{d_m}$ where each $d_i$ is parallel to $d$).  This completes the proof.
\end{proof}


\section{Application: linear groups}
\label{sec:other}

In this section we use the results we have proven so far about normal generators for mapping class groups to produce many examples of normal generators for certain linear groups. Using the standard surjective representation $\Psi : \Mod(S_g) \to \Sp_{2g}(\Z)$, we can apply our results about mapping class groups to show that certain elements of $\Sp_{2g}(\Z)$ are normal generators for $\Sp_{2g}(\Z)$.

The normal closure of $\Sp_{2g}(\Z)$ in $\SL_{2g}(\Z)$ is $\SL_{2g}(\Z)$.  Thus, any normal generator for $\Sp_{2g}(\Z)$ can also be interpreted as a normal generator for $\SL_{2g}(\Z)$.

To begin, we obtain many examples of normal generators for $\Sp_{2g}(\Z)$ by a direct application of Theorem~\ref{main:periodic}.  That is, if an element of $\Sp_{2g}(\Z)$ is the image of a nontrivial periodic element that is not a hyperelliptic involution then it is a normal generator for $\Sp_{2g}(\Z)$.

What is more, there are periodic elements of $\Sp_{2g}(\Z)$ that are not images of periodic elements but are images of elements that satisfy the well-suited curve criterion.  For instance consider the block matrix
\[
M = \begin{bmatrix}  A & 0 \\ 0 & I_{2g-2}\end{bmatrix}
\]
where $A$ is the $2 \times 2$ matrix 
\[
\begin{bmatrix}  0 & 1 \\ -1 & 0 \end{bmatrix}
\]
and $I_n$ denotes the $n \times n$ identity matrix.  Here and throughout all matrices in $\Sp_{2g}(\Z)$ are written with respect to a standard symplectic basis $(x_1,y_1,\dots,x_g,y_g)$ for $\Z^{2g}$.

The periodic matrix $M$ is the image of the handle rotation $r$ indicated in the left-hand side of Figure~\ref{fig:swap}.  The handle rotation $r$ has infinite order, but it satisfies Lemma~\ref{wscca}.  Thus $r$ is a normal generator for $\Mod(S_g)$ and so $M$ is a normal generator for $\Sp_{2g}(\Z)$.

Similarly, consider the block matrix
\[
N = \begin{bmatrix}  B & 0 \\ 0 & I_{2g-4}\end{bmatrix}
\]
where $B$ is the $4 \times 4$ block matrix 
\[
\begin{bmatrix}  0 & I_2 \\ I_2 & 0 \end{bmatrix}
\]
The matrix $N$ is the image of the handle swap $s$ indicated in the right-hand side of Figure~\ref{fig:swap}.  Again $s$ has infinite order, but since it satisfies Lemma~\ref{wsccb} it is a normal generator for $\Mod(S_g)$, implying that $N$ is a normal generator for $\Sp_{2g}(\Z)$.

We can also use our pseudo-Anosov examples from Section~\ref{sec:long} to find examples of irreducible (in fact, primitive) normal generators for $\Sp_{2g}(\Z)$.  Indeed, if we take the curves $(b_1,a_1,\dots,b_g,a_g)$ in Figure~\ref{fig:thurston} as a basis for $H_1(S_g;\Z)$ (with appropriate orientations) then the mapping class
\[
\left(T_{a_1} \cdots T_{a_g}\right)^{-1} \left(T_{b_1} \cdots T_{b_g}\right)
\]
has primitive image in $\Sp_{2g}(\Z)$.  This image is a normal generator since the mapping class is. 

One might hope to apply Theorem~\ref{main:periodic} and the surjectivity of the map $\Mod(S_g) \to \Sp_{2g}(\Z)$ in order to show that every nontrivial, noncentral finite-order element of $\Sp_{2g}(\Z)$ is a normal generator for $\Sp_{2g}(\Z)$.  This fails, however, because there are finite-order elements of $\Sp_{2g}(\Z)$ so that each element of the preimage in $\Mod(S_g)$ has infinite order.  For instance consider the elements of $\Sp_{2g}(\Z)$ given by the block matrices
\[
M_k = \begin{bmatrix}  -I_{2k} & 0 \\ 0 & I_{2g-2k}\end{bmatrix}
\]
where $I_n$ is the $n \times n$ identity matrix.  For $0 < k < g$ the matrix $M_k$ does not have periodic preimage in $\Mod(S_g)$ (for $k=0$ the identity lies in the preimage and for $k=g$ the hyperelliptic involution does).  Moreover the normal closure of $M_k$ cannot be the entire group $\Sp_{2g}(\Z)$ since $M_k$ lies in the level 2 congruence subgroup, which is a proper normal subgroup of $\Sp_{2g}(\Z)$.  We suspect these are the only counterexamples.

\begin{figure}
    \centering
        \includegraphics[scale=.4]{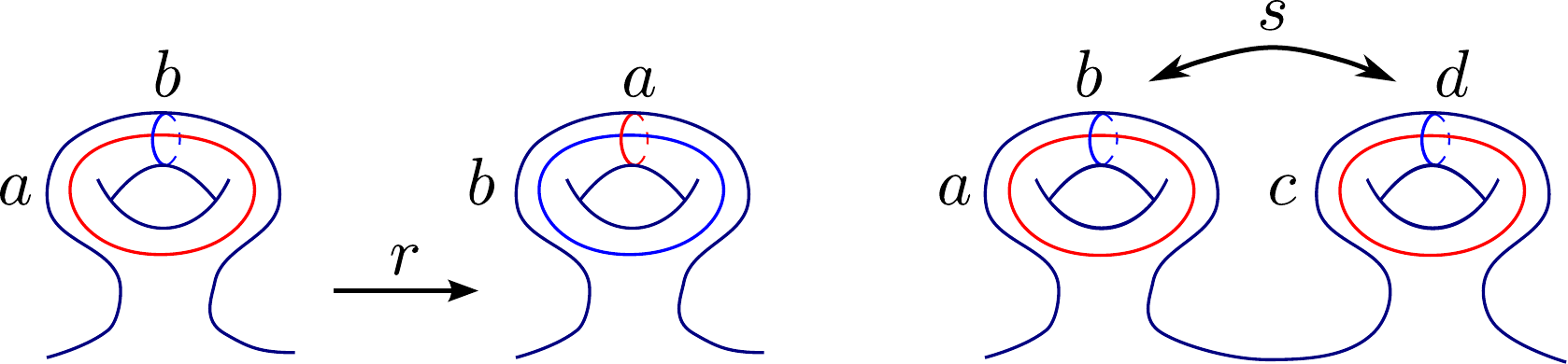}
        \caption{A handle rotation and a handle swap}
    \label{fig:swap}
\end{figure}

Putman has pointed out to us another approach to showing that elements of $\Sp_{2g}(\Z)$ are normal generators.  By the Margulis normal subgroup theorem the normal closure of a nontrivial, noncentral element $M$ of $\Sp_{2g}(\Z)$ has finite index in $\Sp_{2g}(\Z)$.  Since $\Sp_{2g}(\Z)$ enjoys the congruence subgroup property, this normal closure contains a level $m$ congruence subgroup.  Therefore, the problem of showing that $M$ is a normal generator reduces to the problem of showing that the image of $M$ in the finite group $\Sp_{2g}(\Z/m\Z)$ is a normal generator.


\section{Stretch factors and intersection numbers}
\label{sec:int}

The goal of the remaining five sections is to prove Theorem~\ref{main:pa}, which states that any pseudo-Anosov mapping class with stretch factor less than $\sqrt{2}$ is a normal generator.  In this section we take the first step by relating the hypothesis about small stretch factor to the existence of a curve that has small intersection number with its image.   After making this connection, we outline the rest of the proof of Theorem~\ref{main:pa}.  We begin with some preliminaries about singular Euclidean metrics for pseudo-Anosov mapping classes.

\subsection*{Metrics for pseudo-Anosov mapping classes} For a pseudo-Anosov mapping class $f$ with stretch factor $\lambda$ there is a representative $\phi$ and two transverse measured foliations $\F_u$ and $\F_s$ on $S_g$ with
\[
\phi(\F_u) = \lambda \F_u \quad \text{and} \quad \phi(\F_s) = \lambda^{-1} \F_s
\]
(see \cite[Section 13.2.3]{Primer}).  The measured foliations $\F_u$ and $\F_s$ determine a singular Euclidean metric on $S_g$.  If we insist that this metric has unit area then there is a one parameter family of choices for the pair $(\F_u,\F_s)$ and any choice suffices for our purposes; we refer to any one of these as an \emph{$f$-metric} on $S_g$.  An $f$-metric is also an $f^k$-metric for all nonzero $k$.

A curve in $S_g$ has a well-defined length in a given $f$-metric (by a curve we again mean a homotopy class of curves) \cite[Proposition 5.7]{FLP}.  The set of lengths is discrete in $\R$ and so there is a shortest such length, although the curve of shortest length may not be unique.

\subsection*{Stretch factors versus intersection numbers} The following fundamental tool was introduced by Farb, Leininger, and the second author of this paper \cite[Proposition 2.6]{FLM}.

\begin{proposition}
\label{flm}
Let $g \geq 1$ and let $f \in \Mod(S_g)$ be pseudo-Anosov.  Let $c$ be a curve in $S_g$ that is shortest with respect to some $f$-metric.  If the stretch factor of $f$ is less than or equal to $\sqrt[k]{n/2}$ then $i(c,f^k(c)) < n$.
\end{proposition}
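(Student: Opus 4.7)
The plan is to use the flat-geometric structure provided by an $f$-metric. For any curve $\gamma$ on $S_g$ I would define horizontal and vertical transverse measures $h(\gamma) = \mu_s(\gamma)$ and $v(\gamma) = \mu_u(\gamma)$, which scale under the pseudo-Anosov action as $h(f^k\gamma) = \lambda^k h(\gamma)$ and $v(f^k\gamma) = \lambda^{-k} v(\gamma)$, where $\lambda$ is the stretch factor. Applying the planar triangle inequality to the straight pieces of a flat geodesic representative of $\gamma$ also gives the length bound $\ell(\gamma)^2 \geq h(\gamma)^2 + v(\gamma)^2$.

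The first step of the proof is a standard flat-geometric intersection inequality
\[
i(\alpha,\beta) \leq h(\alpha)\,v(\beta) + v(\alpha)\,h(\beta),
\]
which holds on any singular Euclidean surface: realize $\alpha$ and $\beta$ as flat geodesics, decompose each into straight segments lying inside embedded Euclidean rectangles, and observe that a horizontal segment of one curve crosses a given vertical segment of the other at most once. Summing over the decomposition and specializing to $\alpha = c$ and $\beta = f^k(c)$ yields
\[
i(c,f^k(c)) \leq h(c)\,v(c)\,\bigl(\lambda^k + \lambda^{-k}\bigr).
\]

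The second step is the bound $h(c)\,v(c) \leq 1$, which is where the assumption that $c$ is shortest enters. By AM--GM together with the length inequality above, $h(c)\,v(c) \leq \tfrac{1}{2}\ell(c)^2$, so it suffices to establish the systolic estimate $\ell(c) \leq \sqrt{2}$. This is a standard area-comparison statement on a unit-area singular flat surface: a shortest closed geodesic bounds embedded flat strips whose total area cannot exceed the area $1$ of the surface, forcing $\ell(c)^2 \leq 2$. Combining the two steps and using $\lambda > 1$, so that $\lambda^k + \lambda^{-k} < 2\lambda^k$, I obtain
\[
i(c,f^k(c)) < 2\lambda^k\,h(c)\,v(c) \leq 2\lambda^k \leq n,
\]
since the hypothesis $\lambda \leq \sqrt[k]{n/2}$ is equivalent to $2\lambda^k \leq n$.

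The main obstacle will be the geometric estimate $h(c)\,v(c) \leq 1$: the pseudo-Anosov bookkeeping and the intersection inequality are essentially formal once the flat framework is in place, but converting "shortest curve in the $f$-metric" into the precise systolic bound requires a careful extremal argument in singular Euclidean geometry. Everything else is linear algebra in the $(h,v)$-coordinates of the invariant foliations.
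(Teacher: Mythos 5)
The paper does not prove this statement itself; it cites Farb--Leininger--Margalit (their Proposition 2.6) and observes that the $k=1$ argument carries over to all $k$ because an $f$-metric is also an $f^k$-metric and the stretch factor of $f^k$ is $\lambda^k$. Your outline reconstructs essentially that argument: a flat-geometric intersection bound, the scaling of the horizontal and vertical transverse measures under $f$, and a systolic bound on the unit-area flat surface, combined as $i(c,f^k(c))\le h(c)v(c)\bigl(\lambda^k+\lambda^{-k}\bigr)<2\lambda^k\le n$. That chain is correct (and if $h(c)v(c)=0$ the first inequality already gives $i(c,f^k(c))=0<n$, a degenerate case worth noting explicitly).

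Where the outline is weaker than it looks is in the justifications of the two supporting lemmas. The remark that ``a horizontal segment of one curve crosses a given vertical segment of the other at most once'' does not establish $i(\alpha,\beta)\le h(\alpha)v(\beta)+v(\alpha)h(\beta)$: the straight pieces of a flat geodesic are oblique, and even after replacing each oblique piece by an $L$-shaped horizontal-then-vertical path, a \emph{count} of segment-pair crossings is not the measure-weighted expression $h(\alpha)v(\beta)+v(\alpha)h(\beta)$ that the argument needs. The inequality is true, but it requires an honest argument or a citation rather than this sketch. Similarly, ``a shortest closed geodesic bounds embedded flat strips'' is not quite right, since the shortest flat geodesic may pass through cone points and need not have an embedded flat cylinder neighborhood; the bound you want, $\ell(c)^2\le 2\cdot\mathrm{Area}$ for the systole of a closed surface of genus at least one, is Hebda's systolic inequality and should be invoked as such. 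With these two facts properly sourced, your proof is correct and matches the route taken by the result the paper cites.
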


In the paper by Farb, Leininger, and the second author, the statement of Proposition~\ref{flm} is less specific, saying only that if the stretch factor of $f$ is less than $n/2$ then there exists a curve $c$ with $i(c,f(c)) <  n$.  However, the proof there, plus the facts that the stretch factor of $f^k$ is the $k$th power of the stretch factor for $f$ and that an $f$-metric is also an $f^k$-metric, proves the statement given here, and we will need this more general statement.  Specifically, we will apply the proposition in the cases $1 \leq k \leq 3$.

\subsection*{Outline of the proof of Theorem~\ref{main:pa}} We now explain how Proposition~\ref{flm} will be used in the proof of Theorem~\ref{main:pa}, and we give an overview of the remaining four sections of the paper.

Let $f$ be a pseudo-Anosov mapping class.  For us, the first and most important consequence of Proposition~\ref{flm} comes from the $k=1$ case: if the stretch factor of $f$ is less than $\sqrt{2}$ then there is a curve $c$ with
\[
i(c,f(c)) \leq 2
\]
(here we are using the fact that $\sqrt{2} < 3/2$).  If $c$ is separating, then we may apply Lemma~\ref{wsccsep} to conclude that $f$ is a normal generator.  Thus we may way assume that $c$ is nonseparating.  The proof of Theorem~\ref{main:pa} then proceeds by considering three cases, as in the following table.
\medskip

\renewcommand{\arraystretch}{1.35}
\begin{center}
 \begin{tabular}{|c|c|c|c|c|}
   \hline
   Case & $i(c,f(c))$ & $[c]=[f(c)] \mod 2$ \\ \hline 
   1 & 0 or 1 & \xmark \\ \hline
   2 & 2 & \xmark \\ \hline
   3 & 0 or 2 & \cmark \\ \hline
 \end{tabular}
\end{center}

\medskip

\noindent (Of course mod 2 homologous curve cannot have intersection number 1.)  For Case 1, our first well-suited curve criteria, Lemmas~\ref{wscca} and~\ref{wsccb}, immediately apply.  For Case 2, we will need an extension,  Lemma~\ref{chen} in Section~\ref{sec:general}.  In that section we also give the full version of the well-suited curve criterion, a necessary and sufficient condition for a mapping class to be a normal generator.  This full version, Proposition~\ref{prop:wscc}, implies Lemmas~\ref{wscca}, \ref{wsccb}, and~\ref{chen}.

Case 3 is by far the most difficult.  Here we should not expect a simple argument to exist, since the equality $[c] = [f(c)] \mod 2$ holds, for example, for every element of the congruence subgroup $\Mod(S_g)[2]$ and every curve $c$.  

Unlike in Cases 1 and 2, it will not be enough to consider only $c$ and $f(c)$;  we will need to consider $f^2(c)$ in addition to $c$ and $f(c)$.  Thus the first step in Case 3 is to observe the following further consequence of Proposition~\ref{flm}, applied in the case $k=2$:
\[
i(c,f^2(c)) \leq 2.
\]
In Section~\ref{sec:configs}, we enumerate all possibilities for the configuration of the triple $(c,f(c),f^2(c))$, given that $c$ is nonseparating, that $f$ preserves the mod 2 homology class of $c$, that $i(c,f(c)) \leq 2$, and that $i(c,f^2(c)) \leq 2$ (actually we enumerate a larger class of triples).

In Section~\ref{sec:gp} we introduce an additional criterion for a mapping class to be a normal generator, namely, the existence of a good pair of curves.  This new criterion, Lemma~\ref{good pair}, is a generalization of our well-suited curve criterion for separating curves, Lemma~\ref{wsccsep}.  

We prove Theorem~\ref{main:pa} in Section~\ref{sec:pA}.  In treating Case 3 we will show that many of the configurations from Section~\ref{sec:configs} admit a good pair of curves.  We deal with the configurations that do not admit a good pair of curves on an ad hoc basis.  In one subcase we apply the $k=3$ case of  Proposition~\ref{flm} in order to constrain the possible configurations of the quadruple $(c,f(c),f^2(c),f^3(c))$.


\section{The well-suited curve criterion: general version}
\label{sec:general}

In Section~\ref{sec:criterion} we gave two special cases of the well-suited curve criterion for nonseparating curves, Lemmas~\ref{wscca}  and~\ref{wsccb}.  Using these special cases we were able to prove our main theorem about periodic normal generators and to construct pseudo-Anosov normal generators that answer Long's question and resolve Ivanov's conjecture.

In this section we give a more general version of the well-suited curve criterion for nonseparating curves, Chen's Lemma~\ref{chen} below.  Then, as discussed in Section~\ref{sec:int}, we use this lemma to give an application that will be used in Case 2 of the proof of Theorem~\ref{main:pa}, namely, Lemma~\ref{lemma:nonsep lantern}.  Specifically this lemma gives a well-suited curve criterion for the situation where $i(c,f(c))=2$ and $[c] \neq [f(c)] \mod 2$.  At the end of the section we give the general version of the well-suited curve criterion, Proposition~\ref{prop:wscc}, and explain how to use it to recover Chen's lemma.

\subsection*{Chen's lemma} The next lemma is one generalization of our well-suited curve criteria for nonseparating curves, Lemmas~\ref{wscca} and~\ref{wsccb}.  In those lemmas, we require that $c$ and $f(c)$ have particularly simple arrangements, and in particular they have intersection number at most 1.  In the next lemma the intersection number $i(c,f(c))$ can be arbitrarily large, as long as there is a curve $d$ that forms a (specific) simple arrangement with both.

The formulation of the well-suited curve criterion in Lemma~\ref{chen} was suggested to us by Lei Chen.  Our original formulation, Proposition~\ref{prop:wscc} below, is more general.  The full generality of Proposition~\ref{prop:wscc} is not needed for our application, Lemma~\ref{lemma:nonsep lantern} below.  Lemma~\ref{chen} is perhaps the simplest version of the well-suited curve criterion that suffices for the application.

\begin{lemma}
\label{chen}
Let $g \geq 0$ and let $f \in \Mod(S_g)$.  Suppose that there are nonseparating curves $c$ and $d$ in $S_g$ so that $i(c,d)=1$ and $i(f(c),d) = 0$.  Then the normal closure of $f$ contains the commutator subgroup of $\Mod(S_g)$.
\end{lemma}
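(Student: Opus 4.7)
The plan is to produce, inside the normal closure of $f$, an element of the form $T_c T_e^{-1}$ with $e$ a nonseparating curve satisfying $i(c,e)=1$, and then invoke Lemma~\ref{commutator}. The obstacle to applying Lemma~\ref{commutator} to the commutator $[T_c,f]=T_cT_{f(c)}^{-1}$ directly is that we have no control on $i(c,f(c))$; the auxiliary curve $d$ will play the role of a ``bridge'' that lets us replace $f(c)$ with a more useful curve.

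First, exactly as in the proof of Lemma~\ref{wscca}, I would observe that
\[
[T_c,f] \;=\; T_c\,T_{f(c)}^{-1}
\]
lies in the normal closure of $f$. Next, I would exploit the hypothesis $i(f(c),d)=0$: this forces $T_{f(c)}$ and $T_d$ to commute, so conjugating by $T_d$ gives
\[
T_d\bigl(T_c T_{f(c)}^{-1}\bigr)T_d^{-1} \;=\; \bigl(T_dT_cT_d^{-1}\bigr)T_{f(c)}^{-1} \;=\; T_{T_d(c)}\,T_{f(c)}^{-1},
\]
which is again in the normal closure of $f$. Multiplying this element by the inverse of $T_cT_{f(c)}^{-1}$ cancels the $T_{f(c)}^{\pm 1}$ factors and leaves $T_{T_d(c)}T_c^{-1}$ (or its inverse) in the normal closure of $f$.

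The main thing left to check is that the pair $c,T_d(c)$ satisfies the hypotheses of Lemma~\ref{commutator}: that both are nonseparating and $i(c,T_d(c))=1$. Nonseparating-ness of $T_d(c)$ is automatic since it is the image of the nonseparating curve $c$ under a homeomorphism. For the intersection number, the upper bound $i(c,T_d(c))\le i(c,d)^2=1$ follows from the standard Dehn-twist inequality, and the lower bound follows from the homological identity $[T_d(c)]=[c]\pm[d]$, which gives algebraic intersection $|\hat\imath|(c,T_d(c))=|\hat\imath|(c,d)=1$. Hence $i(c,T_d(c))=1$ and Lemma~\ref{commutator} finishes the argument.

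I do not expect any genuine obstacle here; the whole proof is essentially a one-line conjugation trick once one notices that the condition $i(f(c),d)=0$ permits commuting $T_d$ past $T_{f(c)}$. The only point that requires a moment's care is confirming $i(c,T_d(c))=1$, but this is immediate from the two routine inequalities above.
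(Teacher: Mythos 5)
Your proof is correct, and the underlying group theory is in fact the same as in the paper, just packaged differently. The paper sets $h = [T_c,f] = T_cT_{f(c)}^{-1}$, checks $i(d,h(d)) = i(d,T_c(d)) = 1$, and invokes Lemma~\ref{wscca} for $h$ with test curve $d$; unwinding that appeal to Lemma~\ref{wscca}, the element it ultimately feeds into Lemma~\ref{commutator} is $[T_d,h] = T_dT_{T_c(d)}^{-1}$. You instead conjugate $h$ by $T_d$ and multiply by $h^{-1}$ to produce $T_{T_d(c)}T_c^{-1}$, then feed that into Lemma~\ref{commutator}. But $T_dT_{T_c(d)}^{-1}$ and $T_{T_d(c)}T_c^{-1}$ are just two factorizations of the same commutator $[T_d,h]$, so the element in the normal closure is literally identical; the difference is only which pair of curves with geometric intersection one you exhibit it as a twist difference along ($(d,T_c(d))$ versus $(T_d(c),c)$). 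What the paper's phrasing buys is brevity and modularity, since Lemma~\ref{wscca} is reused as a black box; what your version buys is that the relevant normal generator is written down explicitly. Your verification that $i(c,T_d(c))=1$ is sound: the upper bound $i(T_d(c),c)\le i(c,d)^2=1$ is the standard twist estimate, and the lower bound via $[T_d(c)]=[c]\pm[d]$ and $|\hat\imath|(c,T_d(c))=1$ is also fine (in fact the Primer's Proposition~3.4 gives equality $i(T_d(c),c)=i(c,d)^2$ outright, so you could skip the homological step).
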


If we instead have $i(c,d)=0$ and $i(f(c),d) = 1$ then we may apply Lemma~\ref{chen} to $f^{-1}$ in order to deduce that the normal closure of $f^{-1}$, hence $f$, contains the commutator subgroup of $\Mod(S_g)$.

\begin{proof}[Proof of Lemma~\ref{chen}]

The commutator $h = [T_c,f] = T_cT_{f(c)}^{-1}$ lies in the normal closure of $f$.  By the hypotheses on intersection number, we have
\[
i(d,h(d)) = i(d,T_cT_{f(c)}^{-1}(d)) = i(d,T_c(d)) = 1.
\] 
By Lemma~\ref{wscca}, the normal closure of $h$, and so also the normal closure of $f$, contains the commutator subgroup of $\Mod(S_g)$.
\end{proof}

\begin{figure}[h]
\centering
\includegraphics[scale=.325]{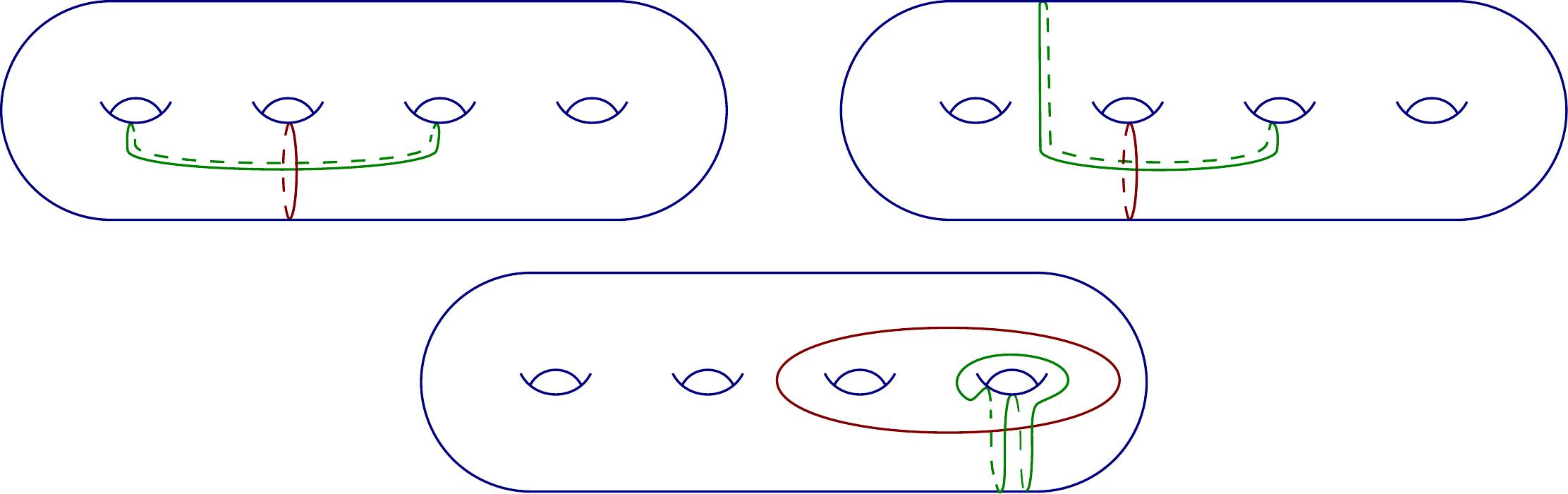}
\caption{The three possible configurations of curves $a$ and $b$ with $i(a,b) = 2$ and $[a] \neq [b] \mod 2$}
\label{fig:lantern}
\end{figure}

For the proof of Theorem~\ref{main:pa} we will require the following specific application of Lemma~\ref{chen}.

\begin{lemma}
\label{lemma:nonsep lantern}
Let $g \geq 0$ and let $f \in \Mod(S_g)$.  Suppose that there is a nonseparating curve $c$ in $S_g$ so that $i(c,f(c)) = 2$ and so that $[c] \neq [f(c)] \mod 2$.  Then the normal closure of $f$ contains the commutator subgroup of $\Mod(S_g)$.
\end{lemma}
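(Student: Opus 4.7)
The plan is to apply Lemma~\ref{chen}, so I must produce a nonseparating curve $d$ in $S_g$ with $i(c,d)=1$ and $i(f(c),d)=0$.

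First, I would classify the possible configurations of $c \cup f(c)$ in $S_g$. Because $[c] \neq [f(c)] \bmod 2$, the curves $c$ and $f(c)$ cannot cobound a subsurface, so $c \cup f(c)$ is nonseparating in $S_g$. Viewing $c \cup f(c)$ as a four-valent graph with two vertices and four edges, the regular neighborhood $N = N(c \cup f(c))$ has $\chi(N) = -2$, so $N$ is either a four-holed sphere or a twice-holed torus. Up to homeomorphism and the genera of the complementary regions there are exactly three possibilities, which are the three configurations depicted in Figure~\ref{fig:lantern}.

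Second, in each configuration I would construct $d$ by picture: choose a subarc $\alpha$ of $c$ cobounded by the two points of $c \cap f(c)$, and close $\alpha$ into a simple closed curve $d$ by appending an arc lying in $S_g \setminus N$ that avoids $f(c)$. By construction $i(c,d) = 1$ and $i(f(c),d) = 0$, and a direct inspection of Figure~\ref{fig:lantern} verifies that in each configuration $d$ can be chosen to be nonseparating, using that at least one boundary component of $N$ is nonseparating in $S_g$ (a consequence of the mod 2 homology hypothesis).

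Finally, I would apply Lemma~\ref{chen} to the pair $(c,d)$ to conclude that the normal closure of $f$ contains the commutator subgroup of $\Mod(S_g)$.

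The main obstacle is the case analysis: one must check that in each of the three configurations of Figure~\ref{fig:lantern}, the closing arc can indeed be drawn in $S_g \setminus N$ without meeting $f(c)$ and so that the resulting $d$ is nonseparating. The four-holed sphere configuration (suggested by the ``lantern'' in the figure label) is the most delicate, since the closing arc must leave $N$ through one boundary component and re-enter through another; the hypothesis $[c]\neq [f(c)] \bmod 2$ is what permits such a routing.
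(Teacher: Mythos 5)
Your overall plan matches the paper exactly: reduce to the three configurations of Figure~\ref{fig:lantern}, exhibit a curve $d$ with $i(c,d)=1$ and $i(f(c),d)=0$, and invoke Lemma~\ref{chen}. The classification sketch via $\chi(N)=-2$ and the observation that the mod $2$ hypothesis forces $c\cup f(c)$ to be nonseparating is also a reasonable gloss on why the figure is exhaustive.

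However, your construction of $d$ has a concrete gap. You build $d$ from a subarc $\alpha$ of $c$ together with a closing arc lying in $S_g\setminus N$. After pushing $\alpha$ off $c$ to one side, the push-off $\alpha'$ is disjoint from $c$; and since the closing arc lies outside $N$, it too is disjoint from $c$ (and from $f(c)$). So the resulting curve $d$ satisfies $i(c,d)=0$, not $1$. To see this concretely in the four-holed-sphere (lantern) configuration: label the four complementary regions of $c\cup f(c)$ inside $N$ as $P_1,\dots,P_4$ with $c$ separating $P_1\cup P_2$ from $P_3\cup P_4$ and $f(c)$ separating $P_1\cup P_3$ from $P_2\cup P_4$; pushing the $P_1$-$P_3$ arc of $c$ into $P_1$ places both endpoints of $\alpha'$ in $P_1$, and any closing arc that exits $N$, wanders in the complement, and returns via $\partial P_1$ never crosses $c$. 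The curve you actually want does not contain an arc parallel to $c$ at all: inside $N$ it is a short arc transverse to $c$, running from $\partial_1$ to $\partial_3$ (the two boundary components of $N$ on the same side of $f(c)$) and crossing the arc $\alpha\subset c$ exactly once, closed up by an arc in the complement of $N$. The hypothesis $[c]\neq[f(c)]\bmod 2$ is exactly what guarantees that $\partial_1$ and $\partial_3$ can be joined in $S_g\setminus N$. With this correction, the rest of your argument (checking the three configurations and applying Lemma~\ref{chen}) goes through.
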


\begin{proof}

Up to homeomorphism of $S_g$, there are only three configurations for $c$ and $f(c)$, namely, the configurations shown in Figure~\ref{fig:lantern}.  In each case there is a curve $d$ with $i(c,d)=1$ and $i(f(c),d)=0$.  The lemma thus follows from Lemma~\ref{chen}.
\end{proof}

\subsection*{The general well-suited curve criterion} Our next proposition, Proposition~\ref{prop:wscc}, is the most general statement of the well-suited curve criterion that we will give.  It implies Lemma~\ref{chen}, which in turn implies Lemmas~\ref{wscca} and~\ref{wsccb}.   Not only is Proposition~\ref{prop:wscc} the most general statement that we give, it is in a sense the most general statement possible, since it gives a necessary and sufficient condition for a mapping class to be a normal generator.

In order to state Proposition~\ref{prop:wscc}, we require a definition.  Let $\N(S_g)$ denote the graph of nonseparating curves for $S_g$, that is, the subgraph of $\C(S_g)$ spanned by the vertices corresponding to nonseparating curves.  Let $f$ be an element of $\Mod(S_g)$.  Let $\N_f(S_g)$ denote the abstract graph whose vertex set is the same as that of $\N(S_g)$ and whose edges correspond to pairs of vertices $\{c,h(c)\}$ where $h$ is conjugate to $f$; we call $\N_f(S_g)$ the \emph{graph of curves} for $f$. 

\begin{proposition}
\label{prop:wscc}
Let $g \geq 0$.  Then $f$ contains the commutator subgroup of $\Mod(S_g)$ if and only if $\N_f(S_g)$ is connected.
\end{proposition}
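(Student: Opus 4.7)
The plan is to prove both directions of the biconditional; throughout let $N$ denote the normal closure of $f$ in $\Mod(S_g)$. (If $g=0$ then $\N_f(S_g)$ is empty and the commutator subgroup is trivial, so both sides hold vacuously; I will assume $g\geq 1$.) For the implication ($\Leftarrow$), I would first record the observation underlying the warm-up lemmas: for any nonseparating curve $c$ and any mapping class $h$ conjugate to $f$, the commutator $[T_c,h]=T_cT_{h(c)}^{-1}$ lies in $N$. Given that $\N_f(S_g)$ is connected, any two nonseparating curves $c,d$ are joined by a path $c=c_0,c_1,\dots,c_n=d$ in $\N_f(S_g)$; each edge contributes a factor $T_{c_{i-1}}T_{c_i}^{-1}\in N$, and telescoping gives $T_cT_d^{-1}\in N$ for every pair of nonseparating curves. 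Specializing to $c,d$ with $i(c,d)=1$ and invoking Lemma~\ref{commutator} yields $[\Mod(S_g),\Mod(S_g)]\subseteq N$.

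For the implication ($\Rightarrow$), I would proceed in two steps. First, for every $\psi\in N$ and every nonseparating curve $c$, the curves $c$ and $\psi(c)$ lie in the same component of $\N_f(S_g)$: writing $\psi=h_1\cdots h_k$ as a product of conjugates of $f^{\pm 1}$, the successive images $(h_1\cdots h_i)(c)$ form a walk in $\N_f(S_g)$, using that an edge defined by $h_i$ conjugate to $f$ at one endpoint coincides with the edge defined by $h_i^{-1}$ conjugate to $f$ at the other. Second, $N$ acts transitively on the set of nonseparating curves: given $c$ and $d$, choose $\phi\in\Mod(S_g)$ with $\phi(c)=d$ using $\Mod(S_g)$-transitivity, then adjust by a suitable power $T_c^k$ (which fixes $c$) to obtain an element of $N$. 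The adjustment succeeds because $N\supseteq[\Mod(S_g),\Mod(S_g)]$ forces $\Mod(S_g)/N$ to be a quotient of the cyclic abelianization of $\Mod(S_g)$, and this cyclic group is generated by the class of $T_c$; hence for some $k\in\Z$ one has $\phi T_c^k\in N$ and $(\phi T_c^k)(c)=d$. Combining the two steps shows that all nonseparating curves lie in a single component of $\N_f(S_g)$.

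The main subtlety is the transitivity step, which depends on the structural fact that $\Mod(S_g)^{\mathrm{ab}}$ is cyclic and generated by the class of a nonseparating Dehn twist in every genus, including the low-genus cases $g=1,2$ where the abelianization is $\Z/12\Z$ and $\Z/10\Z$. Everything else reduces to the telescoping mechanism already at work in Lemma~\ref{chen}, here recognized as the sharp condition: the obstruction to running that mechanism in full generality is precisely the failure of $\N_f(S_g)$ to be connected.
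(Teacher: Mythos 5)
Your proof is correct and takes essentially the same route as the paper: the backward direction uses the telescoping product $\prod T_{c_{i-1}}T_{c_i}^{-1}$ along a path together with Lemma~\ref{commutator}, and the forward direction produces a path from $c$ to $d$ by factoring an element of the normal closure carrying $c$ to $d$ into conjugates of $f^{\pm 1}$. Your presentation is slightly cleaner in two small ways --- you explicitly handle conjugates of $f^{-1}$ as well as $f$ (which the paper glosses over), and you give a uniform argument for transitivity via the cyclic abelianization rather than splitting into the cases $g\geq 3$ and $g\leq 2$ --- but the underlying mechanism is identical.
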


\begin{proof}

First assume that $\N_f(S_g)$ is connected.  We would like to show that the normal closure of $f$ contains the commutator subgroup of $\Mod(S_g)$.  Let $c$ and $d$ be nonseparating curves in $S_g$ with $i(c,d)=1$.  By Lemma~\ref{wscca} it is enough to show that $T_cT_d^{-1}$ lies in the normal closure of $f$.  As $\N_f(S_g)$ is connected there is a path $c = c_0,\dots,c_n=d$  between $c$ and $d$.  For each $i$ there is a conjugate $f_i$ of $f$ with $f_i(c_i)=c_{i+1}$.  It follows that $T_{c_i}T_{c_{i+1}}^{-1} = [T_{c_i},f_i]$ lies in the normal closure of $f$.  But then the product
\[
\left(T_{c_0}T_{c_1}^{-1}\right)\left(T_{c_1}T_{c_2}^{-1}\right) \cdots \left(T_{c_{n-1}}T_{c_n}^{-1}\right)
\]
lies in the normal closure of $f$.  This product is equal to $T_{c_0}T_{c_n}^{-1}$, or $T_cT_d^{-1}$, as desired.

For the other direction, assume that the normal closure of $f$ contains the commutator subgroup of $\Mod(S_g)$.  Let $c$ and $d$ be nonseparating curves in $S_g$.  We would like to show that there is a path from $c$ to $d$ in $\N_f(S_g)$.  There is an element $h$ of the commutator subgroup of $\Mod(S_g)$ with $h(c)=d$.  Indeed, for $g \geq 3$ this follows from the fact that the commutator subgroup of $\Mod(S_g)$ is equal to $\Mod(S_g)$ and the fact that $\Mod(S_g)$ acts transitively on the vertices of $\N_f(S_g)$.  For $g$ equal to 1 or 2 we can first choose an element $h_0$ of $\Mod(S_g)$ taking $c$ to $d$, and take $h$ to be any element of the form $h=T_d^kh_0$ that lies in the commutator subgroup.

As the normal closure of $f$ contains the commutator subgroup of $\Mod(S_g)$ we have that $h = f_n \cdots f_1$ where each $f_i$ is a conjugate of $f$.  Let $c_0 = c$ and for $1 \leq i \leq n$ let
\[
c_i = f_i(c_{i-1}) = f_i \cdots f_1(c_0).
\]
Note $c_n = h(c_0)=d$.  Since each $f_i$ is a conjugate of $f$ and $f_i(c_{i-1})=c_i$ it follows that each pair of vertices $\{c_{i-1},c_i\}$ spans an edge in $\N_f(S_g)$.  So the sequence of vertices $c_0,\dots,c_n$ gives a path in $\N_f(S_g)$ from $c$ to $d$.  Since $c$ and $d$ were arbitrary, $\N_f(S_g)$ is connected.  
\end{proof}

While the reverse implication of Proposition~\ref{prop:wscc} is the one that is relevant to our applications in this paper, the forward implication also can be applied to prove the connectivity of curve graphs associated to mapping classes.  For instance, in Theorem~\ref{translation} we gave examples of normal generators for $\Mod(S_g)$ with arbitrarily large translation lengths on the curve graph $\C(S_g)$.  The edges of the corresponding curve graph $\N_f(S_g)$ thus only connect vertices that are very far apart in $\C(S_g)$.  Perhaps surprisingly, Proposition~\ref{prop:wscc} implies that $\N_f(S_g)$ is connected.  

\subsection*{Chen's lemma via the Putman trick} The well-suited curve criterion of Proposition~\ref{prop:wscc} is only as useful as our ability to show that a graph of curves $\N_f(S_g)$ is  connected.  Putman introduced an effective method for proving the connectedness of such graphs \cite[Lemma 2.1]{Putman}.  We give here a specialized version which follows immediately from Putman's lemma and suffices for our purposes. 

\begin{lemma}
\label{putman}
Suppose $G$ is a group and $X$ is a graph on which $G$ acts.  Suppose $G$ acts transitively on the vertices and that $G$ is generated by the set $\{g_i\}$.  If there is an edge $e$ of $X$ with the property that each $g_i$ fixes at least one vertex of $e$ then $X$ is connected.
\end{lemma}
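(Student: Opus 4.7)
The plan is to locate the edge $e$ in a fixed connected component of $X$ and then use the hypotheses to show that this component is $G$-invariant; since the action is vertex-transitive, this will force the component to exhaust $X$. Concretely, I would let $C$ denote the connected component of $X$ containing the edge $e$. By vertex-transitivity, the conclusion $X = C$ follows once I know $g \cdot C = C$ for every $g \in G$, and since $\{g_i\}$ generates $G$, this in turn reduces to checking $g_i \cdot C = C$ for each index $i$.

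The heart of the argument is then the following elementary observation: any graph automorphism of $X$ that fixes a vertex $v \in C$ must preserve the component $C$, because the image is connected and still contains $v$, and connected components are maximal connected subgraphs. By hypothesis each generator $g_i$ fixes some vertex $v_i$ of $e$, and $v_i \in C$ since $e \subset C$. Applying the observation to each $g_i$ therefore yields $g_i \cdot C = C$ for every $i$, and hence $G \cdot C = C$.

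To finish, I would pick any vertex $v$ of $e$ (so $v \in C$) and an arbitrary vertex $w$ of $X$; transitivity supplies $g \in G$ with $g \cdot v = w$, and then $w \in g \cdot C = C$. Thus every vertex of $X$ lies in $C$, and $X$ is connected.

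There is no real technical obstacle: the lemma is essentially a tidy packaging of the principle that a single edge ``accessible'' to every generator anchors the whole action. The substantive work, in our applications, lies not in proving Lemma~\ref{putman} itself but in exhibiting a suitable edge of $\N_f(S_g)$ and a convenient generating set of $\Mod(S_g)$ to which it applies, en route to verifying the connectedness hypothesis of Proposition~\ref{prop:wscc}.
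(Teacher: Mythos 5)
Your proof is correct and self-contained. The paper itself does not actually supply a proof of this lemma; it merely observes that the statement "follows immediately from Putman's lemma" (citing \cite[Lemma 2.1]{Putman}) and leaves the verification to the reader. Putman's lemma is a path-based criterion: given a basepoint $v$, if $X/G$ is connected and each generator $s$ admits a path from $v$ to $s\cdot v$, then $X$ is connected; here one would take $v$ to be a vertex of $e$ and note that if $g_i$ fixes the other endpoint $w$ of $e$, then $v$, $e$, $w$, $g_i\cdot e$, $g_i\cdot v$ is the required path. Your argument instead works at the level of connected components: since each $g_i$ (being an automorphism) fixes a vertex of $C$, it must stabilize $C$, whence $G$ stabilizes $C$, and vertex-transitivity forces $C$ to be all of $X$. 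Both routes are short, but yours avoids invoking the external lemma and any explicit path bookkeeping; what Putman's formulation buys in general is a criterion that does not assume vertex-transitivity and so applies to complexes with several orbits of vertices, a generality not needed for the specialized statement here.
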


We can use this lemma and Proposition~\ref{prop:wscc} to prove Lemma~\ref{chen} as follows.  Suppose $f$ is an element of $\Mod(S_g)$ and $c$ and $d$ are nonseparating curves in $S_g$ so that $i(c,d)=1$ and $i(f(c),d) = 0$.  We can choose a generating set for $\Mod(S_g)$ consisting of Dehn twists about curves $c_1,\dots,c_{2g+1}$, where $c_1 = c$, $c_2 = d$, and all other $c_i$ are disjoint from $c$.  Let $e$ be the edge of $\N_f(S_g)$ corresponding to $\{c,f(c)\}$.  Since $T_{c_2}(f(c)) = f(c)$ and $T_{c_i}(c) = c$ for all $i \neq 2$, it follows from Lemma~\ref{putman} that $\N_f(S_g)$ is connected, and so Chen's lemma follows from Proposition~\ref{prop:wscc}.


\section{Curve configurations for mod 2 homologous curves}
\label{sec:configs}

In this section we will provide one of the main tools for dealing with Case 3 of the proof of Theorem~\ref{main:pa}, as outlined in Section~\ref{sec:int}. Specifically, we produce a list of configurations of curves $(\ga,\de,\ep)$.   This list  contains all triples of curves $(c, f(c),f^2(c))$ in $S_g$ where $i(c,f(c))=i(f(c),f^2(c)) \leq 2$ and $i(c,f^2(c)) \leq 2$ and where all three curves are homologous mod 2.  

In the remainder of this section (and this section only) we will use the word ``curve'' to mean a simple closed curve, as opposed to a homotopy class of curves.  To emphasize this point, we will use Greek letters to denote curves in this section.  For a surface $S$ and a curve $\alpha$ we will write $S \cut \alpha$ for the surface obtained from $S$ by cutting along $\alpha$.

The larger set of triples of curves that we will classify is the set of triples of curves $(\ga,\de,\ep)$ in any $S_g$ with the following properties:
\begin{itemize}
\item[$\cdot$] all three curves are pairwise homologous mod 2,
\item[$\cdot$] all three curves are pairwise homotopically distinct and in minimal position,
\item[$\cdot$] all three curves have pairwise intersection number at most 2,
\item[$\cdot$] $\ga \cup \de$ and $\de \cup \ep$ have the same number of complementary regions,
\item[$\cdot$] $i(\ga,\de)=i(\de,\ep)$, and
\item[$\cdot$] $|\hat\imath|(\ga,\de)=|\hat\imath|(\de,\ep)$.
\end{itemize}
(We give a more concise description of these triples below.)   The triples $(c,f(c),f^2(c))$ described at the start of the section satisfy the properties listed here.  The set of triples $(\ga,\de,\ep)$, however, is a priori a larger, since we do not make the assumption that the pair $(\gamma,\delta)$ is equivalent to the pair $(\delta,\epsilon)$ under a homeomorphism of $S_g$. 

The main goal of this section is to show that any triple $(\ga,\de,\ep)$ that satisfies this list of properties is (up to stabilization, defined below) one of the triples shown in Figures~\ref{fig:I}, \ref{fig:II}, \ref{fig:III}, or~\ref{fig:IV}.  This classification is proved as Propositions~\ref{prop:I}, \ref{prop:II}, \ref{prop:III}, and~\ref{prop:IV} below.

The strategy for classifying such triples $(\ga,\de,\ep)$ is as follows.  First, there are only four configurations for the pair $(\ga,\de)$ in $S_g$, up to changing the genus of the complementary regions.  The four corresponding ``templates'' are shown in Figure~\ref{fig:configs}.  The goal is then to find all candidate third curves $\ep$, using the hypotheses on the curves.  To do this, we first draw on each of the templates the finitely many candidates for $\ep$ up to homeomorphism.  We then enumerate all the ways of adding handles to each such configuration so that the resulting configuration has the desired properties.  Up to changing the genus of the complementary regions, there are finitely many such ways to add the handles.  

\subsection*{Four configurations for pairs of mod 2 homologous curves} We first give names to the various types of configurations of nonseparating curves $\ga$ and $\de$ with $\ga$ and $\de$ in minimal position, with $|\ga \cap \de| \leq 2$, with $\ga$ and $\de$ homotopically distinct, and with $[\ga]=[\de] \mod 2$.  There are four types, as in the following chart.

\medskip

\renewcommand{\arraystretch}{1.35}
\begin{center}
 \begin{tabular}{|c|c|c|c|c|}
   \hline
   Type & $|\ga \cap \de|$ &  $|\hat\imath|(\ga,\de)$ & $[\ga]=[\de]$ & $[\ga]=[\de] \mod 2$ \\ \hline 
   I & 0 &  0 & \cmark & \cmark \\ \hline
   II & 2 & 0 & \cmark & \cmark \\ \hline
   III & 2 & 0 & \xmark & \cmark \\ \hline
   IV & 2 & 2 & \xmark & \cmark \\
   \hline
 \end{tabular}
\end{center}

\medskip

We emphasize that the type of a pair of curves is independent of the order of the two curves.

In what follows it will be convenient to depict the different types of configurations by drawing the surface obtained by cutting $S_g$ along the curve $\de$.  The cut surface $S_g \cut \delta$ is connected and has two boundary components, and on this surface the curve $\ga$ either becomes a curve (type I) or a pair of arcs (types II, III, and IV).  See Figure~\ref{fig:configs} for sample configurations of the four types.  The picture for type II is drawn in such a way as to suggest an analogy with types III and IV.  

In Figure~\ref{fig:configs} the identification of the two components of the boundary of $S_g \cut \delta$ is indicated by the pair of black dots.  That is, the two boundary components are glued so that the black dots are identified in the closed surface.  In the pictures for types II, III, and IV the identification of the two boundary circles can be achieved by a vertical translation followed by a reflection about the plane passing through $\ga$.  In what follows we will draw configurations of pairs $(\ga,\de)$ and triples $(\ga,\de,\ep)$ using similar pictures, and in those pictures we will always use this identification of the boundary components of $S_g \cut \delta$.  There are only two choices for the identification so that $\ga$ is a union of curves in the closed surface, and the identification we have specified is the only one that makes $\ga$ a connected curve.

Given a configuration of curves on a surface, we may perform a \emph{stabilization}.  That is, we may delete the interiors of two disjoint disks that lie in the same component of the complement of the configuration and then identify the two new boundary components.  A sequence of such stabilizations is also called a stabilization.  If a pair of curves is of type I, II, III, or IV, then the stabilized configuration is of the same type.

The sample configurations in Figure~\ref{fig:configs} are \emph{minimal} in the sense that any pair of curves of a given type can be obtained from the corresponding sample configuration by stabilization.

\begin{figure}[h!]
\qquad \includegraphics[width=.9\textwidth]{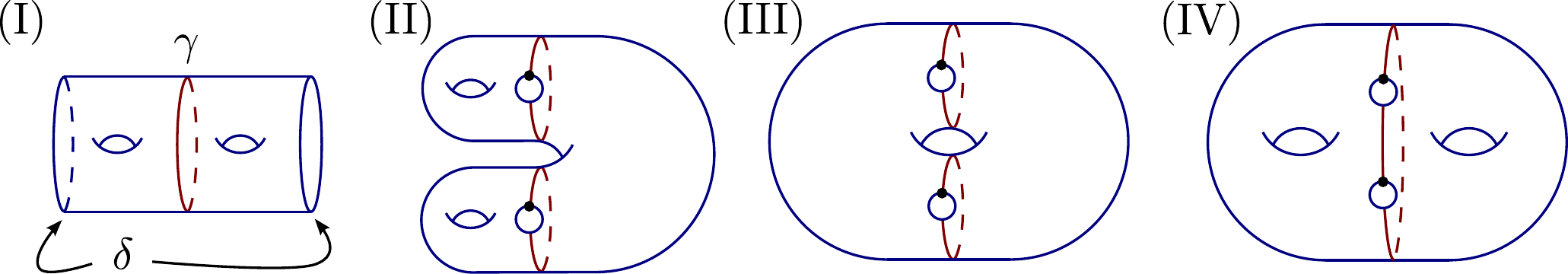}
\caption{The minimal configurations of nonseparating curves $\ga$ and $\de$ with $i(\ga,\de) \leq 2$ and $[\ga]=[\de] \mod 2$; in each picture the surface is cut along $\de$}
\label{fig:configs}
\end{figure}

\subsection*{Configurations of triples of mod 2 homologous curves} Our main goal in this section is to classify certain types of configurations of ordered triples of curves $(\ga,\de,\ep)$.  Before we embark on this classification we would to rephrase the goal in terms of the four types of pairs of curves described above.

Suppose that $(\ga,\de,\ep)$ is an ordered triple of curves where each pair of curves has type I, II, III, or IV (in particular each pair is in minimal position).  We will say that the triple of curves is of type I, II, III, or IV if all three curves are pairwise non-homotopic and both pairs $\{\ga,\de\}$ and $\{\de,\ep\}$ are of the given type.  The motivation here is that in the proof of Theorem~\ref{main:pa} we will be interested in triples of the form $(c,f(c),f^2(c))$ and in this case the pair $\{c,f(c)\}$ is of the same type as the pair $\{f(c),f^2(c)\}$ (the pair $\{c,f^2(c)\}$ might not be of the same type as the other two pairs).

It will continue to be convenient to draw our configurations on a cut surface $S_g \cut \de$.  On $S_g \cut \de$ each of the curves $\ga$ and $\ep$ becomes a curve or a pair of arcs, just as in Figure~\ref{fig:configs} (again, up to stabilization).  Moreover, since $\{\ga,\de\}$ has the same type as $\{\de,\ep\}$ (and since the type of a pair is independent of the order of the two curves) the $\ga$-curve/arcs and the $\ep$-curve/arcs on $S_g \cut \de$ are of the same type, in that they correspond to the same picture in Figure~\ref{fig:configs}.

We may now restate the main result of this section as follows:

\medskip

\begin{quote}
\emph{Every ordered triple of curves of type I, II, III, or IV is a stabilization of one of the minimal configurations given in Figures~\ref{fig:I}, \ref{fig:II}, \ref{fig:III}, or \ref{fig:IV}, respectively.
}
\end{quote}

\medskip

The four different types of triples are addressed in Propositions~\ref{prop:I},  \ref{prop:II}, \ref{prop:III}, and~\ref{prop:IV}, respectively.   Again, these classifications will be used in the proof of Theorem~\ref{main:pa} in Section~\ref{sec:pA}.  

\begin{figure}
\centering
\includegraphics[scale=.45]{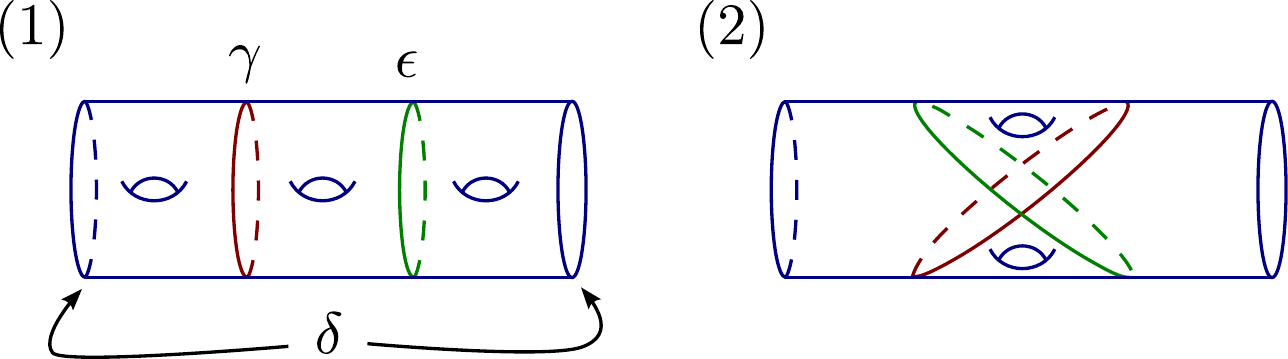}
\caption{The two minimal configurations of ordered triples of curves of type I, cut along $\delta$}
\label{fig:I}
\end{figure}

\subsection*{Triples of type I} We can right away classify triples of type I.  This case is much simpler than the cases of triples of types II, III, and IV.

\begin{proposition}
\label{prop:I}
Up to homeomorphism, every ordered triple of curves of type I in $S_g$ is a stabilization of one of the minimal configurations in Figure~\ref{fig:I}.
\end{proposition}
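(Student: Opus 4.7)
The plan is to split the analysis into two subcases according to the geometric intersection number $i(\gamma,\epsilon)$ of the outer pair. Since $[\gamma]=[\delta]$ and $[\delta]=[\epsilon]$ integrally, the outer pair is integrally homologous, so $|\hat\imath|(\gamma,\epsilon)=0$. Combined with the hypothesis $i(\gamma,\epsilon)\leq 2$ and the fact that $i$ and $|\hat\imath|$ agree modulo $2$, this forces $i(\gamma,\epsilon)\in\{0,2\}$, and these two subcases correspond to the two minimal configurations of Figure~\ref{fig:I}.

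In the subcase $i(\gamma,\epsilon)=0$, all three curves are pairwise disjoint, pairwise integrally homologous, nonseparating, and pairwise non-homotopic. The standard count that the number of components of $S_g\setminus(\gamma\cup\delta\cup\epsilon)$ equals $4-r$, where $r$ is the rank of the image of $H_1(\gamma\cup\delta\cup\epsilon;\mathbb{Z}/2)\to H_1(S_g;\mathbb{Z}/2)$, yields exactly three complementary regions, since $r=1$. Because any two of the curves cobound a subsurface, each of the three regions is cobounded by a distinct pair. Each region has two boundary components and genus at least one (else two of the curves would be parallel), and the three genera sum to $g-1$ by an Euler characteristic computation. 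The minimal case has each region of genus one, requiring $g=4$; this is the first minimal configuration in Figure~\ref{fig:I}, and every other triple in this subcase is obtained by stabilizing within the three regions.

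In the subcase $i(\gamma,\epsilon)=2$, I would cut $S_g$ along $\delta$ and analyze the resulting picture on $S_g\cut\delta$. There $\gamma$ and $\epsilon$ are simple closed curves that each separate the two new boundary components (since each is integrally homologous to $\delta$, hence to a boundary class in $S_g\cut\delta$) and that meet each other in two points with algebraic intersection zero. A direct case analysis, which reduces to the classification of type II pairs already recorded in Figure~\ref{fig:configs}, shows that up to homeomorphism there is a unique minimal arrangement; regluing along $\delta$ gives the second minimal configuration in Figure~\ref{fig:I}. Any other triple in this subcase is obtained by stabilization in a complementary region. The key step is the reduction to the type II pair picture; once that is in hand, the remaining verification is a finite combinatorial check, making this by far the easiest of the four propositions in this section.
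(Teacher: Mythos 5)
Your proof is correct, and it ultimately rests on the same key observation as the paper's --- that after cutting along $\delta$, the curves $\gamma$ and $\epsilon$ each separate the two boundary circles of $S_g \cut \delta$ and intersect in at most two points --- but the presentation is noticeably more elaborate. The paper handles both subcases at once: it cuts along $\delta$, notes that $\gamma$ and $\epsilon$ separate the boundary components, and reads off the two configurations of Figure~\ref{fig:I} directly from the fact that two such curves meeting in at most two points have only two possible arrangements up to stabilization. You instead make the split $i(\gamma,\epsilon) \in \{0,2\}$ explicit (correctly derived from integral homology and parity), and for $i(\gamma,\epsilon)=0$ you work in the closed surface, counting complementary regions via the rank formula and bounding the genera of the three regions; this is sound, and your computation that the minimal case sits in $S_4$ is right, but the cut-along-$\delta$ picture gives the same conclusion with less machinery. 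For $i(\gamma,\epsilon)=2$ you do cut along $\delta$ as the paper does, though the phrase ``reduces to the classification of type II pairs'' is slightly off-target: that classification (Figure~\ref{fig:configs}) is for a pair of curves in a closed surface, whereas what you actually need is the analysis of two curves in a surface with two boundary circles, each separating the boundary circles, meeting twice --- the combinatorics is the same, but it is not literally the earlier classification. In short: same underlying idea, somewhat heavier execution, with one imprecise citation that does not affect the validity of the argument.
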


\begin{proof}

Let $(\ga,\de,\ep)$ be an ordered triple of curves of type I in $S_g$.  Consider the cut surface $S_g \cut \de$.  The curves $\ga$ and $\ep$ correspond to curves in $S_g \cut \de$ that separate the two boundary components from each other.  Since $\ga$ and $\ep$ are distinct and since they intersect in at most two points, it follows that the triple $(\ga,\de,\ep)$ is given by a stabilization of one of the two configurations in Figure~\ref{fig:I}.   
\end{proof}

\subsection*{Templates} In Propositions~\ref{prop:II}, \ref{prop:III}, and~\ref{prop:IV} below we will classify ordered triples of curves of type II, III, and IV.  We will do this by first listing simpler versions of each type of configuration, called templates.

Before we define templates, we first say what it means to add handles to a configuration of curves.  Let $\ga_1,\dots,\ga_k$ be a collection of curves in a surface.  To add handles to this configuration, we delete the interiors of an even number of disjoint disks in the complement of $\cup \ga_i$ and glue the resulting boundary components in pairs.  The order in which handles are added does not affect the final configuration.  

Let $(\ga,\de,\ep)$ be an ordered triple of curves of type II, III, or IV in $S_g$.  Set $S = S_1$ if the type is II or IV and set $S=S_2$ if the type is III (where $S_1$ and $S_2$ denote the closed surfaces of genus 1 and 2, respectively).  A \emph{template} for $(\ga,\de,\ep)$ is an ordered triple of curves $(\ga_0,\de_0,\ep_0)$ in $S$ so that the following conditions hold:
\medskip
\begin{enumerate}[itemsep=1.5ex]
\item $\ga_0$ and $\de_0$ are configured in $S$  as in Figure~\ref{fig:configs template}, and
\item the triple $(\ga,\de,\ep)$ is obtained from $(\ga_0,\de_0,\ep_0)$ by adding handles. 
\end{enumerate}
\medskip
It follows from the second condition that the pairwise intersection numbers and algebraic intersection numbers of $\ga_0$, $\de_0$, and $\ep_0$ are inherited from those of $\ga$, $\de$, and $\ep$.

\begin{figure}[h!]
\qquad \includegraphics[scale=.4]{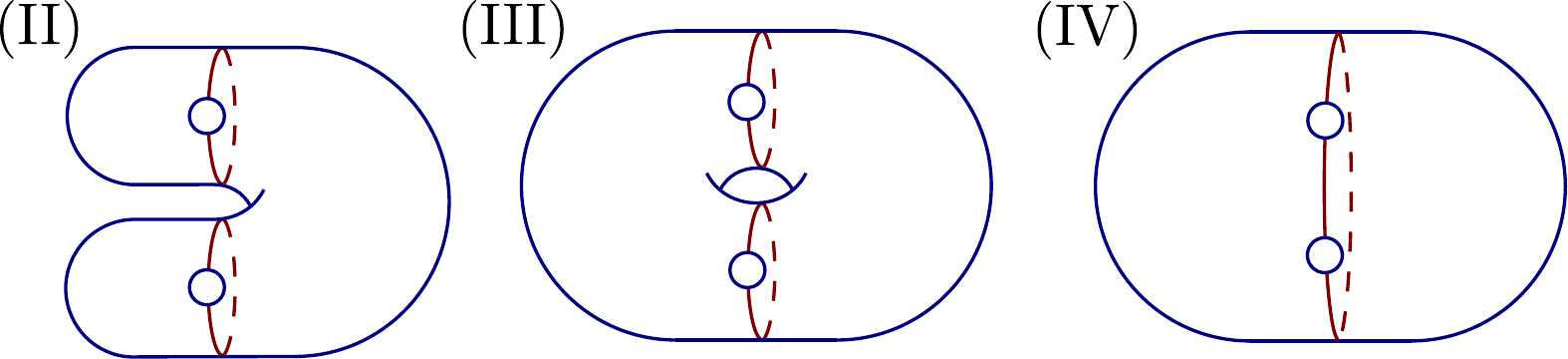}
\caption{The configurations $\{\ga_0,\de_0\}$ in the templates of type II, III, and IV; the surfaces are cut along $\de_0$ and the resulting boundary components are identified as in Figure~\ref{fig:configs}}
\label{fig:configs template}
\end{figure}

\begin{figure}
\centering
\includegraphics[width=.75\textwidth]{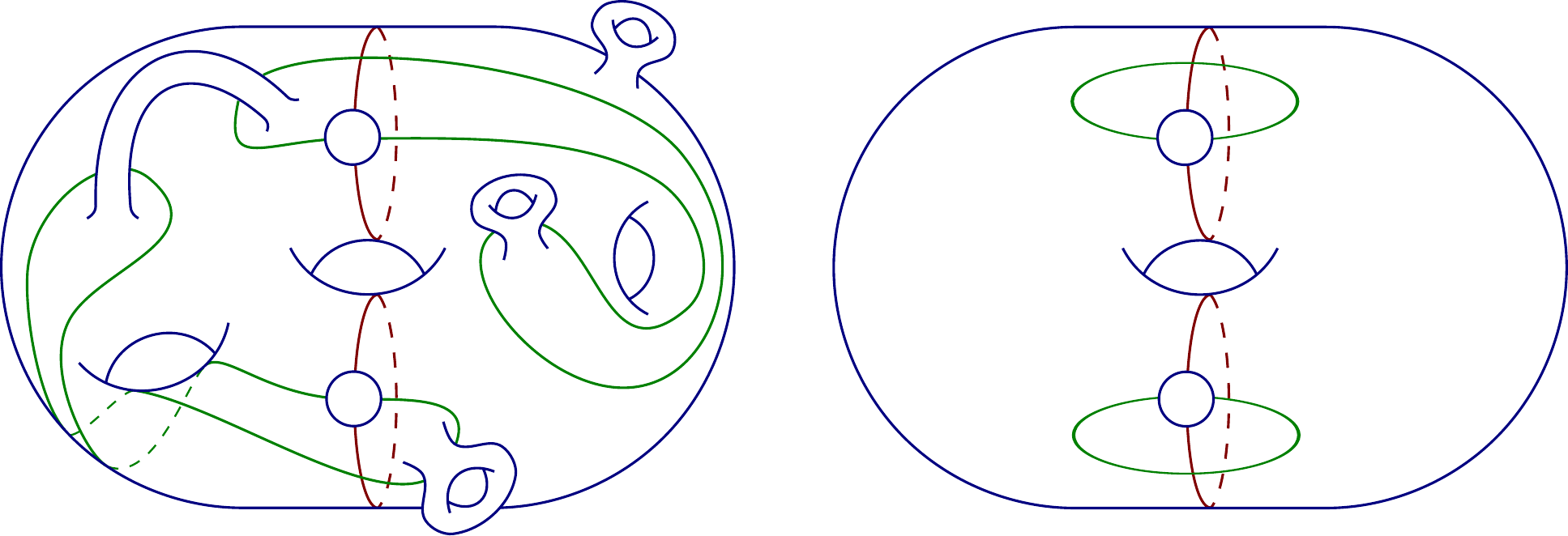}
\caption{An example of an ordered triple of curves of type III (left) and its underlying template (right)}
\label{fig:example}
\end{figure}

As one example, the configuration shown on the right-hand side of Figure~\ref{fig:example} is a template in $S_2$ for the triple of curves of type III in $S_9$ shown on the left-hand side of the figure. 

We note that the triples of curves in a template might fail to be a triple of type I, II, III, or IV in several ways: they might not be in minimal position, they might not be homotopically distinct, and the pair $\{\de_0,\ep_0\}$ might not have the same type as $\{\de,\ep\}$.  In the example of Figure~\ref{fig:example}, the triple of curves in the template fails in all three ways.

\begin{lemma}
\label{templates}
Every ordered triple of curves of type II, III, or IV in $S_g$ has a template in $S_1$, $S_2$, or $S_1$, respectively.
\end{lemma}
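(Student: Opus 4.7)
The plan is to construct the template $(\ga_0, \de_0, \ep_0)$ directly in $S$, by first identifying a regular neighborhood of $\ga \cup \de$ in $S_g$ with its counterpart in $S$ and then choosing $\ep_0$ in the complement so as to match the combinatorial pattern of $\ep$ up to handle stabilization.

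First I extract the minimal pair: by the definition of types II, III, and IV, the pair $(\ga, \de)$ is a stabilization of the minimal configuration $(\ga_0, \de_0)$ shown in Figure~\ref{fig:configs template}, so there is a homeomorphism $\Phi \colon N(\ga \cup \de) \to N(\ga_0 \cup \de_0)$ taking $\ga$ to $\ga_0$ and $\de$ to $\de_0$. The complement $S_g \setminus N(\ga \cup \de)$ is a disjoint union $C_1 \sqcup \cdots \sqcup C_m$ of connected compact surfaces with one boundary circle each, while $S \setminus N(\ga_0 \cup \de_0)$ is a disjoint union $D_1 \sqcup \cdots \sqcup D_m$ of disks with $\partial C_i$ identified with $\partial D_i$ via $\Phi$; the total extra genus of the $C_i$ over the $D_i$ equals $g - g_S$, where $g_S = 1$ for types II and IV and $g_S = 2$ for type III.

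To construct $\ep_0$, I note that the arcs $\ep \cap C_i$ are disjoint simple arcs whose endpoints on $\partial C_i$ correspond via $\Phi$ to marked points on $\partial D_i$. In each $D_i$ I select disjoint simple arcs realizing the same endpoint pairing as $\ep \cap C_i$, and I take $\ep_0$ to be the union of these arcs together with $\Phi(\ep \cap N(\ga \cup \de))$. Condition (1) of the template definition then holds by construction, and condition (2) follows because each piece of $C_i$ cut by the arcs of $\ep$ is obtained from the corresponding piece of $D_i$ cut by the arcs of $\ep_0$ by adding handles in the complement, so performing all of these handle additions on $S$ reproduces $(\ga, \de, \ep)$ in $S_g$.

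The main obstacle is the selection of planar arcs in each $D_i$: disjoint simple arcs in a disk must induce a non-crossing pairing of their endpoints on the boundary, but a priori the pairing forced by $\ep \cap C_i$ may be non-planar when $C_i$ has positive genus. To resolve this, I use the flexibility that $\ep_0$ need only produce a curve isotopic to $\ep$ in $S_g$ after the handles are added---it need not match $\ep$ arc-by-arc---so one may re-route the arcs of $\ep$ through the extra handles of the $C_i$ until the induced pairings on the $\partial C_i$ become non-crossing. Once planar pairings are achieved, the arc-selection in each $D_i$ proceeds uniquely up to isotopy in the disk, and the construction of the template is complete.
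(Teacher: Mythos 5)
Your plan—identifying $N(\ga\cup\de)$ with $N(\ga_0\cup\de_0)$ and then filling in $\ep_0$ in the complementary regions—is close in spirit to the paper's proof, which also reduces to studying the arcs of $\ep$ in the regions complementary to $\ga\cup\de$ and then cutting those regions down to the template surface. However, your argument has a genuine gap at exactly the point you flag as "the main obstacle."

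You correctly observe that the $\ep$-arcs in a component $C_i$ might have endpoints that alternate (link) on a boundary circle of $C_i$, and that a linked pairing cannot be realized by disjoint arcs in a planar region. But your proposed fix—\emph{re-route the arcs of $\ep$ through the handles of $C_i$ until the pairing is non-crossing}—cannot work. The cyclic order of the endpoints of the $\ep$-arcs on $\partial C_i$ is dictated by the intersection pattern of $\ep$ with $\ga$ and $\de$ (which lives in $N(\ga\cup\de)$ and is carried over by $\Phi$), and it is an isotopy invariant: no isotopy of $\ep$ in $S_g$ that keeps $i(\ep,\ga)$ and $i(\ep,\de)$ minimal can change it. Changing the pairing would produce a curve $\ep'$ that is genuinely not isotopic to $\ep$, so $(\ga_0,\de_0,\ep_0)$ would then be a template for the wrong triple. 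What is actually needed—and what you never use—is the hypothesis that all three curves are homologous mod~2. The paper shows that a linked pairing would yield a curve in $S_g$ disjoint from $\ga$ and $\de$ but crossing $\ep$ exactly once, contradicting $[\ep]=[\ga] \bmod 2$. That is: the linking obstruction \emph{never occurs}, rather than being something one can engineer away.

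A second, related gap: even with unlinked pairings, to verify condition~(2) of the template definition you must argue that $\ep$ in $C_i$ is actually recoverable from some $\ep_0$ in $D_i$ by adding handles in the complement of $\ep_0$. Your closing sentence asserts this but does not prove it; it requires knowing that the extra genus of $C_i$ is supported away from the $\ep$-arcs. The paper secures exactly this by observing that a regular neighborhood of ($\ep$-arcs $\cup\ \partial R_i$) is planar—again using the unlinking from the mod~2 hypothesis—so that one may cut $R_i$ along curves disjoint from $\ep$ to reach genus $0$. Finally, a smaller inaccuracy worth correcting: the $C_i$ need not have a single boundary circle, and the $D_i$ in the template surface are not all disks (e.g.\ for types II and III some complementary regions are annuli), so "the arc-selection in each $D_i$ proceeds uniquely up to isotopy" is not correct and indeed the paper shows there are several templates of each type.
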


\begin{proof}

For concreteness we state the proof in the case of a triple of type III; the proofs for triples of type II and IV are similar.  Let $(\ga,\de,\ep)$ be an ordered triple of curves in $S_g$ of type III.  The configuration $\{\ga,\de\}$ in $S_g$ is a stabilization of the standard pair of type III shown in Figure~\ref{fig:configs}.  The curves $\ga$ and $\de$ divide $S_g$ into two components, which we will refer to as $R_1$ and $R_2$.  Each is a surface with two boundary components.  Since $\{\de,\ep\}$ is of type III, the curve $\ep$ gives rise to collections of arcs in $R_1$ and $R_2$.  Since $|\ga \cap \ep|$ and $|\de \cap \ep|$ are bounded above by 2, the intersection of $\ep$ with each $R_i$ is at most two arcs.

Fix $i \in \{1,2\}$.  If there are two arcs of $\ep$ in $R_i$ and both arcs have both their endpoints on the same component of the boundary of $R_i$, then these endpoints are unlinked on the boundary, in that they do not alternate.  Indeed, if there were two arcs of $\ep$ in $R_i$ that were linked on the boundary then we could find a curve in $S_g$ that is disjoint from $\ga$ and $\de$, intersects one of these arcs of $\ep$ in one point, and is disjoint from $\ep$ otherwise.  This contradicts the assumption that $\ep$ is homologous to $\ga$ and $\de$ mod 2.  

Consider a regular neighborhood of the union of the arcs of $\ep$ in $R_1$ with the boundary of $R_1$.  By the previous paragraph, this neighborhood has genus 0.  It follows that there is a collection of curves in $R_1$ so that when we cut $R_1$ along these curves we obtain a connected surface of genus 0.  By the same argument there is a similar collection of curves in $R_2$.  Cutting $S_g$ along all of these curves and gluing disks to the resulting boundary components, we obtain a surface of genus 2.  The curves arising from $\ga$ and $\de$ are configured in $S_2$ like the curves $\ga_0$ and $\de_0$ in Figure~\ref{fig:configs template}.  By construction the configuration of curves in $S_2$ is a template for the original triple $(\ga,\de,\ep)$ in $S_g$.  
\end{proof}

In the process of classifying triples of curves of types II, III, and IV, we will start in each case by classifying all templates of that type.  The next lemma gives a condition that a configuration of curves must satisfy in order to be a template.  

\begin{lemma}
\label{prune sep}
Suppose $\ga_0$, $\de_0$, and $\ep_0$ are curves giving a template for an ordered triple of curves $(\ga,\de,\ep)$ in $S_g$ of type II, III, or IV.  Then $\ep_0$ is a nonseparating curve in the underlying surface of the template.
\end{lemma}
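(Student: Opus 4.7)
The plan is to argue by contradiction. Assume that $\ep_0$ is separating in the template surface $S$ (which is $S_1$ for types II and IV, and $S_2$ for type III); then $[\ep_0]_S=0$ in $H_1(S;\Z/2)$. Since the triple $(\ga,\de,\ep)$ has type II, III, or IV, the pair types force all three curves to be pairwise mod 2 homologous in $S_g$, and in particular $[\ga]_{S_g}=[\ep]_{S_g}$ in $H_1(S_g;\Z/2)$. The goal is to transfer this relation to $H_1(S;\Z/2)$ via an explicit map $\pi\colon S_g\to S$ and obtain a contradiction with the fact that $\ga_0$ is nonseparating in $S$.

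To construct $\pi$, use the template relation to write $S_g=S^*\cup H$, where $S^*$ is $S$ with $2k$ open disks $D_1^\pm,\dots,D_k^\pm$ removed and $H$ is the disjoint union of $k$ cylindrical handles attached in pairs along $\partial S^*$. Note that $\ga_0,\de_0,\ep_0$ lie inside $S^*$ and coincide as curves with $\ga,\de,\ep$. Define $\pi$ to be the identity on $S^*\subset S$. On each handle $H_i\cong S^1\times[0,1]$, define $\pi$ to cone $S^1\times\{0\}$ to the center of $D_i^+$ for $t\in[0,1/3]$, traverse a chosen arc in $S$ from the center of $D_i^+$ to the center of $D_i^-$ for $t\in[1/3,2/3]$, and cone the center of $D_i^-$ to $S^1\times\{1\}$ for $t\in[2/3,1]$. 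This prescription agrees with the identity on the two attaching circles, so $\pi$ is continuous across $\partial S^*$.

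Because $\pi$ restricts to the identity on $S^*$, any cycle $z$ supported in $S^*$ satisfies $\pi_*([z]_{S_g})=[z]_S$ in $H_1(S;\Z/2)$. Applying $\pi_*$ to the equation $[\ga]_{S_g}=[\ep]_{S_g}$ therefore yields $[\ga_0]_S=[\ep_0]_S$, and combining with the contradiction hypothesis $[\ep_0]_S=0$ forces $[\ga_0]_S=0$. But in each of the three standard configurations of Figure~\ref{fig:configs template}, $\ga_0$ is manifestly nonseparating in the template surface, so $[\ga_0]_S\neq 0$, since a simple closed curve on an orientable surface is null-homologous if and only if it is separating. This is the desired contradiction. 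The only real step requiring any verification is the continuity of $\pi$, which is immediate from the construction. (In type IV one may also give a one-line direct argument: adding handles preserves algebraic intersection numbers, so $|\hat\imath|(\de_0,\ep_0)=|\hat\imath|(\de,\ep)=2\neq 0$, which is impossible for a separating $\ep_0$.)
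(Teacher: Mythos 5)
Your proof is correct, and it takes a genuinely different route from the paper's argument. The paper argues concretely: since $\ep$ is nonseparating in $S_g$, at least one added handle must connect the two sides of the (hypothetically separating) $\ep_0$; then, using that $\de_0$ is nonseparating in the template surface, one finds a path from one end of that handle to the other avoiding $\de_0$, whose concatenation with the handle gives a closed curve in $S_g$ that misses $\de$ but meets $\ep$ an odd number of times, contradicting the assumption that $\de$ and $\ep$ are mod $2$ homologous. You instead construct a continuous ``collapsing'' map $\pi\colon S_g\to S$ that is the identity on $S^*$ and sends each attached cylinder into $S$ (filling the deleted disks and traversing connecting arcs), and then invoke functoriality of $H_1(\cdot;\Z/2)$: since $\ga$, $\ep$ are supported in $S^*$ and $\pi$ restricts to the identity there, $\pi_*$ carries $[\ga]_{S_g}=[\ep]_{S_g}$ to $[\ga_0]_S=[\ep_0]_S$, forcing $[\ga_0]_S=0$ and contradicting that $\ga_0$ is nonseparating in all three template configurations. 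In effect, the paper uses the nonseparatingness of $\de_0$ and an explicit curve construction, while you use the nonseparatingness of $\ga_0$ and the homological transfer map; the former is more geometric and elementary, the latter is cleaner and packages the ``test curve'' argument into functoriality. Your parenthetical shortcut for type~IV (algebraic intersection number is preserved under adding handles, and a separating curve has algebraic intersection number zero with everything) is also correct.
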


\begin{proof}

Suppose for contradiction that $\ep_0$ is a separating curve in the template.  Because $\ep$ is nonseparating in $S_g$ it must be that the ordered triple in $S_g$ is obtained from the template by adding at least one handle to the template that connects the two sides of $\ep_0$.  The three curves of the template cut the underlying surface into regions.  Say that the added handle connects regions $R_1$ and $R_2$.  Since $\de_0$ is nonseparating, there exists a path $\alpha$ from $R_1$ to $R_2$ that avoids $\de_0$.    The concatenation of $\alpha$ with a path along the added handle thus gives rise to a curve in $S_g$ that intersects $\ep_0$ in a single point and is disjoint from $\de_0$.  This contradicts the assumption that $\ep$ and $\de$ are homologous mod 2.  
\end{proof}

\subsection*{Linking} Let $(\ga,\de,\ep)$ be an ordered triple of curves in $S_g$ of type II, III, or IV.  Since $|\ga \cap \de|= |\de \cap \ep|=2$, each of $\ga \cap \de$ and $\ep \cap \de$ is a pair of points on $\de$.  Because there are no triple intersections, these four points are distinct and so the two pairs can be either linked or unlinked.  As such we may say that $\ga$ and $\ep$ are \emph{linked} or \emph{unlinked} along $\de$.  In our classification of triples of type II, III, and IV we will use this notion to distinguish between various cases.  

\begin{figure}
\centering
\includegraphics[width=.9\textwidth]{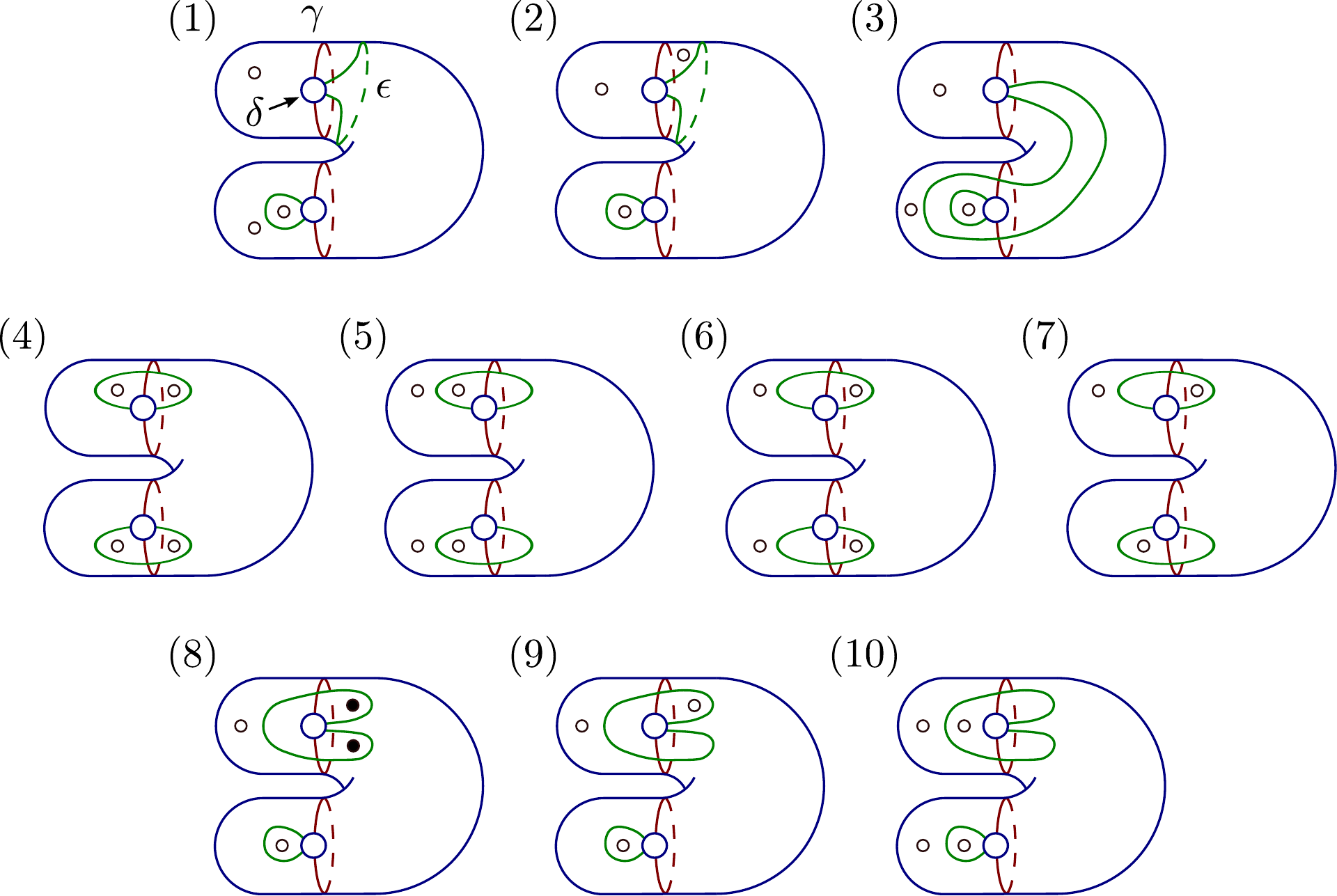}
\caption{The ten minimal configurations of triples of type II; a white dot indicates a handle attached within a single region and a pair of black dots indicates a handle connecting two distinct regions
 }
\label{fig:II}
\end{figure}

\subsection*{Triples of type II} Having established the preliminaries about templates, we now begin the process of classifying ordered triples of types II, III, and IV.  We first classify ordered triples of type II.  

\begin{proposition}
\label{prop:II}
Up to homeomorphism, every ordered triple of curves of type II in $S_g$ is a stabilization of one of the minimal configurations in Figure~\ref{fig:II}.
\end{proposition}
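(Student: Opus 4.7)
The plan is to apply Lemma~\ref{templates} to reduce to a classification of templates $(\ga_0,\de_0,\ep_0)$ in $S_1$, and then to determine for each template which handle additions produce a bona fide triple of type II. Since adding handles preserves pairwise geometric and algebraic intersection numbers as well as homology classes, the template inherits the constraints $|\ga_0 \cap \de_0| = |\de_0 \cap \ep_0| = 2$, $|\hat\imath|(\ga_0,\de_0) = |\hat\imath|(\de_0,\ep_0) = 0$, $[\ga_0] = [\de_0] = [\ep_0]$ in $H_1(S_1;\Z)$, and $|\ga_0 \cap \ep_0| \leq 2$ with $[\ga_0] = [\ep_0]$ mod $2$. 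By Lemma~\ref{prune sep} the curve $\ep_0$ is nonseparating in $S_1$, and by the definition of template the pair $(\ga_0,\de_0)$ is fixed in the position shown in Figure~\ref{fig:configs template}.

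Next I would enumerate the possible positions of $\ep_0$ by cutting along $\de_0$. In the annulus $A = S_1 \cut \de_0$, the curve $\ga_0$ appears as the two standard arcs of Figure~\ref{fig:configs template}, and $\ep_0$ appears as another pair of arcs with four endpoints on $\partial A$. The endpoint data for $\ep_0$ splits the analysis according to whether $\ga$ and $\ep$ are linked or unlinked along $\de$, and according to whether $|\ga_0 \cap \ep_0|$ is $0$ or $2$. Within each branch I would list the isotopy classes of pairs of arcs in $A$ compatible with the prescribed endpoint pattern and intersection number with $\ga_0$ such that the reconstructed curve $\ep_0$ in $S_1$ is nonseparating. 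For each surviving template, identify the complementary regions of $\ga_0 \cup \de_0 \cup \ep_0$ in $S_1$ and determine which handle additions are needed to make the upgraded triple $(\ga,\de,\ep)$ consist of three pairwise non-homotopic nonseparating curves in $S_g$; these forced handles correspond to the white dots (handles inside a region) and paired black dots (handles connecting two regions) in Figure~\ref{fig:II}, while any further handle additions are stabilizations. The proof closes with a matching between the enumerated templates-with-forced-handles and the ten configurations of Figure~\ref{fig:II}, verifying both inclusions.

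The main obstacle is the combinatorial bookkeeping. One must respect the boundary-identification convention fixed in Figure~\ref{fig:configs}, track the complementary regions of three curves in $S_1$ carefully when determining which handles are forced, and check that distinct enumerated templates do not become homeomorphic after handle addition (and conversely that no minimal configuration of Figure~\ref{fig:II} is missed). Because the template surface is only $S_1$ the bookkeeping is finite, but the linked/unlinked dichotomy together with the possibility $|\ga_0 \cap \ep_0| = 2$ generates enough subcases that a systematic case analysis is required to confirm that precisely ten truly distinct minimal configurations arise.
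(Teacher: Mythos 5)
Your plan matches the paper's two-phase proof: first classify templates in $S_1$ using Lemma~\ref{templates} and Lemma~\ref{prune sep}, then generate all minimal configurations by adding handles. However, as a sketch it omits the two organizational devices that make the finite case analysis actually tractable, and without them the ``systematic case analysis'' you defer to would be hard to make airtight. In Phase 1, the paper parametrizes the possible $\ep_0$-arcs not by the linked/unlinked dichotomy you propose but by the $2 \times 2$ matrix of geometric intersection numbers between the $\ga_i$-arcs and $\ep_j$-arcs in the annulus $S_1 \cut \de_0$. The crucial observation is that each $\ga_i$-arc is separating in the annulus while each $\ep_j$-arc with $j \neq i$ has both endpoints on the opposite boundary circle, so every off-diagonal entry must be even; combined with the constraint that the total is $0$ or $2$, this cuts the matrices down to exactly four, and the linked/unlinked data is then read off from column-sum parities rather than imposed up front. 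In Phase 2, the paper does not try to determine ``forced'' handles informally; it isolates three precise constraints on the handle set --- a minimality condition (no repeated pairs, and a repeated region only when it sits in a bigon or annulus), a checkerboard condition (any added handle must preserve the mod-2 homology relations, formalized via two-colorings of the complementary regions of each pair of curves), and a type II condition (added handles must not change the number of complementary regions of $\ga \cup \de$ or $\de \cup \ep$). It is these three conditions, applied systematically to each of the four templates, that certify completeness of the list of ten minimal configurations. One small terminology slip: white dots in Figure~\ref{fig:II} are stabilizations (handles within a single region) that are nonetheless present in a minimal configuration because without them the curves fail to be in minimal position or to be distinct, whereas black dots mark non-stabilizing handles between distinct regions; your description groups both as ``forced handles'' and contrasts them with ``stabilizations,'' which misreads the partial order the paper is minimizing over.
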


\begin{proof}

The proof has two main phases:
\begin{enumerate}
\item classify all possible templates in $S_1$ for a triple of type II, and
\item build all minimal configurations of type II by adding handles to the templates.
\end{enumerate}
It follows from Lemma~\ref{templates} that this two-phase process recovers all triples of type II.

\medskip

\noindent \emph{Phase 1.} Suppose that $(\ga_0,\de_0,\ep_0)$ is a template in $S_1$ for a triple of curves $(\ga,\de,\ep)$ in $S_g$.  We will show that the template $(\ga_0,\de_0,\ep_0)$ in $S_1$ is, up to homeomorphism, one of the four templates shown in Figure~\ref{fig:II templates}.  

Consider the cut surface $S_1 \cut \de_0$.  We call the two resulting boundary components $\de_1$ and $\de_2$.   On the cut surface, $\ga_0$ and $\ep_0$ become pairs of arcs $\{\ga_1,\ga_2$\} and $\{\ep_1,\ep_2\}$.  The arcs $\ga_1$ and $\ga_2$ are as shown in configuration II in Figure~\ref{fig:configs template}.  What remains is to determine how $\ep_1$ and $\ep_2$ lie in the cut surface.

Since $(\ga,\de,\ep)$ is of type II, each of the $\ga_i$-arcs and $\ep_i$-arcs connects some $\de_i$ to itself.  The labels of the $\ga_i$-arcs and $\ep_i$-arcs are inherited from the labels of the $\de_i$ on which they have their endpoints.

The arcs $\ga_1$ and $\ga_2$ are both separating and they divide the cut surface $S_1 \cut \de_0$ into one annulus $R_0$ and two disks, $R_1$ and $R_2$.  We choose the names $R_1$ and $R_2$ according to which of $\de_1$ and $\de_2$ it intersects.

We may record the intersection numbers between the arcs $\ga_i$ and $\ep_i$ with a $2 \times 2$ matrix.  We will arrange these matrices so that the rows correspond to the $\ga_i$ and the columns correspond to the $\ep_i$.  Since $|\ga_0 \cap \ep_0|$ is 0 or 2, the total sum of the entries in the matrix must be 0 or 2.  Further, since each $\ga_i$ is separating, each off-diagonal entry of the matrix must be even.  

\begin{figure}
\centering
\includegraphics[width=.9\textwidth]{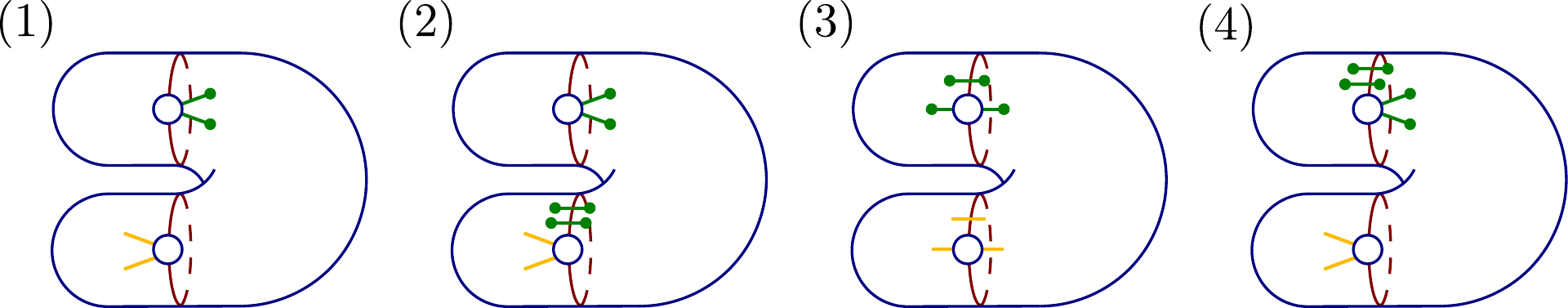}
\caption{The partial templates for configuration II; the partial arcs with dots belong to $\ep_1$ and the other arcs to $\ep_2$}
\label{fig:partial II}
\end{figure}

Up to renumbering the two arcs in each pair, there are four such matrices:
\[
 \begin{bmatrix}  0 & 0 \\ 0 & 0 \end{bmatrix}, \ 
 \begin{bmatrix}  0 & 0 \\ 2 & 0 \end{bmatrix},   \ \begin{bmatrix}  1 & 0 \\ 0 & 1 \end{bmatrix}, \ 
 \text{and} \ 
 \begin{bmatrix}  2 & 0 \\ 0 & 0 \end{bmatrix}.
\]
From these matrices, we can draw corresponding partial templates as in Figure~\ref{fig:partial II}, each unique up to homeomorphism of $S_1$.  To do this, we must first observe that each of the four matrices determines whether or not $\ga_0$ and $\ep_0$ are linked along $\de_0$: if the sum of the entries in each column is odd then they are linked and otherwise they are unlinked.  This is because the parity of the sum of the entries in the $i$th column is the number of times $\ep_i$ crosses $\ga_0$.  We treat the partial templates in turn.

\begin{figure}
\centering
\includegraphics[width=.9\textwidth]{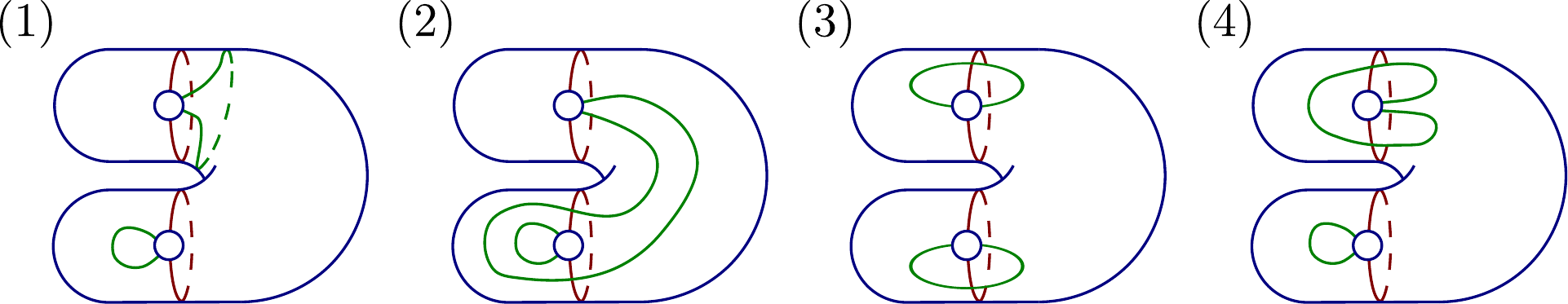}
\caption{The four templates for triples of type II}
\label{fig:II templates}
\end{figure}

\medskip

\noindent \emph{Partial template 1.} Since $R_2$ is a disk, there is a unique choice for the arc $\ep_2$ up to homeomorphism.  There are two choices for the arc $\ep_1$ in $R_0$.  One choice of $\ep_1$ gives rise to a separating curve $\ep_0$, which violates Lemma~\ref{prune sep}.  The other choice of $\ep_1$ gives template 1 in Figure~\ref{fig:II templates}.  

\medskip

\noindent \emph{Partial template 2.} There is a unique choice for $\ep_2$ and a unique choice for the intersection of $\ep_1$ with $R_2$.  Up to homeomorphism, there are two choices for the intersection of $\ep_1$ with $R_0$, determined by the induced pairing of the two points of $\ep_1 \cap \de_1$ with the two points of $\ep_1 \cap \ga_2$.  One of the possibilities leads to a separating curve $\ep_0$, violating Lemma~\ref{prune sep}.  The other leads to template 2 in Figure~\ref{fig:II templates}.

\medskip

\noindent \emph{Partial template 3.} The arcs $\ep_1$ and $\ep_2$ must bound disks with $\de_1$ and $\de_2$, respectively.  The boundaries of these two disks specify two arcs of $\de_1$ and $\de_2$ (the arcs of $\de_1$ and $\de_2$ contained in the disks).  Up to homeomorphism, there are then two possibilities: these two arcs are either equal in (the uncut) $S_1$ or not.  If they are equal, then the resulting curve $\ep_0$ is separating, violating Lemma~\ref{prune sep}.  The other possibility leads to template 3 in Figure~\ref{fig:II templates}.  

\medskip

\noindent \emph{Partial template 4.} There is a unique choice of $\ep_2$ and a unique choice for the intersection of $\ep_1$ with $R_1$.  Up to homeomorphism, there are two possibilities for the intersection of $\ep_1$ with $R_0$: the two sub-arcs are either nested or not.  When they are nested the resulting curve $\ep$ is separating.  When they are not nested, we obtain template 4 in Figure~\ref{fig:II templates}.

\medskip

Since all of the templates found are shown in Figure~\ref{fig:II templates}, this completes the first phase of the proof.

\medskip

\noindent \emph{Phase 2.} As mentioned above, Lemma~\ref{templates} gives that every minimal configuration of type II on a surface $S_g$ arises from one of the templates by adding handles.  Before addressing each template, we establish three constraints on the process of adding handles in order to obtain a minimal configuration of type II: the minimality condition, the checkerboard condition, and the type II condition.

We can record the set of handles required to pass from a given template to a given type II configuration in $S_g$ with a multiset (i.e. unordered list) of unordered pairs $\{R_i,R_j\}$ of not-necessarily-distinct regions in the complement of $\ga_0 \cup \de_0 \cup \ep_0$.

There is a partial order on configurations of type II where one configuration is less than another if the latter is obtained from the former by stabilization.  The desired configurations are the ones that are minimal in this partial order.

For a template and multiset of handles of pairs of regions as above to specify a minimal configuration, it must satisfy the following condition:
\begin{enumerate}
\item[] \emph{Minimality condition.} Each pair $\{R_i,R_j\}$ appears at most once, and the pair $\{R_i,R_i\}$ may appear only if $R_i$ is contained in a bigon or an annulus bounded by two of the curves.
\end{enumerate}
The first part of the minimality condition forces our multiset to be a set; we will refer to the multiset as the handle set in what follows.  Since the handle set must be finite (there are finitely many pairs of regions), we have thus reduced our problem of finding all minimal configurations to a finite check for each template.  

There are certain pairs of regions that may not appear in the handle set.  The curves $\ga_0$, $\de_0$ and $\ep_0$ are all homologous mod 2 in the templates.  We may not add any handles destroying this property.  For each pair of curves we may color the complementary regions by two colors so that two regions adjacent along an arc of one of the curves have different colors.  The resulting restriction on the handle set is as follows.
\begin{enumerate}
\item[] \emph{Checkerboard condition.} The handle set may not include any pair of regions that have different colors in any of the three colorings.
\end{enumerate}
Alternatively, any path connecting a pair of regions in the handle set must cross all three curves $\ga_0$, $\de_0$, and $\ep_0$ the same number of times mod 2.  The checkerboard condition eliminates many possibilities in the finite check.  

One requirement for $(\ga,\de,\ep)$ to be a triple of type II is that each of $\ga \cup \de$ and $\de \cup \ep$ must cut $S_g$ into exactly three regions.  It is already the case in each the templates that $\ga_0 \cup \de_0$ and $\de_0 \cup \ep_0$ cut $S_1$ into exactly three regions.  We thus obtain one additional condition, as follows.
\begin{enumerate}
\item[] \emph{Type II condition.} The handle set may not include any pair of regions (determined by the triple) that lie in two distinct regions determined by either of the pairs $\{\ga_0 , \de_0\}$ or $\{\de_0 , \ep_0\}$.  
\end{enumerate}

\medskip

We are now ready to determine all minimal configurations of type II obtained from the four templates.  The task for each template is to find all handle sets that respect the three constraints above and result in curves lying in minimal position.  In particular we must add handles to eliminate bigons and annuli formed by any pair of curves in the triple.

\medskip

\noindent \emph{Templates 1, 2, and 3.}  In these cases all pairs of distinct regions fail either the checkerboard or the type II condition.  Therefore, in adding handles to these templates we may only use stabilizations, subject to the minimality condition.  It thus remains to determine which subsets of the set of regions give rise to configurations of curves that are in minimal position and are minimal in the partial order.  The resulting minimal configurations are configurations 1--7 in Figure~\ref{fig:II}.  

\medskip

\noindent \emph{Template 4.}  In this case there is exactly one distinct pair of regions that satisfies both the checkerboard and type II conditions, namely, the two regions marked by black dots in configuration 8 in Figure~\ref{fig:II}.  This pair of regions may be in the handle set or not.  After making this choice, it remains to determine which stabilizations satisfy the minimality condition and result in configurations of curves that are in minimal position.  The resulting minimal configurations are configurations 8--10 of Figure~\ref{fig:II}.

\medskip

All of the minimal configurations found are shown in Figure~\ref{fig:II} and so the proposition is proven.
\end{proof}

\subsection*{Triples of type III}  Our next goal is to classify ordered triples of curves of type III.  We require one further lemma about templates of type III.

\begin{lemma}
\label{prune sepnonsep}
Suppose $\ga_0$, $\de_0$, and $\ep_0$ are curves in $S_2$ giving a template for an ordered triple of curves $(\ga,\de,\ep)$ in $S_g$ of type III.  Then the two arcs of $\ep_0$ in the cut surface $S_2 \cut \de_0$ are both separating or both nonseparating.
\end{lemma}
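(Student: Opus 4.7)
The plan is to translate the separating/nonseparating status of each arc of $\ep_0$ in $\Sigma := S_2 \cut \de_0$ into a mod-$2$ intersection-number condition, and then use the relation $[\ep_0] = [\de_0] \pmod{2}$ (inherited by the template from the type III triple in $S_g$) to force the two arcs to have the same status. Since the pair $\{\de_0,\ep_0\}$ inherits $i(\de_0,\ep_0)=2$ and $|\hat\imath|(\de_0,\ep_0)=0$ from $\{\de,\ep\}$, and $\Sigma$ is a genus-one surface with boundary $\de_1 \sqcup \de_2$, the curve $\ep_0$ decomposes on $\Sigma$ into exactly two arcs $\ep_1, \ep_2$, with both endpoints of $\ep_1$ on $\de_1$ and both endpoints of $\ep_2$ on $\de_2$; this distribution follows from $|\hat\imath|(\de_0,\ep_0)=0$, which forces the two local crossings of $\ep_0$ with $\de_0$ to have opposite signs and hence $\ep_0$ to sit on one side of $\de_0$ along each complementary arc. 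By mod-$2$ Poincar\'e--Lefschetz duality, an arc $\alpha$ in $\Sigma$ with endpoints on $\partial\Sigma$ is separating if and only if its mod-$2$ intersection number with every closed curve in $\Sigma$ vanishes.

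It therefore suffices to show that for every closed curve $c$ in $\Sigma$, the mod-$2$ intersection numbers of $c$ with $\ep_1$ and with $\ep_2$ agree. I would view everything in $S_2$: such a $c$ is disjoint from $\de_0$, and $\ep_1 + \ep_2 = \ep_0$ as mod-$2$ cycles, so by additivity of the mod-$2$ intersection pairing on $S_2$ we have
\[
c \cdot \ep_0 \;\equiv\; c \cdot \ep_1 + c \cdot \ep_2 \pmod{2}.
\]
The template inherits the relation $[\ep_0] = [\de_0]$ in $H_1(S_2;\Z/2)$ from the triple in $S_g$, so $c \cdot \ep_0 \equiv c \cdot \de_0 \equiv 0 \pmod{2}$, the last equality because $c$ is disjoint from $\de_0$. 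This forces $c \cdot \ep_1 \equiv c \cdot \ep_2 \pmod{2}$; since intersections of $c$ with the arcs $\ep_i$ computed in $\Sigma$ agree with those computed in $S_2$, we conclude that $\ep_1$ and $\ep_2$ are either both separating in $\Sigma$ or both nonseparating.

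The main obstacle is justifying the inherited relation $[\ep_0] = [\de_0]$ in $H_1(S_2;\Z/2)$: it reduces to checking that in the passage from $S_g$ to the template $S_2$, obtained by closing off each added handle with a pair of disks, the kernel of $H_1(A;\Z/2) \to H_1(S_g;\Z/2)$ is contained in that of $H_1(A;\Z/2) \to H_1(S_2;\Z/2)$, where $A$ denotes the template with the handle-disks removed. This is a direct Mayer--Vietoris computation: both kernels are generated by relations among boundary loops of $A$, the $S_2$ relations being strictly stronger (each boundary loop is itself killed, rather than merely identified with its partner). Once this verification is in place, the remainder of the proof is the short intersection-pairing calculation above.
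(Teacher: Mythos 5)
Your proof is correct, but it takes a genuinely different route from the paper's. The paper argues by contradiction: if one $\ep_0$-arc in $S_2 \cut \de_0$ were separating and the other not, then since the type III configuration requires both arcs of $\ep$ in $S_g \cut \de$ to be nonseparating with separating union, some added handle would have to join the two complementary regions of the separating arc---but such a handle also makes the union of the two arcs nonseparating in $S_g \cut \de$, a contradiction. You instead reformulate ``separating'' via Lefschetz duality as vanishing of the $\Z/2$-intersection pairing with all closed curves in $S_2 \cut \de_0$, and deduce from the inherited relation $[\ep_0]=[\de_0]$ in $H_1(S_2;\Z/2)$ that the two arcs pair identically against every such closed curve. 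Both arguments turn on the same underlying mod-2 homology fact, but the paper's is a short geometric contradiction while yours is algebraic and more systematic. The cost is that your argument requires the additional verification that the mod-2 homology relation descends from $S_g$ to the template $S_2$; your kernel-inclusion step (the $S_g$-kernel is spanned by sums of paired boundary circles of $A$, the $S_2$-kernel by all boundary circles) is correct, and it incidentally supplies a proof of the assertion---stated without justification in the checkerboard-condition discussion in the proof of Proposition~\ref{prop:II}---that $\ga_0$, $\de_0$, $\ep_0$ remain mod-2 homologous in the template.
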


\begin{proof}

Consider the cut surface $S_2 \cut \de_0$, and suppose that one of the $\ep_0$-arcs in $S_2 \cut \de_0$ is separating and one is nonseparating.  In $S_g$ the configuration $\{\de,\ep\}$ is a stabilization of the configuration III in Figure~\ref{fig:configs}.  This means that after adding handles to the template on $S_2$, the two arcs of $\ep$ must be nonseparating in the cut surface $S_g \cut \de$, and their union must separate $S_g$.

The only way to make the separating $\ep_0$-arc in $S_2$ into a nonseparating arc in $S_g \cut \delta$ would be for one of the added handles to connect its complementary regions.  In the surface with the added handles the union of the two $\ep$-arcs is then nonseparating in $S_g \cut \delta$.  Thus $(\ga_0,\de_0,\ep_0)$ is not a template for $(\ga,\de,\ep)$.
\end{proof}

\begin{figure}
\centering
\includegraphics[width=.9\textwidth]{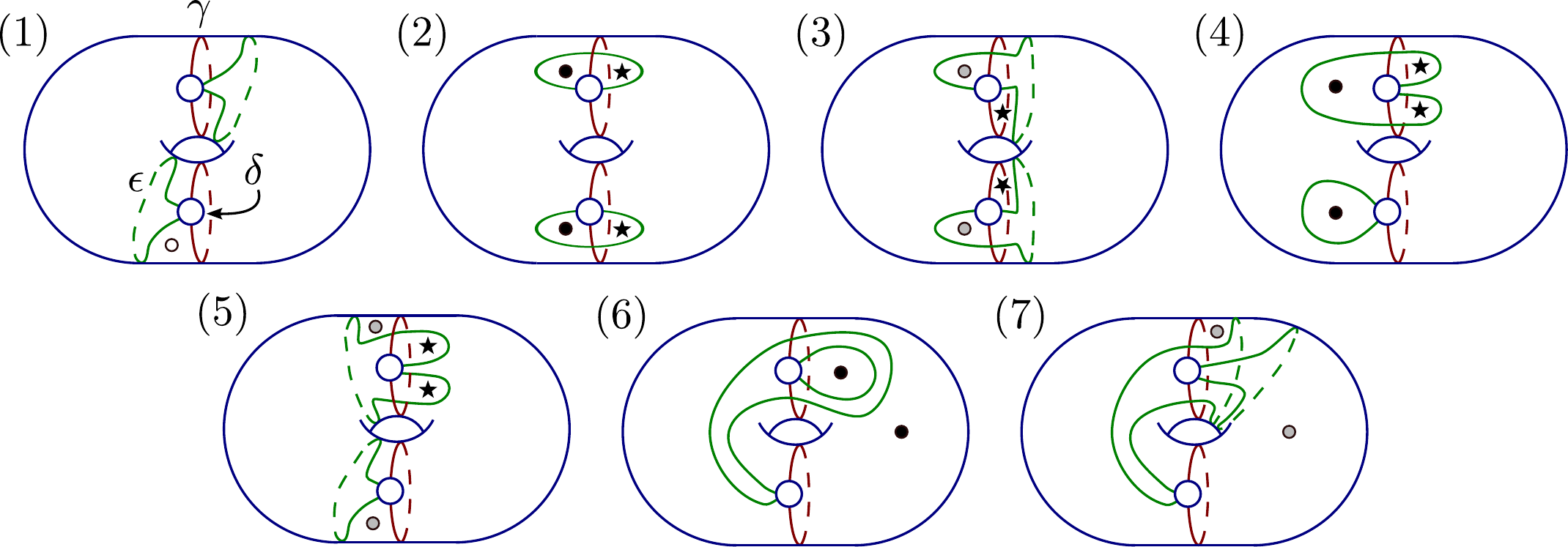}
\caption{The sixteen minimal configurations of triples of type III; a white dot indicates a handle attached within a single region, a pair of black dots indicates a handle connecting two distinct regions, a pair of gray dots indicates two possibilities, namely, one handle attached within each of the two regions or a single handle connecting the two regions, and a pair of stars indicates two possibilities, namely, either no handle is added or a handle is added connecting the two regions}
\label{fig:III}
\end{figure}

\begin{proposition}
\label{prop:III}
Up to homeomorphism, every ordered triple of curves of type III in $S_g$ is a stabilization of one of the minimal configurations in Figure~\ref{fig:III}.
\end{proposition}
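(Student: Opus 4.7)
The plan is to mirror the two-phase strategy used to prove Proposition~\ref{prop:II}. By Lemma~\ref{templates} every ordered triple of type III in $S_g$ arises by adding handles to a template in $S_2$. So in Phase~1 I will classify all templates $(\ga_0,\de_0,\ep_0)$ in $S_2$ for triples of type III, and in Phase~2 I will enumerate, for each template, all multisets of handle attachments that produce a minimal configuration of type III.

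For Phase~1, I would cut $S_2$ along $\de_0$ to obtain a genus-one surface with two boundary circles $\de_1,\de_2$. The arcs of $\ga_0$ are prescribed by the type~III picture in Figure~\ref{fig:configs template}, so the only freedom is in the arcs $\ep_1,\ep_2$ coming from $\ep_0$. As in Proposition~\ref{prop:II}, I would organize the cases by the $2\times2$ intersection matrix between the $\ga_i$ and $\ep_j$. The relevant constraints are: (i) each entry is at most $2$ and the total is $0$ or $2$, (ii) parity considerations impose the linking/unlinking of $\ga_0$ and $\ep_0$ along $\de_0$ column-by-column as in the proof of Proposition~\ref{prop:II}, (iii) Lemma~\ref{prune sep} forbids templates in which $\ep_0$ is separating in $S_2$, and (iv) Lemma~\ref{prune sepnonsep} imposes the additional requirement, special to type III, that the two arcs of $\ep_0$ in $S_2 \cut \de_0$ are either both separating or both nonseparating. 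For each admissible intersection matrix I would then draw the partial template, resolve the remaining freedom in how the sub-arcs are joined up on the disk and annulus complementary regions, and discard those resolutions that fail (iii) or (iv). This yields a finite list of templates up to homeomorphism, each of which I expect to depict by a figure analogous to Figure~\ref{fig:II templates}.

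Phase~2 is then a finite enumeration for each template. A handle set is a collection of unordered pairs of regions $\{R_i,R_j\}$ in the complement of $\ga_0\cup\de_0\cup\ep_0$; I would cut these regions, delete a disk from each, and glue in handles. I would impose the direct analogues of the three constraints from Proposition~\ref{prop:II}: the minimality condition (each pair used at most once, self-pairs allowed only inside bigons or annuli between two of the curves), the checkerboard condition (each pair of regions in the handle set must lie on the same side for each of the three natural two-colorings of the complement, which is equivalent to all three $\Z/2$-homology classes being preserved), and the type~III condition (the pair must not straddle distinct regions of either $\ga_0\cup\de_0$ or $\de_0\cup\ep_0$, so that the new triple remains of type III, i.e. the pair $\{\de,\ep\}$ still has two complementary regions of the correct topological type). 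After listing handle sets satisfying all three conditions for each template, I would check minimality in the stabilization order and verify that the curves are in minimal position. The resulting list is the one compressed into Figure~\ref{fig:III}, where gray dots and star pairs are shorthand collapsing several logically distinct but pictorially similar minimal configurations onto one diagram.

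The main obstacle I anticipate is not conceptual but combinatorial: $S_2$ has larger genus than the ambient surface for type~II, so each template has more complementary regions and many more candidate handle sets, while Lemma~\ref{prune sepnonsep} imposes a non-local constraint that splits the template analysis into the two sub-classes indicated above. Careful bookkeeping is needed to ensure that (a) no template is double-counted under homeomorphism of $S_2$, and (b) the gray-dot and star conventions faithfully encode all minimal handle sets without collapsing genuinely distinct configurations. Once the list of templates is in hand, the Phase~2 enumeration for each template is mechanical, and I expect the total count to match the sixteen pictures in Figure~\ref{fig:III}.
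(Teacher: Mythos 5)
Your plan is structurally the same as the paper's: use Lemma~\ref{templates} to reduce to templates on $S_2$, cut along $\de_0$, organize the completion of $\ep_0$ by the $2\times2$ matrix of intersections with the $\ga_i$-arcs, prune with Lemma~\ref{prune sep} and Lemma~\ref{prune sepnonsep}, and then add handles subject to the minimality, checkerboard, and type~III constraints. That is exactly the route taken. However the sketch contains a few inaccuracies that would derail the Phase~1 enumeration if you tried to carry it out as written.

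First, you describe resolving the arcs ``on the disk and annulus complementary regions,'' but for type III this is wrong: in $S_2 \cut \de_0$ the two arcs of $\ga_0$ are nonseparating and together divide the cut surface into \emph{two annuli} $R_1, R_2$, not a disk-plus-annulus mix as in type II. This matters because the constraint you would naturally import from the type II case --- that each off-diagonal entry of the intersection matrix be even because each $\ga_i$-arc is separating --- does \emph{not} hold here. Dropping that parity constraint changes the case analysis substantially: the paper's list of admissible matrices for type III has seven entries, including matrices with odd off-diagonal entries like $\left[\begin{smallmatrix}1&0\\1&0\end{smallmatrix}\right]$ and $\left[\begin{smallmatrix}0&1\\1&0\end{smallmatrix}\right]$, whereas type II had only four. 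Second, your remark that $S_2$ having larger genus ``means each template has more complementary regions'' is backwards: the type III configuration $\ga_0 \cup \de_0 \cup \ep_0$ in $S_2$ actually tends to have \emph{fewer} complementary regions than the type II configuration in $S_1$ (the $\ga$-arcs give two regions rather than three), and it is not the region count but the larger genus of the pieces --- and the nontrivial constraint from Lemma~\ref{prune sepnonsep} interacting with the two ways to route an arc across an annulus --- that makes the bookkeeping heavier. Beyond these points, the sketch never actually performs the finite enumeration, so as it stands it asserts rather than establishes that the output agrees with Figure~\ref{fig:III}; with the two corrections above the framework would support the enumeration, but the combinatorial work is the content of the proposition and still needs to be done.
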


\begin{proof}

We complete the proof in the same two phases as in the proof of Proposition~\ref{prop:II}, first listing all possible templates in $S_2$ for a triple of type III and then building all minimal configurations of type III from these templates.

\medskip

\noindent \emph{Phase 1.} Suppose that $(\ga_0,\de_0,\ep_0)$ is a template in $S_2$ for a triple of curves $(\ga,\de,\ep)$ in $S_g$.  We will show that the template $(\ga_0,\de_0,\ep_0)$ in $S_2$ is, up to homeomorphism, one of the seven templates shown in Figure~\ref{fig:III templates}. 

The curves $\ga_0$ and $\de_0$ are configured in $S_2$ as in the type III configuration in Figure~\ref{fig:configs template}.  Consider the cut surface $S_2 \cut \de_0$, as in the figure.  We call the two resulting boundary components $\de_1$ and $\de_2$.

In $S_2 \cut \de_0$ both $\ga_0$ and $\ep_0$ become pair of arcs $\{\ga_1,\ga_2$\} and $\{\ep_1,\ep_2\}$.  Each connects some $\de_i$ to itself.  The labels of the $\ga_i$-arcs and the $\ep_i$-arcs are inherited from the labels of the $\de_i$ on which they have their endpoints. The arcs $\ga_1$ and $\ga_2$ divide $S_2 \cut \de_0$ into two annuli, $R_1$ and $R_2$.  

\begin{figure}
\centering
\includegraphics[width=.9\textwidth]{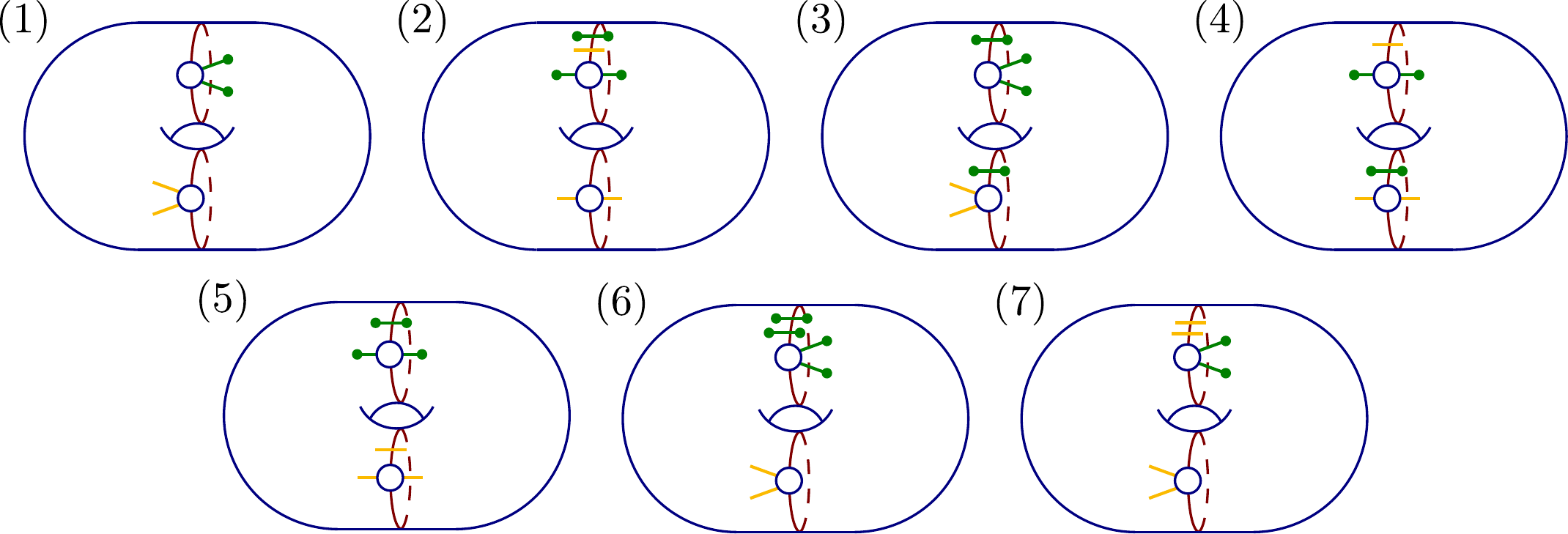}
\caption{The partial templates for configuration III; the partial arcs with a dot belong to $\ep_1$ and the other arcs to $\ep_2$}
\label{fig:partial III}
\end{figure}

We may record the intersection numbers between the $\ga_i$-arcs and $\ep_i$-arcs with a $2 \times 2$ matrix, where the rows and columns correspond to the $\ga_i$ and the $\ep_i$, respectively.  The total sum of the entries in the matrix must be 0 or 2.  Up to renumbering the arcs in each pair, there are seven such matrices:
\[
 \begin{bmatrix}  0 & 0 \\ 0 & 0 \end{bmatrix}, \ 
 \begin{bmatrix}  1 & 1 \\ 0 & 0 \end{bmatrix}, \ 
 \begin{bmatrix}  1 & 0 \\ 1 & 0 \end{bmatrix}, \ 
 \begin{bmatrix}  0 & 1 \\ 1 & 0 \end{bmatrix}, \ 
 \begin{bmatrix}  1 & 0 \\ 0 & 1 \end{bmatrix}, \ 
 \begin{bmatrix}  2 & 0 \\ 0 & 0 \end{bmatrix},   \ \text{and} \ 
 \begin{bmatrix}  0 & 2 \\ 0 & 0 \end{bmatrix}.
\]
The partial templates corresponding to these matrices are shown in Figure~\ref{fig:partial III}.  As in the proof of Proposition~\ref{prop:II}, $\ga_0$ and $\ep_0$ are linked or unlinked along $\de_0$ according to the parities of the column sums.  In the pictures, $R_1$ and $R_2$ are the left- and right-hand sides of each partial template, respectively.  We treat the partial templates in turn.  

\begin{figure}
\centering
\includegraphics[width=.9\textwidth]{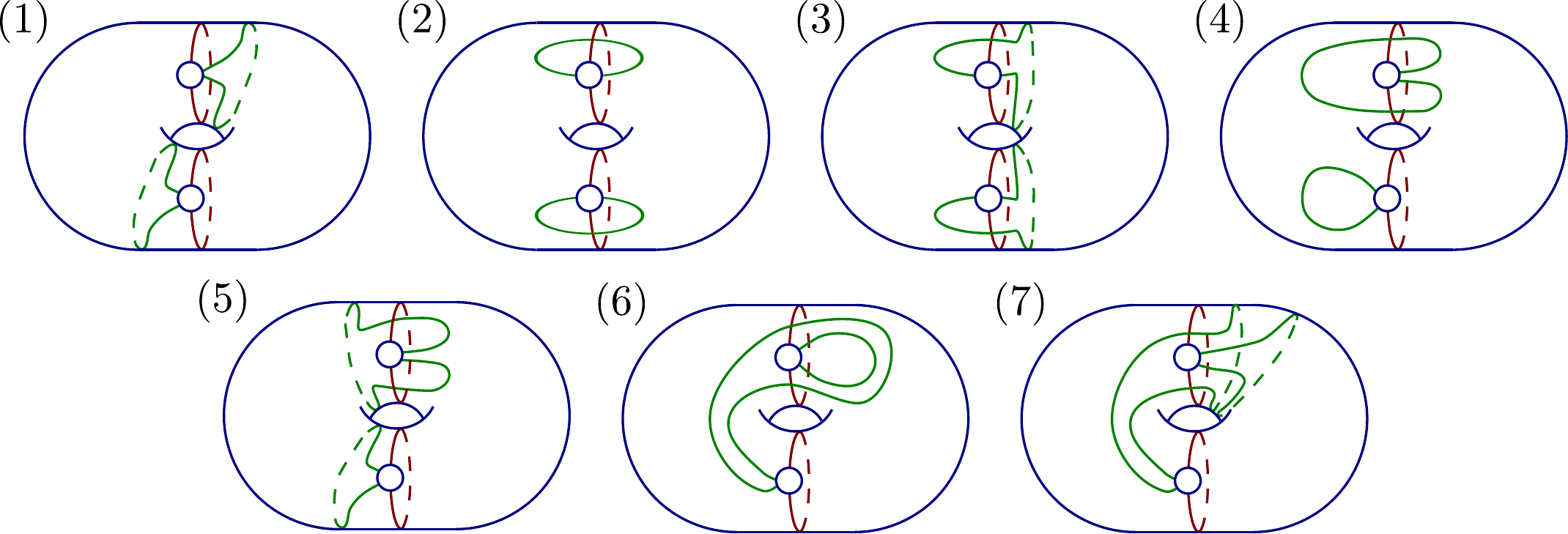}
\caption{The seven templates for triples of type III}
\label{fig:III templates}
\end{figure}

\medskip

\noindent \emph{Partial template 1.} Given two points on a single boundary component of an annulus, there are two choices of arcs in the annulus connecting those points up to homeomorphism fixing the boundary.  Therefore, in each annulus $R_i$ there are two choices of arcs that complete the partial template.  This gives four possibilities.

In each $R_i$ one of the choices of arcs is separating in the cut surface $S_2 \cut \de_0$ and one is nonseparating.  By Lemma~\ref{prune sepnonsep} we must either choose the two separating arcs or the two nonseparating arcs to form a template.  If we choose the two separating arcs, then $\ep_0$ is separating, violating Lemma~\ref{prune sep}.  Thus the only possibility is where $\ep_1$ and $\ep_2$ are nonseparating.  This choice gives template 1 in Figure~\ref{fig:III templates}.  

\medskip

\noindent \emph{Partial templates 2 and 3.} Neither of these partial templates can be completed to be a template.  To see this we consider any of the four curves obtained as the boundary of a neighborhood of $\ga_0 \cup \de_0$.  The algebraic intersection number of any of these curves with $\ep_0$ is odd.  On the other hand, the algebraic intersection number of any of these curves with $\ga_0$ and $\de_0$ is 0.  This contradicts the assumption that $\ep_0$ is homologous to $\ga_0$ and $\de_0$ mod 2.

\medskip

\noindent \emph{Partial template 4.} In this case there is a unique choice of $\ep_0$ up to homeomorphism, and it is a separating curve.  By Lemma~\ref{prune sep} there is no template arising from this partial template.

\medskip

\noindent \emph{Partial template 5.} In order to complete $\ep_1$ from this partial template, we need to choose two arcs, one in each annulus $R_1$ and $R_2$.  This gives four possibilities.  Up to homeomorphism there are only two possibilities for the resulting $\ep_1$, one that is separating in the cut surface $S_2 \cut \de_0$ and one that is nonseparating.  There are similarly four choices for $\ep_2$, two separating and two nonseparating in $S_2 \cut \de_0$.  By  Lemma~\ref{prune sepnonsep} we are left with four choices for the pair $\{\ep_1,\ep_2\}$.  Two of these choices result in a separating $\ep_0$, which violates Lemma~\ref{prune sep}.  The other two choices are templates 2 and 3 in Figure~\ref{fig:III templates}.

\medskip

\noindent \emph{Partial template 6.} The intersection of $\ep_1$ with $R_2$ is a pair of arcs.  Up to homeomorphism, there are two possibilities: they can be nested or not nested.  There are then two choices for the intersection of $\ep_1$ with $R_1$.  This gives four total choices for $\ep_1$.  There are two choices for $\ep_2$, one separating and one not, but by Lemma~\ref{prune sepnonsep} the choice of $\ep_2$ is completely determined by the choice of $\ep_1$.  Thus there are four total candidates for $\ep_0$.  The two candidates corresponding to the nested arcs in $R_2$ are separating, and by Lemma~\ref{prune sep} these are ruled out.  The other two are templates 4 and 5 in Figure~\ref{fig:III templates}.

\medskip

\noindent \emph{Partial template 7.} Up to homeomorphism there are two choices for the intersection of $\ep_2$ with $R_1$, corresponding to the two ways to pair the two points of $\ep_2 \cap \de_2$ with the two points of $\ep_2 \cap \ga_1$.  As usual there are two choices for the intersection of $\ep_2$ with $R_2$.  This gives four choices for $\ep_2$.  For one of the four choices there is no choice of $\ep_1$ that is disjoint from $\ep_2$.  For each of the other three choices for $\ep_2$, the choice of $\ep_1$ is determined by Lemma~\ref{prune sepnonsep}.  One of the resulting candidates for $\ep_0$ is a separating curve, violating Lemma~\ref{prune sep}.  The other two are templates 6 and 7 in Figure~\ref{fig:III templates}.

\medskip

Since all of the templates found are shown in Figure~\ref{fig:III templates}, this completes the first phase of the proof.

\medskip

\noindent \emph{Phase 2.} By Lemma~\ref{templates} we may obtain the minimal configurations of triples of type III by adding handles to the seven templates.  As in the proof of Proposition~\ref{prop:II} the corresponding handle set is subject to the minimality and checkerboard conditions.  In place of the type II condition, we will give an analogous type II condition, as follows.

Two curves $\{\de,\ep\}$ in $S_g$ that intersect in two points and have algebraic intersection number 0 form a pair of type III if and only if $\de \cup \ep$ has two complementary regions, each with two boundary components.  In some of the templates $\de_0 \cup \ep_0$ has three complementary regions.  In these cases, we must add a handle connecting the two non-adjacent regions complementary to $\{\de,\ep\}$, so that the resulting $\{\de,\ep\}$ is of type III.   

\begin{enumerate}
\item[] \emph{Type III condition.} If $\de_0 \cup \ep_0$ has three complementary regions in $S_2$, the handle set must include a pair of regions (for the triple) that are contained in non-adjacent regions of $S_2$ determined by the pair $\{\de_0 , \ep_0\}$. 
\end{enumerate}

For each of the seven templates we will follow three steps to produce all possible minimal configurations of triples of type III.  Each step may produce multiple configurations.
\begin{enumerate}
\item[\emph{Step 1.}] Check the number of components of the complement of $\de_0 \cup \ep_0$; if there are three components, add a single pair to the handle set as in the type III condition while respecting the checkerboard condition.
\item[\emph{Step 2.}] Check in each of the resulting configurations if the $\ep_0$-curve and the $\ga_0$-curve are homotopically distinct in minimal position; if not add pairs to the handle set as above, respecting the checkerboard and minimality conditions.
\item[\emph{Step 3.}] Check in each of the resulting configurations if there are any other pairs of regions that satisfy the checkerboard condition and the minimality condition.  If so we may add these handles to create a new minimal configuration.
\end{enumerate}
This first step is required in order to obtain a triple of type III.  Note that this step is never a stabilization.  The second step might be a stabilization, but it is also required in order to obtain a triple of type III.  The third step is also never a stabilization, but it is required in order to obtain all minimal configurations.  In practice, at most one handle is added in the third step.

We are now ready to determine all minimal configurations of type III obtained from the seven templates.

\medskip

\noindent \emph{Template 1.} In this case the complement of $\de_0 \cup \ep_0$ already has exactly two components, and so the type III condition is satisfied.  However, there is an annulus bounded by $\ga_0$ and $\ep_0$.  Every handle connecting distinct regions fails the checkerboard condition.  Therefore, the only possibility is to add handles within regions.

The annulus between $\ga_0$ and $\ep_0$ is divided into two squares by $\de_0$.  We need to add a handle to one of the two squares, but the two choices give homeomorphic configurations.  The result is configuration 1 in Figure~\ref{fig:III}.

\medskip

\noindent \emph{Templates 2, 4, and 6.} In these cases the complement of $\de_0 \cup \ep_0$ has three components. Up to homeomorphism, there is a unique pair of regions that satisfies the checkerboard condition and the type III condition. These pairs of regions are marked by black dots in configurations 2, 4, and 6 in Figure~\ref{fig:III}.  Adding each of these handles results in a minimal configuration.  For templates 2 and 4, it is possible to further add a single handle between the two regions as in Step 3.  The resulting minimal configurations are given by pictures 2, 4, and 6 in Figure~\ref{fig:III}.  Each of the pictures 2 and 4 really represents two minimal configurations; the stars indicate the additional handle that can be added in Step 3.

\medskip

\noindent \emph{Templates 3, 5, and 7.} In these cases the complement of $\de_0 \cup \ep_0$ already has exactly two components, satisfying the type III condition.  However the $\de_0$ and $\ep_0$ curves are not in minimal position; in fact they form two bigons.  Up to homeomorphism, there are two handle sets that we may use as in Step 2: in each case we may attach a handle that connects the pair of regions indicated by gray dots in pictures 3, 5, and 7 of Figure~\ref{fig:III}, or we may pair each region marked by a gray dot with itself.  As such, we obtain two minimal configurations from each template.  As in Step 3, the two minimal configurations produced from templates 3 and 5 so far allow one additional handle that still results in a minimal configuration.  The corresponding regions are marked by stars in the figure.  Thus, pictures 3 and 5 represent four different minimal configurations each.

\medskip

Since all of the minimal configurations we have constructed are shown in Figure~\ref{fig:III}, this completes the proof of the proposition.  
\end{proof}

\begin{figure}
\centering
\includegraphics[width=.9\textwidth]{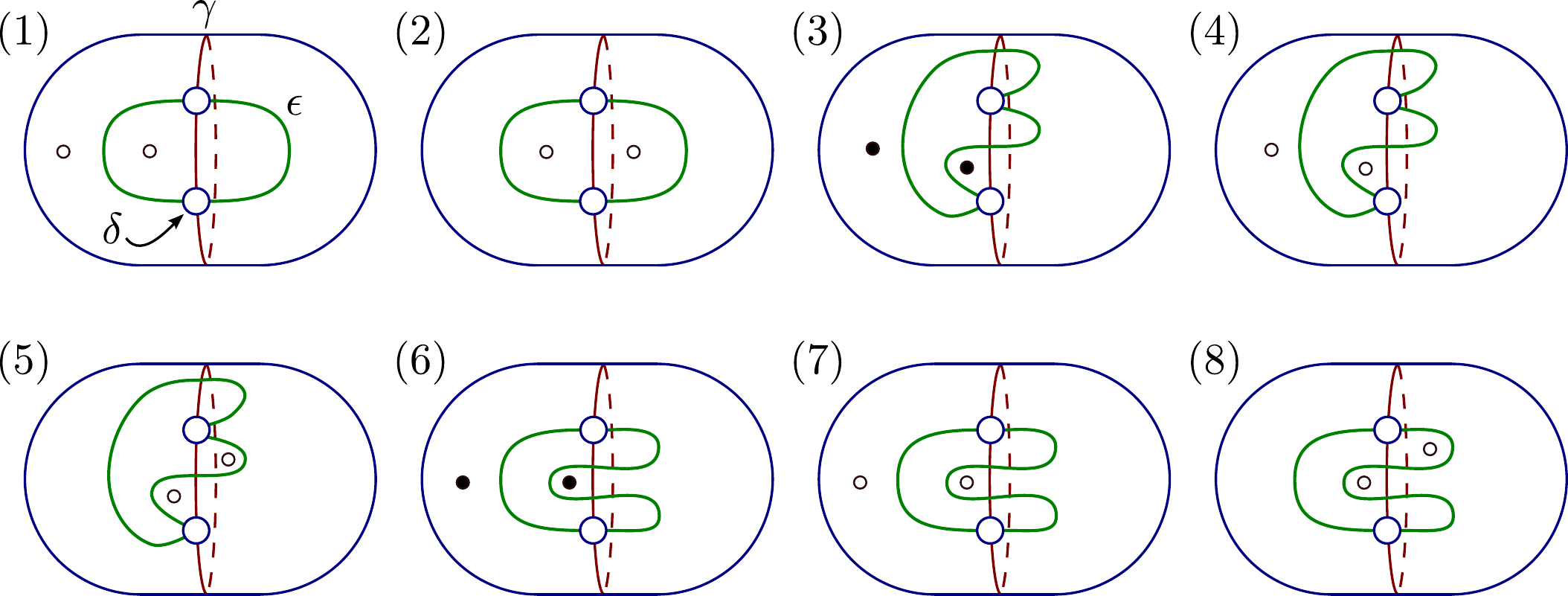}
\caption{The eight minimal configurations of triples of type IV; a white dot indicates a handle attached within a single region and a pair of black dots indicates a handle connecting two distinct regions}
\label{fig:IV}
\end{figure}

\subsection*{Triples of type IV}  Finally we classify ordered triples of type IV.

\begin{proposition}
\label{prop:IV}
Up to homeomorphism, every ordered triple of curves of type IV in $S_g$ is a stabilization of one of the minimal configurations in Figure~\ref{fig:IV}.
\end{proposition}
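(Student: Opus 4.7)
The plan is to follow the two-phase strategy used in the proofs of Propositions~\ref{prop:II} and~\ref{prop:III}: first classify all templates $(\ga_0,\de_0,\ep_0)$ in $S_1$ for an ordered triple of type IV, and then determine all handle additions that yield a minimal configuration.

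For Phase~1, I would begin by cutting $S_1$ along $\de_0$ to obtain an annulus with boundary components $\de_1$ and $\de_2$.  Because $\{\ga_0,\de_0\}$ is of type IV we have $|\hat\imath|(\ga_0,\de_0) = i(\ga_0,\de_0) = 2$, so all intersections of $\ga_0$ with $\de_0$ have the same sign; hence both arcs $\ga_1,\ga_2$ of $\ga_0$ in the cut surface run from $\de_1$ to $\de_2$.  The same argument applies to the arcs $\ep_1,\ep_2$ of $\ep_0$.  The arcs $\ga_1,\ga_2$ split the annulus into two rectangles $R_1,R_2$.  I would then enumerate, up to reordering within each pair, the $2\times 2$ matrices recording $|\ga_i \cap \ep_j|$ with total entry-sum at most~2, draw the corresponding partial templates, and in each case classify the possible completions of the $\ep_i$-arcs up to homeomorphism of $S_1$.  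Lemma~\ref{prune sep} will discard every candidate for which the resulting $\ep_0$ becomes separating; because $\ga_0$ does \emph{not} separate the annulus for type IV, there is no analogue of Lemma~\ref{prune sepnonsep} needed here.

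For Phase~2, I would add handles to each surviving template to build configurations in higher-genus surfaces.  The handle set must respect the same two constraints that arose in Propositions~\ref{prop:II} and~\ref{prop:III}, namely the \emph{minimality condition} and the \emph{checkerboard condition} induced by the mod~2 homology constraint on the triple.  The analogue of the earlier type conditions is a \emph{type IV condition}: the pair $\{\de,\ep\}$ must continue to satisfy $|\hat\imath|(\de,\ep) = i(\de,\ep) = 2$ in the final surface, which constrains the number of complementary regions of $\de_0 \cup \ep_0$; whenever this fails in a template I would need to add a handle joining the appropriate pair of regions.  I would then follow the three-step procedure used in the proof of Proposition~\ref{prop:III}: enforce the type IV condition, add handles to realize minimal position (eliminating any remaining bigons formed by $\ga_0$ and $\ep_0$), and finally check for any further pair of regions whose identification satisfies the checkerboard and minimality conditions and yields a new minimal configuration.

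The main obstacle is the Phase~1 enumeration.  In contrast to types II and III, the arcs $\ga_1,\ga_2$ do not separate the annulus, so there are more topological possibilities for how the arcs $\ep_1,\ep_2$ can wind between $R_1$ and $R_2$, and fewer parity obstructions to rule cases out directly.  Care will therefore be needed to verify that homeomorphic completions are not counted twice and that each surviving template really does give a valid triple of type~IV.  Once the templates are in hand, Phase~2 reduces to a routine finite check, which should recover exactly the eight minimal configurations shown in Figure~\ref{fig:IV}.
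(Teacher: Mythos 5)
Your plan follows exactly the same two-phase template/handle-addition strategy as the paper and would arrive at the same eight configurations, so in essence the approach matches. Two small corrections are worth noting, though. First, the ``type IV condition'' you anticipate in Phase~2 is vacuous: as the paper observes, the pair $\{\de_0,\ep_0\}$ in any template for a type IV triple is \emph{automatically} of type IV, since two curves that intersect in two points with nonzero algebraic intersection number are already in minimal position; there is no analogue of Step~1 from the type III proof. Second, your prediction that Phase~1 will be the hardest part, and your remark that the arcs $\ga_1,\ga_2$ ``do not separate the annulus,'' are both off. You correctly said earlier in the same paragraph that $\ga_1,\ga_2$ split the annulus into two rectangles $R_1,R_2$ --- so they do separate. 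Since $R_1$ and $R_2$ are disks rather than annuli (as in type III) or an annulus plus disks (as in type II), each $\ep_i$-arc is determined up to homeomorphism once its endpoints are fixed; this makes Phase~1 \emph{easier} here than for types II and III. Consequently, partial templates either have a unique completion or none at all (ruled out because two disjoint arcs in a disk cannot be linked on the boundary), and the enumeration yields only three templates.
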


\begin{proof}

We complete the proof in the same two phases used in the proofs of Propositions~\ref{prop:II} and~\ref{prop:III}, first listing all possible templates in $S_1$ for a triple of type IV, and then building all minimal configurations of type IV from these templates.  

\medskip

\noindent \emph{Phase 1.} Suppose that $(\ga_0,\de_0,\ep_0)$ is a template in $S_1$ for a triple of curves $(\ga,\de,\ep)$ in $S_g$. We will show that up to homeomorphism this template is one of the three templates shown in Figure~\ref{fig:IV templates}.  

\begin{figure}
\centering
\includegraphics[width=.75\textwidth]{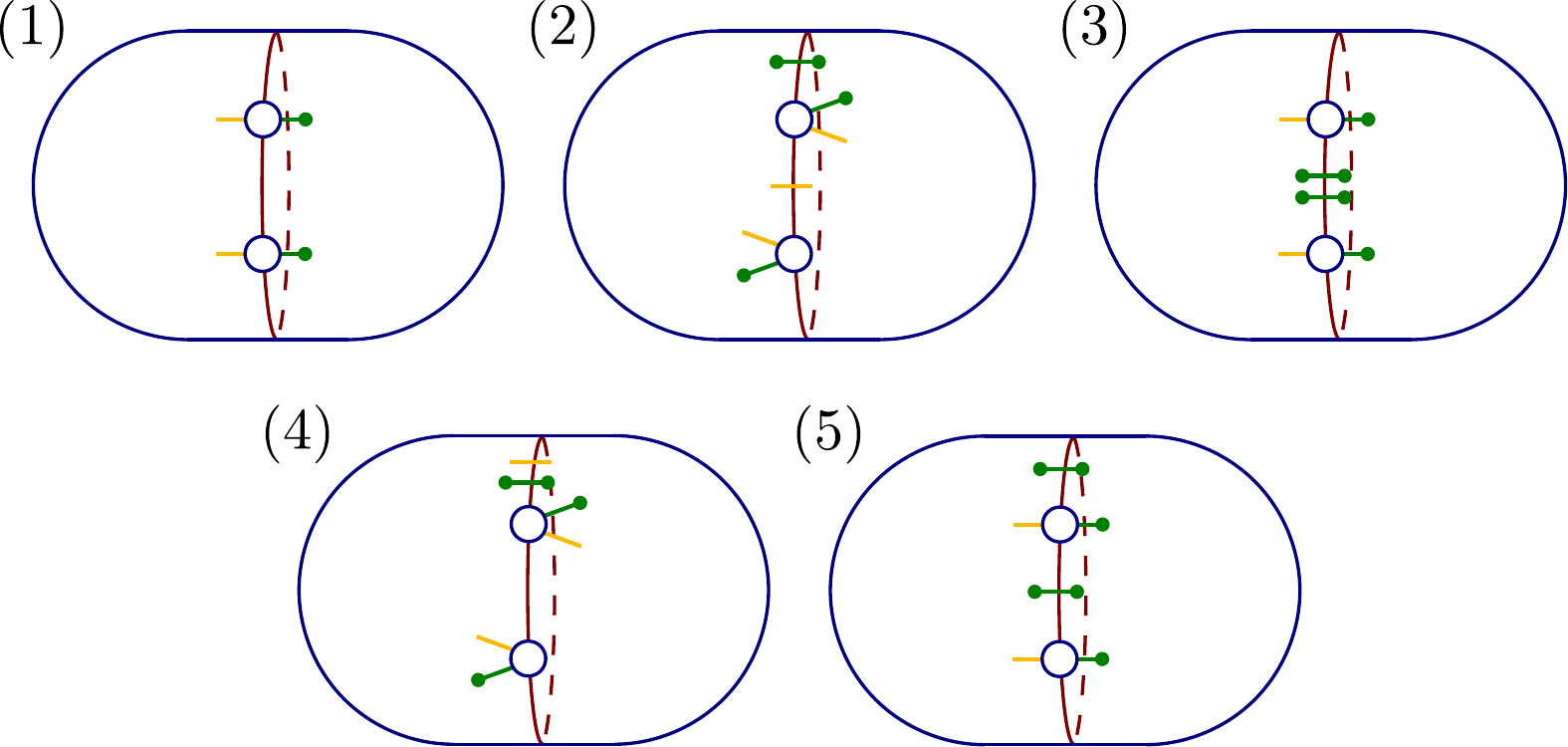}
\caption{The partial templates for configuration IV; the partial arcs with a dot belong to $\ep_1$ and the other arcs belong to $\ep_2$}
\label{fig:partial IV}
\end{figure}

The curves $\ga_0$ and $\de_0$ are configured in $S_1$ as in Figure~\ref{fig:configs template}.  Consider the cut surface $S_1 \cut \delta_0$ and call the resulting boundary components $\de_1$ and $\de_2$.   In the cut surface, $\ga_0$ and $\ep_0$ each become a pair of arcs.  By recording the pairwise intersections, we obtain a $2 \times 2$ matrix where the sum of the entries is 0 or 2.  Up to symmetry there are five matrices:
\[
 \begin{bmatrix}  0 & 0 \\ 0 & 0 \end{bmatrix}, \ 
 \begin{bmatrix}  1 & 0 \\ 0 & 1 \end{bmatrix}, \
 \begin{bmatrix}  0 & 0 \\ 2 & 0 \end{bmatrix}, \ 
 \begin{bmatrix}  1 & 1 \\ 0 & 0 \end{bmatrix}, \   \text{and} \ 
 \begin{bmatrix}  1 & 0 \\ 1 & 0 \end{bmatrix}.
\]
Each matrix again determines whether $\ga_0$ and $\ep_0$ are linked or unlinked along $\de_0$.  The corresponding partial templates are shown in Figure~\ref{fig:partial IV}.  To complete the first phase we must find all ways of completing the partial templates to templates.  We treat the five cases in turn.  Our work is simplified by the fact that the $\ga_0$-arcs divide the $S_1 \cut \delta_0$ into two disks $R_1$ and $R_2$ and the fact that there is a unique arc connecting two points in the boundary of a disk, up to homeomorphism.

\begin{figure}
\centering
\includegraphics[width=.75\textwidth]{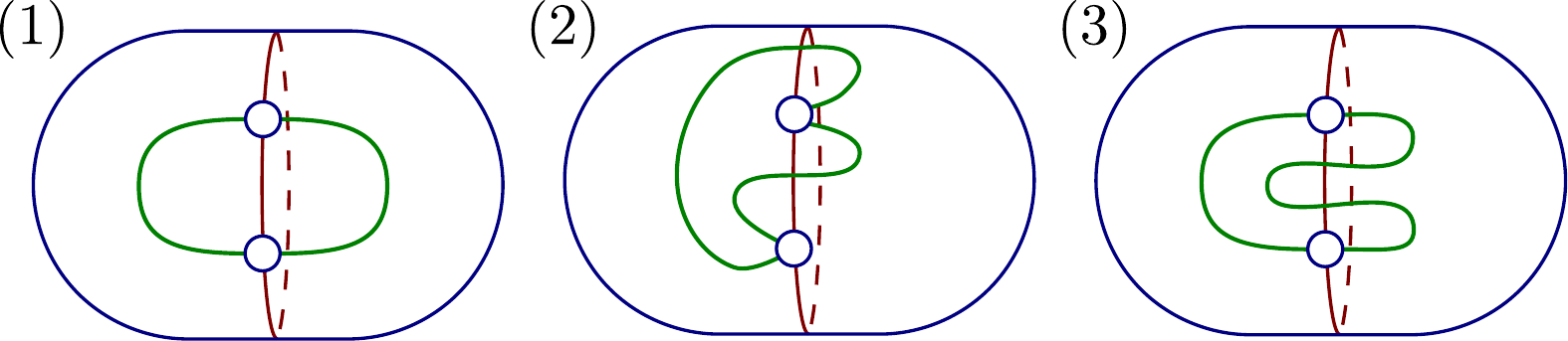}
\caption{The three templates for triples of type IV}
\label{fig:IV templates}
\end{figure}  

\medskip

\noindent \emph{Partial templates 1, 2, and 3.} In each of these cases there is a unique way to complete the partial templates.  The resulting templates are shown in Figure~\ref{fig:IV templates}. 

\medskip

\noindent \emph{Partial templates 4 and 5.} In these cases there is no way to complete the partial templates to templates.  This is because the partial templates indicate that the two $\ep$-arcs in each $R_1$ should be linked along the boundaries of the $R_1$, but this is impossible since $R_1$ is a disk and the arcs must be disjoint.  

\medskip

Since all of the templates found are shown in Figure~\ref{fig:IV templates}, this completes the first phase of the proof.

\medskip

\noindent \emph{Phase 2.} By Lemma~\ref{templates} we may obtain the minimal configurations of triples of type IV by adding handles to the three templates.  

We follow the same steps as in the proof of Proposition~\ref{prop:III} (the classification of type III triples) in order to build the minimal configurations of type IV from the templates.  As in the proofs of Propositions~\ref{prop:II} and~\ref{prop:III}, the added handles must satisfy the minimality condition and the checkerboard condition.  Also, we must add handles so that in the resulting configuration the three curves are pairwise in minimal position and homotopically distinct.

One simplifying feature in the present situation is that the pair $\{\de_0,\ep_0\}$ in the three templates is automatically of type IV.  Indeed, any two curves that intersect in two points and have nonzero algebraic intersection are in minimal position.  Therefore we may skip Step 1 from the proof of Proposition~\ref{prop:III} in each of the three cases.  In other words, there is no need for a type IV condition, analagous to the type II condition and the type II conditions used in the proof of Propositions~\ref{prop:II} and~\ref{prop:III}.

\medskip

\noindent \emph{Template 1.} In this case $\ga_0$ and $\ep_0$ are disjoint and homotopic, so they bound two annuli in $S_1$.  Each annulus is cut into two squares by $\de_0$.  Up to homeomorphism, there are two ways to add handles to the template that satisfy the minimality and checkerboard conditions and that result in homotopically distinct $\ga_0$- and $\ep_0$-curves.  The two resulting configurations are configurations 1 and 2 in Figure~\ref{fig:IV}.

\medskip

\noindent \emph{Template 2.} In this case $\ga_0$ and $\ep_0$ are homotopic in $S_1$ and they intersect in two points.  As such, they bound two bigons.  Up to homeomorphism, there are three ways to add handles in order to obtain a minimal configuration, resulting in configurations 3, 4, and 5 in Figure~\ref{fig:IV}.  

\medskip

\noindent \emph{Template 3.} Again $\ga_0$ and $\ep_0$ are homotopic in $S_1$ and intersect in two points and so they bound two bigons.  Up to homeomorphism, there are three ways to add handles to obtain a minimal configuration, resulting in configurations 6, 7, and 8 in Figure~\ref{fig:IV}. 

\medskip

Since all of the minimal configurations we have constructed are shown in Figure~\ref{fig:IV}, this completes the proof of the proposition.
\end{proof}


\section{A well-suited curve criterion for separating subsurfaces}
\label{sec:gp}

In this section we give our main tool for addressing Case 3 of the proof of Theorem~\ref{main:pa}, as outlined in Section~\ref{sec:int}.  One approach to addressing Case 3 would be to show that the curve graph $\N_f(S_g)$ from Section~\ref{sec:general} is connected and to apply Proposition~\ref{prop:wscc}.  However, the only edges of $\N_f(S_g)$ that we have in hand are the ones in the orbit of the edge $\{c,f(c)\}$.  Since $[c] = [f(c)] \mod 2$ there is no path of such edges connecting $c$ to any other curve in a different mod 2 homology class.

Our solution is to develop an analogue of our well-suited curve criterion for separating curves, Lemma~\ref{wsccsep}.  In that lemma we also have a curve $d$ whose homology class (the trivial one) is preserved.  However, we are able to use the fact that $i(d,f(d)) \leq 2$ in order to find a different curve $a$ that satisfies one of our well-suited curve criteria for nonseparating curves, Lemma~\ref{wsccb}.  We will follow a similar approach here.

Our first goal is to state our generalization of Lemma~\ref{wsccsep}, namely, Lemma~\ref{good pair}.  Then we will state two specific consequences, Lemmas~\ref{gpbp} and~\ref{gp}, that will be applied directly in the proof of Theorem~\ref{main:pa}.

\subsection*{The good pair lemma} Suppose that $D$ is a subsurface of $S_g$, not necessarily connected.  Let $a$ be a curve in $S_g$, and say that $a$ lies in the region $R$ complementary to $D$.  We say that the homology class $[a] \in H_1(S_g;\R)$ is \emph{local} with respect to $D$ if it is not represented by any curve lying in a complementary region to $D$ that is distinct from $R$.  When there is no confusion, we will refer to a homology class as simply being ``local,'' the subsurface $D$ being understood.

\begin{lemma}
\label{good pair}
Let $g \geq 0$ and let $f \in \Mod(S_g)$.  Let $D$ be a separating subsurface of $S_g$.  Suppose that $a$ and $b$ are nonseparating curves in $S_g$ whose homology classes are local with respect to $D$.   If $a$ and $b$ lie in distinct complementary regions of $D$ and lie in the same complementary region of $f(D)$ then the normal closure of $f$ contains the commutator subgroup of $\Mod(S_g)$.  
\end{lemma}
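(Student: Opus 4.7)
My plan is to mimic the proof of the well-suited curve criterion for separating curves (Lemma~\ref{wsccsep}), using the separating subsurface $D$ in place of a separating curve and then reducing to Lemma~\ref{wsccb} for nonseparating curves.

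First, I would use a pigeonhole argument on the pair $\{a,b\}$ to single out one of them for the remainder of the argument. Since $f$ is a homeomorphism, $f(a)$ and $f(b)$ lie in distinct complementary regions of $f(D)$. By hypothesis, $a$ and $b$ both lie in a common complementary region $R$ of $f(D)$, and at most one of $f(a), f(b)$ can also lie in $R$. Therefore at least one of the pairs $\{a,f(a)\}$ or $\{b,f(b)\}$ consists of curves in distinct complementary regions of $f(D)$. Without loss of generality, I would assume $a$ and $f(a)$ lie in distinct complementary regions of $f(D)$; this is the exact analogue of the pigeonhole step in the proof of Lemma~\ref{wsccsep}.

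Next, I would verify the two hypotheses of Lemma~\ref{wsccb} for the curve $a$ and the mapping class $f$. Disjointness $i(a,f(a))=0$ is immediate, since $a$ and $f(a)$ are separated by the subsurface $f(D)$. For the inequality $[a]\neq[f(a)]$, I would invoke the locality hypothesis: because $[a]$ is local with respect to $D$ and $f$ sends the complementary regions of $D$ to those of $f(D)$, the class $[f(a)]=f_{\ast}[a]$ is local with respect to $f(D)$, so any curve representing $[f(a)]$ must lie in the complementary region of $f(D)$ containing $f(a)$. Since $a$ itself is a curve in a different complementary region of $f(D)$, this forces $[a]\neq[f(a)]$. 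Since $a$ is nonseparating by hypothesis, an application of Lemma~\ref{wsccb} then shows that the normal closure of $f$ contains the commutator subgroup of $\Mod(S_g)$.

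The main obstacle is recognizing how to use the locality hypothesis correctly: the input locality is phrased with respect to $D$, whereas the inequality we need compares the classes of curves sitting in complementary regions of $f(D)$. The key observation is that locality transfers under $f$, i.e.\ $[a]$ being local with respect to $D$ implies $[f(a)]$ is local with respect to $f(D)$. Once this transfer is in hand, the rest of the proof is a direct analogue of the separating curve case.
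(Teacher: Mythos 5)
Your proposal is correct and matches the paper's proof essentially step for step: the pigeonhole argument singling out one of $a$, $b$, the observation that $i(a,f(a))=0$ and that locality transfers under $f$ to give $[a]\neq[f(a)]$, and the final application of Lemma~\ref{wsccb}. The only difference is cosmetic --- you spell out the pigeonhole step in slightly more detail than the paper does.
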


When there exists a subsurface $D$ and curves $a$ and $b$ as in the statement of Lemma~\ref{good pair} we say that $a$ and $b$ form a \emph{good pair of curves} for the mapping class $f$ (the subsurface $D$ being understood).  

Lemma~\ref{wsccsep} is in fact a special case of Lemma~\ref{good pair}: the subsurface $D$ there is the annular neighborhood of the separating curve $d$ and it is easy to find the appropriate curves $a$ and $b$ in this case.  Our proof of Lemma~\ref{good pair} is a straightforward generalization of our proof of Lemma~\ref{wsccsep}.

There is a symmetry in the statement of Lemma~\ref{good pair}: if $a$ and $b$ lie in different complementary regions of $D$ but in the same complementary region of $f(D)$ then we may apply the lemma to $f^{-1}$ instead of $f$.  Of course $f$ is a normal generator if and only if $f^{-1}$ is.  

\begin{proof}[Proof of Lemma~\ref{good pair}]

By hypothesis, $a$ and $b$ lie in distinct complementary regions of $D$.  Therefore the curves $f(a)$ and $f(b)$ lie in distinct complementary regions of $f(D)$.  Also by hypothesis, $a$ and $b$ lie in the same complementary region of $f(D)$.  Combining the last two sentences, it must be that either $a$ and $f(a)$ lie in distinct complementary regions of $f(D)$ or $b$ and $f(b)$ lie in distinct complementary regions of $f(D)$ (or both statements hold).  Without loss of generality, suppose the former holds.  Since $[a]$ is local with respect to $D$, it follows that $[f(a)]$ is local with respect to $f(D)$, and in particular that $[a] \neq [f(a)]$.  Clearly $i(a,f(a))=0$.  An application of Lemma~\ref{wsccb} completes the proof.
\end{proof}

The next lemma gives two ways of finding curves whose homology classes are local in the sense of Lemma~\ref{good pair}.  In fact, such curves are easy to find.

\begin{lemma}
\label{local}
Let $g \geq 0$, and let $D$ be a separating subsurface of $S_g$.
\begin{enumerate}
\item If $D$ is connected, then every nonseparating curve in $S_g$ in the complement of $D$ represents a homology class that is local.  
\item Every curve in the complement of $D$ that does not separate the complement of $D$ represents a homology class that is local.
\end{enumerate}
\end{lemma}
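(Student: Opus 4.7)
My plan is to prove both parts by a dual-curve argument against the algebraic intersection form. In each case I will assume for contradiction that there is a curve $a' \subset R_j$ with $[a']=[a]$, where $R_j$ is a complementary region of $D$ distinct from the region $R_i$ containing $a$, and produce a $1$-cycle $b$ satisfying $|\hat\imath|(a,b)=1$ while $|\hat\imath|(a',b)=0$. This violates the equality $|\hat\imath|(a,\cdot)=|\hat\imath|(a',\cdot)$ forced by $[a]=[a']$.

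Part (2) will be immediate from the stronger hypothesis: since $a$ is nonseparating in $R_i$, I can build a simple closed curve $b \subset R_i$ with $|\hat\imath|(a,b)=1$ in the standard way, by cutting $R_i$ along $a$ and joining the two new boundary circles with an embedded arc. Then $b \subset R_i$ and $a' \subset R_j$ are disjoint subsurfaces, so $|\hat\imath|(a',b)=0$, and we are done.

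For part (1), the hypothesis is weaker, so the initial dual curve may wander through other complementary regions; the connectedness of $D$ enters precisely to let me prune it. Choose any simple closed curve $b$ in $S_g$ with $|\hat\imath|(a,b)=1$, placed transverse to $\partial D$. For each arc of $b$ lying in some $R_\ell$ with $\ell \neq i$, replace it by an arc in $D$ sharing the same two endpoints on $\partial D$; such an arc exists because $D$ is path-connected. Call the resulting $1$-cycle $b'$; by construction $b' \subset D \cup R_i$. The difference $b-b'$ is a $1$-cycle supported in $\bigcup_{\ell \neq i}(D \cup R_\ell) = S_g \setminus R_i^\circ$, and since $a$ lies in the interior of $R_i$, we have $|\hat\imath|(a,b')=|\hat\imath|(a,b)=1$. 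On the other hand $b' \subset D \cup R_i$ is disjoint from $a' \subset R_j^\circ$, so $|\hat\imath|(a',b')=0$, giving the contradiction.

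The main delicacy is the arc-replacement step in part (1): after pruning, $b'$ may differ from $b$ precisely where it used to meet $a'$, so one cannot directly compare $|\hat\imath|(a',b)$ with $|\hat\imath|(a',b')$. The resolution, which is the crux of the argument, is to compare $|\hat\imath|(a,b')$ and $|\hat\imath|(a',b')$ using the \emph{same} modified cycle $b'$, and to exploit the fact that after pruning $b'$ avoids the interior of every $R_\ell$ with $\ell \neq i$.
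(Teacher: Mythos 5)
Your proof is correct, and the underlying idea is the same as the paper's: reduce to producing a $1$-cycle $b$ supported in $D \cup R_i$ with $|\hat\imath|(a,b)=1$, then observe that any would-be competitor $a' \subset R_j$ ($j\neq i$) is disjoint from $b$, so $|\hat\imath|(a',b)=0$, contradicting $[a']=[a]$. Part (2) is handled identically in both. Where you differ is in building $b$ for part (1): the paper splits into cases according to whether $a$ separates $R_i$, and in the separating case notes that since $a$ is nonseparating in $S_g$ it must induce a nontrivial partition of $\partial R_i$, then directly threads a simple closed curve $b$ through $D$ (using connectedness of $D$) and back through $R_i$. You instead take an arbitrary dual simple closed curve $b$ in $S_g$ and prune its excursions into other regions, replacing each arc in $R_\ell$ ($\ell\neq i$) by an arc in $D$ with the same endpoints. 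Your pruning argument avoids the case split and is uniform, but produces a $1$-cycle $b'$ that may no longer be a simple closed curve --- which is harmless here since only the algebraic intersection pairing is used. The paper's version has the mild advantage of producing an honest simple closed curve $b$ and of making visible the role of the ``nontrivial partition of boundary components'' fact, but the two routes are logically equivalent and roughly equal in length.
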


\begin{proof}

Let $a$ be a nonseparating curve in $S_g$ that lies in the complementary region $R$ of $D$.  For both statements it suffices to find a curve $b$ so that $i(a,b)=1$ and so that $b$ is contained in the smallest subsurface containing $D \cup R$.  If $a$ is nonseparating in $R$, then the curve $b$ can be taken to lie in $R$.  The second statement follows.  If $a$ is separating in $R$, then since $a$ is nonseparating in $S_g$ it must be that $a$ induces a nontrivial partition of the components of the boundary of $R$.  It then follows that we can find the desired curve $b$ in $D \cup R$.  The first statement follows.
\end{proof}

\subsection*{Applications for iterates of curves}  We now explain how Lemma~\ref{good pair} will be used in our proof of Theorem~\ref{main:pa}.  As mentioned above, we will consider a mapping class $f$ and a curve $c$.  The role of $D$ will be played by a neighborhood of $c \cup f(c)$, so that $f(D)$ corresponds to a neighborhood of $f(c) \cup f^2(c)$.  If $c$ and $f(c)$ form a pair of type I, then $c \cup f(c)$ is a bounding pair and $D$ is not connected.  On the other hand, if $c$ and $f(c)$ form a pair of type II, III, or IV, then $i(c,f(c))$ is nonzero and so $D$ is connected.

As in Section~\ref{sec:configs}, we may cut $S_g$ along $f(c)$ in order to obtain a surface with two boundary components.  The curves $c$ and $f^2(c)$ each correspond to a collection of arcs on the cut surface.  Finding a good pair of curves $a$ and $b$ as in the statement of Lemma~\ref{good pair}, with $D=c \cup f(c)$, then reduces to finding a pair of nonseparating curves $a$ and $b$ on the cut surface so that:
\begin{enumerate}
\item $a$ and $b$ represent homology classes of $S_g$ that are local with respect to $D$,
\item $a$ and $b$ lie on different sides of the $c$-arcs, and
\item  $a$ and $b$ lie on the same side of the $f^2(c)$-arcs
\end{enumerate}
(the term ``sides'' here is perhaps an abuse of terminology; what we precisely mean is that $a$ and $b$ lie in different components of the complement of $c$ in the cut surface and in the same component of the complement of $f^2(c)$ in the cut surface).  If $i(c,f(c))=0$ then by Lemma~\ref{local} the first condition is satisfied whenever $a$ and $b$ are not parallel to a component of the boundary of the cut surface.  Otherwise by Lemma~\ref{local} the first condition is automatically satisfied.

We summarize the above discussion with the following two lemmas.  The first addresses the case $i(c,f(c)) = 0$ and the second addresses the case $i(c,f(c)) \neq 0$.  In both cases the hypotheses of the lemma force the genus $g$ to be at least 3, which is why we can conclude that $f$ is a normal generator and not just that its normal closure contains the commutator subgroup.  At the same time, we could just as well assume here that $g$ is at least 3, since this will be the case when we apply these lemmas in the proof of Theorem~\ref{main:pa}.  

\begin{lemma}
\label{gpbp}
Let $f \in \Mod(S_g)$.  Suppose there is a curve $c$ so that $i(c,f(c))=0$ and so that $[c] = [f(c)]$.  Suppose further there are curves $a$ and $b$ in $S_g$ so that on the surface obtained by cutting $S_g$ along $f(c)$ we have that
\begin{enumerate}
\item $a$ and $b$ lie on different sides of $c$,
\item $a$ and $b$ lie on the same side of $f^2(c)$, and
\item $a$ and $b$ are nonseparating curves in the surface obtained by further cutting along $c$.
\end{enumerate}
Then $a$ and $b$ form a good pair for $f$, and so $f$ is a normal generator for $\Mod(S_g)$.
\end{lemma}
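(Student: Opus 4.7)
The plan is to apply the good pair lemma, Lemma~\ref{good pair}, to produce a good pair of curves $(a,b)$ for $f$ with respect to a suitably chosen separating subsurface. Let $D$ denote a closed regular neighborhood of $c \cup f(c)$. Since $i(c,f(c))=0$ and $[c]=[f(c)]$, the curves $c$ and $f(c)$ are disjoint and (after orienting them compatibly) homologous, so $c\cup f(c)$ bounds a subsurface of $S_g$; in particular $D$ is a separating subsurface, and $f(D)$ is a regular neighborhood of $f(c)\cup f^2(c)$.

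I would then translate each of the three hypotheses into the language required by Lemma~\ref{good pair}. Cutting $S_g$ along $f(c)$ yields a surface whose boundary components come from $f(c)$. Condition (1) says that on this cut surface $c$ separates $a$ from $b$, which is equivalent to the statement that $a$ and $b$ lie in distinct complementary regions of $D$ in $S_g$. Condition (2) similarly translates to the statement that $a$ and $b$ lie in the same complementary region of $f(D)$. For condition (3), cutting further along $c$ gives $S_g$ cut along $c\cup f(c)$, which is essentially $S_g$ with the interior of $D$ removed; the hypothesis that $a$ and $b$ are nonseparating in this cut surface, combined with part (2) of Lemma~\ref{local}, implies that $[a]$ and $[b]$ are local homology classes with respect to $D$. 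It also follows from condition (3) that $a$ and $b$ are nonseparating in $S_g$ itself, because each component of $S_g\setminus D$ has two boundary components and therefore remains joined to the rest of $S_g$ after the removal of a nonseparating curve in that component.

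With these translations in hand, the triple $(a,b,D)$ satisfies all hypotheses of Lemma~\ref{good pair}, so the normal closure of $f$ contains the commutator subgroup of $\Mod(S_g)$. To promote this to the stronger conclusion that $f$ normally generates $\Mod(S_g)$, I would observe that the hypotheses force $g\geq 3$: each of the two components of $S_g$ cut along $c\cup f(c)$ has two boundary components, and for condition (3) each component must contain a nonseparating curve, so each has positive genus; an Euler characteristic count then yields $g\geq 3$. Since $\Mod(S_g)$ is perfect for $g\geq 3$, its commutator subgroup is all of $\Mod(S_g)$, and $f$ is a normal generator. The only nontrivial part of the argument is the careful verification that the cut-surface formulation of conditions (1)--(3) corresponds exactly to the complementary-region formulation demanded by Lemma~\ref{good pair}; once this bookkeeping is in place, the result is an immediate application of machinery already developed.
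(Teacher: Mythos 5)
Your proof is correct and takes essentially the same approach as the paper: the paper introduces Lemma~\ref{gpbp} as a ``summary'' of the discussion immediately preceding it, which sets up $D$ as a regular neighborhood of $c\cup f(c)$, translates the cut-surface conditions into the complementary-region language of Lemma~\ref{good pair} via Lemma~\ref{local}, and notes that the hypotheses force $g\geq 3$ so that perfectness of $\Mod(S_g)$ upgrades the conclusion. Your Euler characteristic computation $g=g_1+g_2+1\geq 3$ spells out the genus bound that the paper asserts without proof, and your use of Lemma~\ref{local}(2) is exactly what the paper intends for the disconnected $D$ in this type~I case.
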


\begin{lemma}
\label{gp}
Let $f \in \Mod(S_g)$.  Suppose there is a curve $c$ and nonseparating curves $a$ and $b$ in $S_g$ so that $i(c,f(c)) > 0$ and so that on the surface obtained by cutting $S_g$ along $f(c)$ we have that 
\begin{enumerate}
\item $a$ and $b$ lie on different sides of $c$ and
\item $a$ and $b$ lie on the same side of $f^2(c)$.
\end{enumerate}
Then $a$ and $b$ form a good pair for $f$, and so $f$ is a normal generator for $\Mod(S_g)$.
\end{lemma}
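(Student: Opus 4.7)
The plan is to deduce Lemma~\ref{gp} from the good pair criterion, Lemma~\ref{good pair}, applied to the separating subsurface $D$ obtained as a regular neighborhood of $c \cup f(c)$. Since $i(c,f(c)) > 0$, the set $c \cup f(c)$ is connected, so $D$ is a single connected subsurface of $S_g$; then $f(D)$ is a regular neighborhood of $f(c) \cup f^2(c)$.

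Next I would translate the hypotheses of Lemma~\ref{gp} into the language of Lemma~\ref{good pair}. Two curves disjoint from $D$ lie in the same complementary component of $D$ in $S_g$ exactly when they lie in the same component of $S_g \setminus (c \cup f(c))$, and this translates directly to lying on the same side of $c$ in the surface obtained by cutting $S_g$ along $f(c)$. Thus hypothesis (1) of Lemma~\ref{gp} says precisely that $a$ and $b$ lie in distinct complementary components of $D$, and applying the identical argument with $f(D)$ in place of $D$, hypothesis (2) says that $a$ and $b$ lie in the same complementary component of $f(D)$. In particular $D$ is separating. Since $D$ is connected and $a$, $b$ are nonseparating curves of $S_g$ in the complement of $D$, part (1) of Lemma~\ref{local} shows that $[a]$ and $[b]$ are local with respect to $D$.

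Applying Lemma~\ref{good pair} will then yield that the normal closure of $f$ contains the commutator subgroup of $\Mod(S_g)$. To upgrade this to the assertion that $f$ normally generates $\Mod(S_g)$, I will observe that the hypotheses force $g \geq 3$: the complement of the connected subsurface $D$ has at least two components, each containing a nonseparating curve of $S_g$, and this combinatorial requirement forces sufficient total genus. For $g \geq 3$ the group $\Mod(S_g)$ is perfect, so its commutator subgroup is the whole group and the conclusion follows. I do not expect a substantive obstacle here; the argument is essentially a bookkeeping exercise that packages the given hypotheses on $a$ and $b$ into the good pair framework already established in Lemmas~\ref{good pair} and~\ref{local}.
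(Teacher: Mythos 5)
Your proof is correct and is essentially the same argument the paper sketches in the discussion immediately preceding Lemma~\ref{gp}: take $D$ to be a regular neighborhood of $c \cup f(c)$, observe $D$ is connected because $i(c,f(c)) > 0$, translate the ``sides'' hypotheses on the cut surface $S_g \cut f(c)$ into conditions about the complementary regions of $D$ and $f(D)$, invoke Lemma~\ref{local}(1) for the locality of $[a]$ and $[b]$, apply Lemma~\ref{good pair}, and use perfectness of $\Mod(S_g)$ for $g \geq 3$. The one place you are a bit breezy is the justification that the hypotheses force $g \geq 3$, but the paper is equally terse on this point (and even remarks that one could just add $g \geq 3$ as a hypothesis, since that is all that is needed in the application), so this does not amount to a gap.
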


In Figure~\ref{fig:good pair examples} we give two examples of good pairs as in Lemma~\ref{gp}.

\begin{figure}
\centering
\includegraphics[scale=.3]{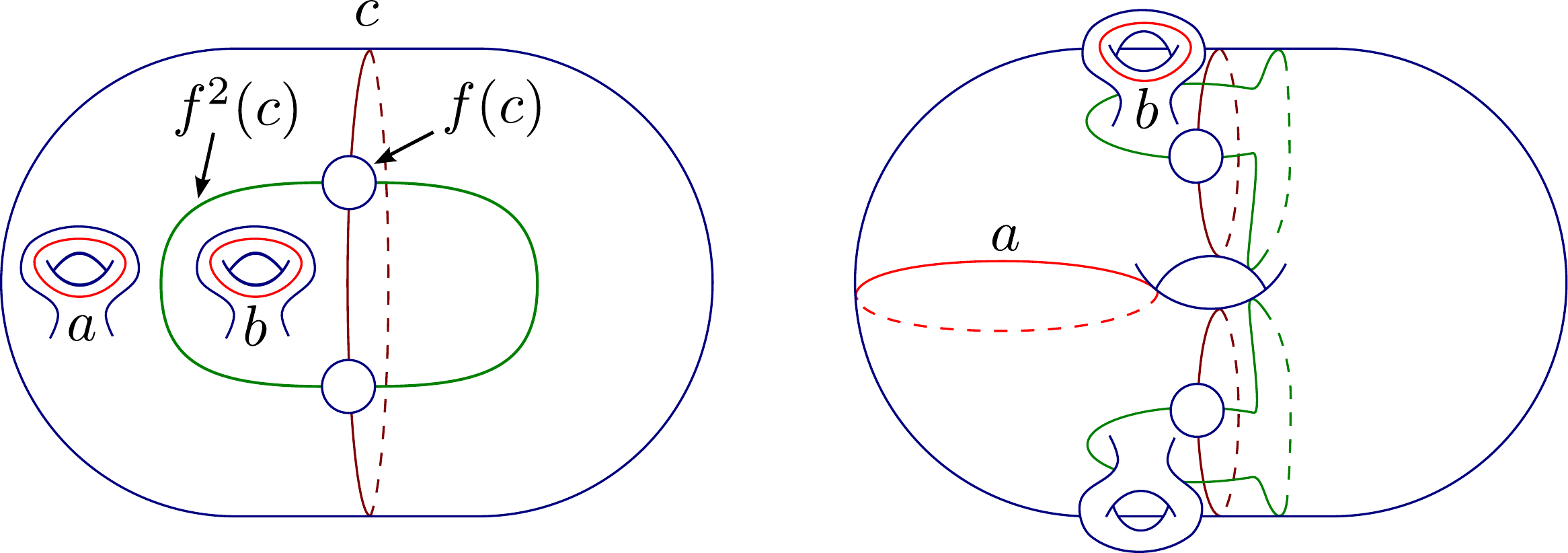}
\caption{In each case the curves $a$ and $b$ form a pair of good curves for $f$, with the role of $D$ being played by a neighborhood of $c \cup f(c)$}
\label{fig:good pair examples}
\end{figure}


\section{Application: pseudo-Anosov mapping classes with small stretch factor}
\label{sec:pA}

In this section we use the tools developed in Sections~\ref{sec:int}--\ref{sec:gp} to prove Theorem~\ref{main:pa}, which states that a pseudo-Anosov element of $\Mod(S_g)$ with stretch factor less than $\sqrt{2}$ is a normal generator for $\Mod(S_g)$.  

In Section~\ref{sec:configs} we introduced terminology for various types of pairs and ordered triples of curves, namely, types I, II, III, and IV.  In this section we say that a pair or a triple of (homotopy classes of) curves is of type I, II, III, or IV if there are representatives of the curves that form a configuration of that type.  The representatives chosen do not affect the designation into types. 

\begin{proof}[Proof of Theorem~\ref{main:pa}]

As discussed in the introduction, we may assume that $g \geq 3$ since there does not exist a pseudo-Anosov mapping class in $\Mod(S_g)$ with stretch factor less than $\sqrt{2}$ when $g < 3$.  Let $f \in \Mod(S_g)$ be a pseudo-Anosov mapping class with stretch factor less than $\sqrt{2}$.  We would like to show that $f$ is a normal generator for $\Mod(S_g)$.  We will apply the various well-suited curve criteria we have developed.  Some of these lemmas only imply that the normal closure of $f$ contains the commutator subgroup of $f$, but since $g \geq 3$ this implies that $f$ is a normal generator.

We follow the plan we established in Section~\ref{sec:int}.  Let $c$ be a shortest curve in $S_g$ with respect to some $f$-metric.  Since the stretch factor of $f$ is less than $\sqrt{2} < 3/2$ it follows from Proposition~\ref{flm} that $i(c,f(c)) \leq 2$.  If $c$ is separating then $f$ is a normal generator by our well-suited curve criterion for separating curves, Lemma~\ref{wsccsep}.  We may thus assume for the remainder of the proof that $c$ is nonseparating.  

Suppose first that $[c] \neq [f(c)] \mod 2$.  There are three possibilities for $i(c,f(c))$: 0, 1, or 2.  Applying Lemmas~\ref{wsccb}, \ref{wscca}, and \ref{lemma:nonsep lantern} to the three cases, respectively, we conclude that $f$ is a normal generator for $\Mod(S_g)$.

We may now assume for the remainder of the proof that $[c] = [f(c)] \mod 2$.  This is Case 3 from Section~\ref{sec:int}.  We have that $i(c,f(c)) = i(f(c),f^2(c))$ and $i(c,f^2(c))$ are both equal to 0 or 2.  Again, this follows from the $(k,n)=(2,4)$ case of Proposition~\ref{flm} plus the fact that $i(c,f^2(c))$ is even (since $f$, hence $f^2$ preserves $[c] \mod 2$).

It follows from the previous paragraph that $\{c,f(c)\}$, $\{f(c),f^2(c)\}$, and $\{c,f^2(c)\}$ are each of type I, II, III, or IV.  Since $\{c,f(c)\}$ and $\{f(c),f^2(c)\}$ are of the same type (they differ by $f$),  and since $c$, $f(c)$, and $f^2(c)$ are all distinct ($f$ is pseudo-Anosov), the ordered triple $(c,f(c),f^2(c))$ is of type I, II, III, or IV.  We treat the four possibilities in turn.

\medskip

\noindent \emph{Type I.} By Proposition~\ref{prop:I}, the configuration $(c,f(c),f^2(c))$ is a stabilization of one of the two configurations in Figure~\ref{fig:I}.  In the first configuration there is a good pair of curves as in Lemma~\ref{gpbp}, and so $f$ is a normal generator.

For the second configuration, there may not be a good pair of curves.  Consider then the quadruple of curves $(c,f(c),f^2(c),f^3(c))$.  Since $\sqrt{2} < \sqrt[3]{3}$, the stretch factor of $f$ is less than $\sqrt[3]{3}$ and so by the $(k,n)=(3,6)$ case Proposition~\ref{flm} and the fact that $[c] = [f^3(c)] \mod 2$ we have $i(c,f^3(c)) \leq 4$.  

The curves $c$ and $f^2(c)$ form a pair of type II.  As such, there are two separating curves $d_1$ and $d_2$ obtained as a boundary component of a neighborhood of $c \cup f^2(c)$.  We will show that at least one of $i(d_1,f(d_1))$ or $i(d_2,f(d_2))$ is at most 2.  It will then follow from Lemma~\ref{wsccsep} that $f$ is a normal generator.

The curves $c$ and $f^2(c)$ cut each other into two arcs each, and we may think of $d_1$ and $d_2$ as being formed from these four arcs.  Two of the arcs make up $d_1$ and the other two make up $d_2$.  Similarly, $f(d_1)$ and $f(d_2)$ are made of the four corresponding arcs of $f(c)$ and $f^3(c)$.  We may therefore understand how $f(d_1)$ and $f(d_2)$ intersect $d_1$ and $d_2$ by understanding how $f(c)$ and $f^3(c)$ intersect $c$ and $f^2(c)$.  Specifically,
\begin{align*}
i(f(d_1),&d_1) + i(f(d_1),d_2) + i(f(d_2),d_1) + i(f(d_2),d_2) \\ &\leq  i(c,f(c)) + i(f^2(c),f(c)) + i(c,f^3(c)) + i(f^2(c),f^3(c)).
\end{align*}
Let us examine the right-hand side.  By assumption, \[i(c,f(c)) = i(f(c),f^2(c)) = i(f^2(c),f^3(c)) = 0.\]  We already said that $i(c,f^3(c)) \leq 4$ and so the left-hand side is at most 4.  Thus $i(f(d_i),d_i) < 4$ for some $i$.   Since $d_i$ is separating it must be that $i(f(d_i),d_i) \leq 2$ as desired.

\medskip

\noindent \emph{Type II.} By Proposition~\ref{prop:I}, the configuration $(c,f(c),f^2(c))$ is a stabilization of one of the ten configurations in Figure~\ref{fig:II}.  For configurations 1--5 there is a good pair of curves as in Lemma~\ref{gp}, and so $f$ is a normal generator.  For configurations 6--10, we consider either of the two separating curves obtained as a component of the boundary of  a neighborhood of $c \cup f(c)$ (these are $\ga \cup \de$ in the pictures); call it $d$.  Its image is one of the two separating curves $e_1$ and $e_2$ obtained as a component of the boundary of $f(c) \cup f^2(c)$ (these are $\de \cup \ep$ in the pictures).  Each of $i(d,e_1)$ and $i(d,e_2)$ is either 0 or 2.  It thus follows from Lemma~\ref{wsccsep} that $f$ is a normal generator.  
(The argument for configurations 6--10 also applies to configurations 1--5.)

\medskip

\noindent \emph{Type III.} By Proposition~\ref{prop:III}, the configuration $(c,f(c),f^2(c))$ is a stabilization of one of the  sixteen configurations in Figure~\ref{fig:III}. There are seven pictures in the figure.  Some of the pictures correspond to more than one minimal configuration, but our arguments will apply uniformly to all the different minimal configurations represented by the same picture.

In pictures 1--5 we find a good pair of curves and so by Lemma~\ref{gp} we conclude that $f$ is a normal generator.  For the configurations depicted in pictures 6 and 7, consider the four curves lying on the boundary of a neighborhood of (representatives of) $c$ and $f(c)$ (these are $\gamma$ and $\delta$ in the pictures).    These four curves must map under $f$ to the four curves lying on the boundary of a neighborhood of $f(c)$ and $f^2(c)$ (these are $\de$ and $\ep$ in the pictures).  In each of the pictures, one of the latter four curves is the curve $d$ surrounding one of the black or gray dots. (Note that since $d$ is the image of a nonseparating curve it must be itself nonseparating; hence the configuration where the gray dot represents a handle connecting the region to itself does not arise here.)  The curve $d$ is disjoint from the first four curves and not homologous to any of them (we can find a curve that intersects each of the first four curves in a single point and does not intersect $d$).  By Lemma~\ref{wsccb}, we have that $f$ is a normal generator in each case.

\medskip

\noindent \emph{Type IV.} By Proposition~\ref{prop:IV}, the configuration $(c,f(c),f^2(c))$ is a stabilization of one of the eight configurations in Figure~\ref{fig:IV}. For configurations 1 and 2 there is a good pair of curves and so by Lemma~\ref{gp} we have that $f$ is a normal generator.  

\begin{figure}[h]
\centering
\includegraphics[scale=.32]{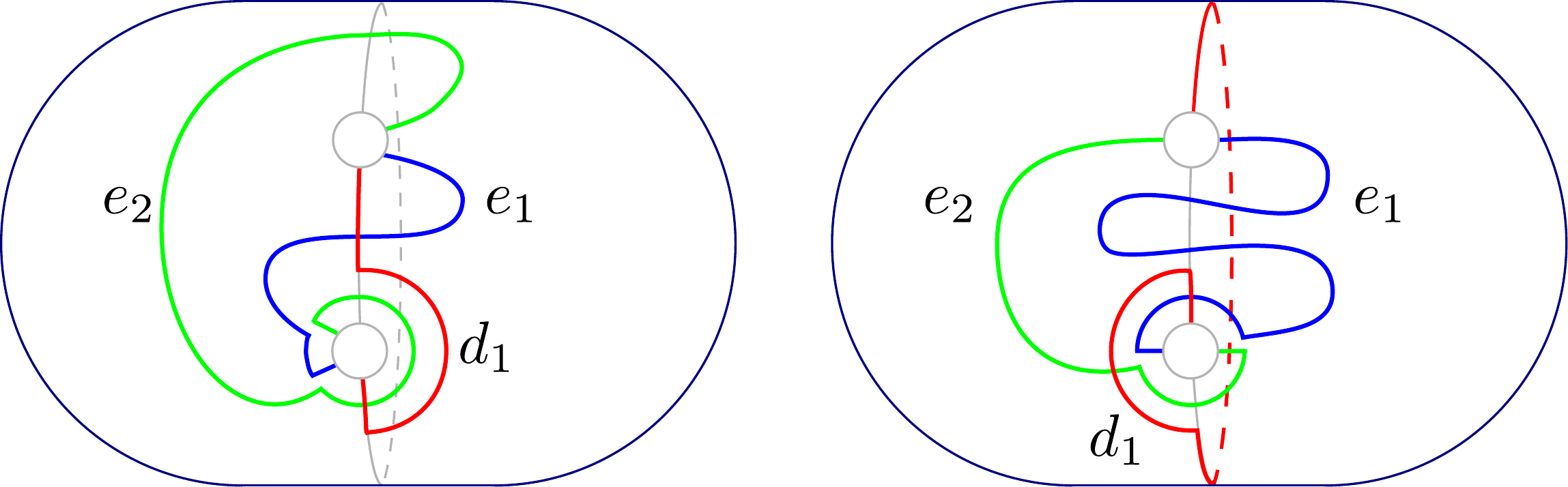}
\caption{The curves $d_1$, $e_1$ and $e_2$ for configurations 3--5 (left) and 6--8 (right)}
\label{fig:s}
\end{figure}

Next consider configurations 3--5.  The curves $c$ and $f(c)$ ($\ga$ and $\de$ in the picture) intersect in two points, and hence each cuts the other into two arcs.  There are four curves in the surface comprised of one of the two arcs of $c$ and one of the two arcs of $f(c)$.  Two of these four curves make a left turn onto $f(c)$ from $c$, call them $d_1$ and $d_2$; see the left-hand side of Figure~\ref{fig:s}.  We can similarly make two curves $e_1$ and $e_2$ in $f(c) \cup f^2(c)$ that turn left onto $f^2(c)$.  The former pair of curves must map to the latter.  Since $i(d_1,e_1) = i(d_1,e_2) = 1$ and since $f(d_1)$ is either $e_1$ or $e_2$, it follows from Lemma~\ref{wscca} that $f$ is a normal generator.

For configurations 6--8 the argument works in the same way.  The analogous curves $d_1$, $e_1$, and $e_2$ are shown in the right-hand side of Figure~\ref{fig:s}.  Thus $f$ is a normal generator in this case as well.   This completes the proof of the theorem.
\end{proof}

\bibliographystyle{plain}
\bibliography{normal}

\end{document}